\DeclareMathOperator{\cof}{\cof}
\theoremstyle{plain}
\theoremstyle{plain}
\newtheorem{theorem}{Theorem}
\newtheorem{lemma}{Lemma}
\newtheorem{proposition}{Proposition}
\theoremstyle{definition}
\numberwithin{equation}{section}
\numberwithin{lemma}{section}
\numberwithin{proposition}{section}
\theoremstyle{remark}
\newtheorem*{remark}{Remark}
\begin{document}

\title[Quadratic Uniformity]{Improved Quadratic Gowers Uniformity for the M\"obius Function}
\author{James Leng}
\address[James Leng]{Department of Mathematics\\
     UCLA \\ Los Angeles, CA 90095, USA.}
\email{jamesleng@math.ucla.edu}
\maketitle

\begin{abstract}
We demonstrate that 
$$\|\mu\|_{U^3([N])} \ll_{A}^{\text{ineff}} \log^{-A}(N)$$
$$\|\Lambda - \Lambda_Q\|_{U^3([N])} \ll_{A}^{\text{ineff}} \log^{-A}(N)$$
for any $A > 0$ where $\Lambda_Q$ is an approximant to the von Mangoldt function and will be defined below, improving upon a bound of Tao-Ter\"av\"ainen (2021). As a consequence, among other things, we have the following:
$$\mathbb{E}_{x, y \in [N], x + 3y \in [N]} \Lambda(x)\Lambda(x + y)\Lambda(x + 2y)\Lambda(x + 3y) = \mathfrak{S} + O_A(\log^{-A}(N))$$
where $\mathfrak{S}$ is the singular series for the configuration $(x, x + y, x + 2y, x + 3y)$. In fact, we show that
$$\|\mu - \mu_{Siegel}\|_{U^3([N])} \ll \exp(-O(\log^{1/C}(N)))$$
$$\|\Lambda - \Lambda_{Siegel}\|_{U^3([N])} \ll \exp(-O(\log^{1/C}(N)))$$
where $\mu_{Siegel}$ and $\Lambda_{Siegel}$ are approximants of $\mu$, and $\Lambda$, respectively, representing the Siegel zero contribution of $\mu$ and are defined in the above article. To do so, we use an improvement of the $U^3$ inverse theorem due to Sanders and we follow the approach of Green and Tao (2007), opting to use the ``old-fashioned" approach to equidistribution on two-step nilmanifolds which was also considered by Green and Tao (2017), and by Gowers and Wolf (2010). To the author's knowledge, this is the first time that quadratic Fourier analysis over $\mathbb{Z}/N\mathbb{Z}$ has achieved quasi-polynomial type bounds in applications. 
\end{abstract}

\section{Introduction}
In 2010, Green and Tao \cite{GT1} (with the contribution of work from Green-Tao \cite{GT2} and Green-Tao-Ziegler \cite{GTZ1}) generalized the argument for Vinogradov's sum of three primes theorem for arbitrary linear systems, thus firmly establishing the field of \textit{higher order Fourier analysis} as a generalization of the Hardy-Littlewood circle method. Prior to the work of Manners \cite{M1} and Tao-Ter\"av\"ainen \cite{TT}, the error terms for the asymptotics obtained in \cite{GT1} were ineffective. Two new ingredients were needed to make the bounds effective:
\begin{itemize}
    \item[1.] The first was a quantitative $U^{s + 1}$ inverse theorem, obtained by Manners \cite{M1} in 2018.
    \item[2.] The second was a way to bypass Siegel's theorem before applying the $U^{s + 1}$ inverse theorem, achieved by Tao-Ter\"av\"ainen in 2021 by finding a suitable approximant to the M\"obius function $\mu_{Siegel}$ and estimating the Gowers norm of $\mu - \mu_{Siegel}$. This is important because the loss of powers of logarithms whilst using Vaughan-type decompositions and the fact that the dimension of the nilmanifold is unbounded meant that in order to obtain quantitative bounds, one needs to prove bounds strictly better than strongly-logarithmic, which Siegel's theorem doesn't allow.
\end{itemize}
Using these new tools, Tao and Ter\"av\"ainen \cite{TT} were able to prove the following:
\begin{theorem}\thlabel{TT1}
If $\mu$ and $\Lambda$ are the M\"obius function and von Mangoldt function, respectively, and $\mu_{Siegel}$ and $\Lambda_{Siegel}$ is as in \cite{TT}, $\Lambda_Q$ is as defined below in Section 2 and $N \ge 10$, we have for each $A > 0$
\begin{itemize}
\item[1.] $\|\mu\|_{U^s([N])}, \|\Lambda - \Lambda_{Q}\|_{U^s([N])} \ll (\log\log(N))^{-c}$
\item[2.] $\|\mu_{Siegel}\|_{U^s([N])}, \|\Lambda_{Siegel} - \Lambda_Q\|_{U^s([N])} \ll q_{Siegel}^{-c} \ll_{A}^{\text{ineff}} \log^{-A}(N)$
\item[3.] $\|\mu - \mu_{Siegel}\|_{U^s([N])}, \|\Lambda - \Lambda_{Siegel}\|_{U^s([N])} \ll (\log\log(N))^{-c}$
\item[4.] $\|\mu - \mu_{Siegel}\|_{U^2([N])}, \|\Lambda - \Lambda_{Siegel}\|_{U^2([N])} \ll \exp(-O(\log^{1/C}(N)))$.
\end{itemize}
Here, $[N] = \{1, \dots, N\}$, $q_{Siegel}$ is the conductor for the character $\chi_{Siegel}$ for the Siegel zero $\beta$, $\ll$ is Vinogradov's notation, $U^s([N])$ is the $U^s$ Gowers norm. (These quantities will be specified in Section 2).
\end{theorem}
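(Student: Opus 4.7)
The plan is to reduce each of the four estimates to an exponential sum of the form $|\sum_{n \le N} f(n)\,\overline{F(g(n)\Gamma)}|$, where $f$ is one of $\mu$, $\Lambda-\Lambda_Q$, $\mu_{Siegel}$, $\Lambda_{Siegel}-\Lambda_Q$, or a difference, and $F(g(n)\Gamma)$ is a Lipschitz polynomial nilsequence on some $(s-1)$-step nilmanifold $G/\Gamma$. For parts (1) and (3) this reduction is furnished by Manners' quantitative $U^s$ inverse theorem \cite{M1}, which converts a $U^s$-norm lower bound $\delta$ into correlation with a nilsequence whose complexity is bounded by a tower in $\delta^{-1}$; for part (4) the reduction is simply Plancherel, yielding a uniform Fourier coefficient bound for $\mu-\mu_{Siegel}$. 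Part (2) I would then dispatch by direct computation: since $\mu_{Siegel}$ and $\Lambda_{Siegel}-\Lambda_Q$ are explicit Dirichlet convolutions of the exceptional character $\chi_{Siegel}$ with smooth cutoffs supported at scale $q_{Siegel}$, expanding them, swapping sums, and invoking nilmanifold equidistribution along progressions of modulus dividing $q_{Siegel}$ yields the required $q_{Siegel}^{-c}$ factor coming from the character.

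For the main input, part (3), I would follow the Green--Tao blueprint from \cite{GT1} and apply a Heath-Brown (or Vaughan) identity to decompose $\mu-\mu_{Siegel}$ into Type~I and Type~II pieces. Type~I sums I would handle by combining the Green--Tao factorisation/equidistribution theorem for polynomial sequences on nilmanifolds with the Siegel--Walfisz theorem on arithmetic progressions of bounded modulus; the key role of the subtraction of $\mu_{Siegel}$ is to cancel the would-be contribution of an exceptional character, so that the analysis avoids Siegel's theorem altogether and yields a genuine power-of-$\log N$ saving on Lipschitz data of bounded complexity. Type~II sums I would treat by a Cauchy--Schwarz reduction followed by the same factorisation theorem applied to the resulting bilinear nilsequence correlation. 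Summing the Type~I and Type~II contributions and pushing the savings backwards through the tower loss in Manners' theorem converts the polynomial-in-$\log N$ gain at the nilsequence level into the claimed $(\log\log N)^{-c}$ gain at the $U^s$ level. Part (1) then follows from parts (2) and (3) by the triangle inequality $\|\mu\|_{U^s} \le \|\mu-\mu_{Siegel}\|_{U^s}+\|\mu_{Siegel}\|_{U^s}$, and identically for the von Mangoldt version with $\Lambda_Q$ as intermediary.

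For part (4), the reduction leaves a linear exponential sum $\sup_\alpha |\sum_{n \le N}(\mu-\mu_{Siegel})(n)\,e(\alpha n)|$, which I would bound by a standard major/minor arc analysis: Vinogradov's bilinear method on the minor arcs, and identification of $\mu_{Siegel}$ with precisely the major-arc mass that resists unconditional savings, together producing the quasipolynomial gain $\exp(-c\log^{1/C} N)$. The principal obstacle in the whole scheme is the Type~II step in part (3): the Cauchy--Schwarz manoeuvre forces one to understand equidistribution of an auxiliary ``derivative'' nilsequence on a nilmanifold whose dimension grows with the complexity produced by the inverse theorem, so each per-step saving must survive this dimension blowup without collapsing to nothing. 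Reconciling this with the tower-type complexity of Manners' theorem is the delicate point, and is precisely what restricts the eventual bound in (1) and (3) to double-logarithmic savings; it is also this exact obstruction that the present paper sidesteps in the $U^3$ case by replacing Manners' inverse theorem with Sanders' sharper $U^3$ inverse theorem and by reverting to the ``old-fashioned'' equidistribution on two-step nilmanifolds.
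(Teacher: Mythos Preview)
This theorem is not proved in the present paper; it is quoted from Tao--Ter\"av\"ainen \cite{TT} as background (see the sentence immediately preceding the statement: ``Using these new tools, Tao and Ter\"av\"ainen \cite{TT} were able to prove the following''). There is therefore no proof in the paper to compare your proposal against.

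That said, your sketch is a reasonable outline of the method in \cite{TT}: the combination of Manners' quantitative inverse theorem, a Vaughan/Heath-Brown decomposition, Green--Tao nilmanifold equidistribution for the Type~I/II sums, and the Siegel-corrected approximants $\mu_{Siegel}, \Lambda_{Siegel}$ to bypass Siegel's theorem is indeed the structure of \cite{TT}. One point worth tightening: in \cite{TT} the order of logic is that part (3) is the primary estimate and part (1) follows by triangle inequality from (2)+(3), as you say; but your description of part (2) as coming from ``nilmanifold equidistribution along progressions of modulus dividing $q_{Siegel}$'' is not quite the mechanism --- the $q_{Siegel}^{-c}$ saving there comes rather from the fact that $\chi_{Siegel}$ has conductor $q_{Siegel}$ and hence has small $U^s$ norm by a direct Fourier-type computation (the nilsequence machinery is not invoked for part (2)). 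Also, the bound in (3) is $(\log\log N)^{-c}$ precisely because Manners' inverse theorem has doubly-exponential rather than tower-type losses, so ``tower'' in your last paragraph overstates the loss.
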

\begin{remark}
If there is no Siegel zero, then the $\mu_{Siegel} = 0$, $\Lambda_{Siegel} = \Lambda_Q$, and $q_{Siegel}$ disappears.
\end{remark}
We shall prove the following:
\begin{theorem}\thlabel{maintheorem}
There exists a large constant $C \gg 1$ such that for all sufficiently large $N$,
$$\|\Lambda - \Lambda_{Siegel}\|_{U^3([N])} \ll \exp(-O(\log^{1/C}(N)))$$
$$\|\mu - \mu_{Siegel}\|_{U^3([N])} \ll \exp(-O(\log^{1/C}(N))).$$
\end{theorem}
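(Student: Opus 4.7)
The strategy is to follow the template of Green--Tao \cite{GT2} for quadratic uniformity of $\mu$, but with two substitutions: Sanders' quasi-polynomial $U^3$ inverse theorem in place of the original inverse theorem, and the Siegel approximants $\mu_{Siegel}, \Lambda_{Siegel}$ of \cite{TT} in place of $\mu, \Lambda$ directly, so that major-arc obstructions from a putative Siegel zero are absorbed on the left-hand side. Throughout, we set $\eta = \exp(-\log^{1/C}(N))$ for $C$ sufficiently large and argue by contradiction from $\|\mu - \mu_{Siegel}\|_{U^3([N])} \geq \eta$.

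\textbf{Step 1 (Inverse theorem).} By Sanders' improved $U^3$ inverse theorem, there exists a 2-step nilsequence $n \mapsto F(g(n)\Gamma)$ on a nilmanifold of complexity $M \ll \exp(\log^{O(1)}(1/\eta))$ such that the correlation $|\bbe_{n \in [N]} (\mu - \mu_{Siegel})(n)\overline{F(g(n)\Gamma)}| \geq \eta/2$. The quasi-polynomial nature of Sanders' bound is crucial: it implies $M \ll \log^{O(1)}(N)$, which is the budget we need for the rest of the argument.

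\textbf{Step 2 (Old-fashioned reduction to quadratic phases).} Rather than invoke the Green--Tao factorisation theorem for polynomial nilsequences, which loses a power of log in unbounded dimension, I would follow the ``old-fashioned'' route of Green--Tao \cite{GT2} (also used in Gowers--Wolf and the later Green--Tao paper on true complexity). Decomposing $[N]$ into a structured family of arithmetic progressions, on each progression the nilsequence can be replaced by a product of a bracket-linear phase and a genuine quadratic phase $e(\alpha n^2 + \beta n)$, with polynomial dependence of the parameters on $M$. The point is that this keeps the analysis in two ``dimensions'' ($\alpha, \beta \in \RR/\ZZ$) throughout, avoiding an unbounded-dimension equidistribution theorem.

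\textbf{Step 3 (Major/minor arc dichotomy).} For each quadratic phase that arises, apply Weyl's inequality to obtain the dichotomy: either (i) $\alpha$ is strongly irrational, in which case a quadratic Weyl sum estimate gives cancellation at the level $\exp(-O(\log^{1/C}(N)))$, or (ii) $\alpha = a/q + O(1/qN)$ for some $q \ll \exp(\log^{1/C}(N))$, which is the major-arc case.

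\textbf{Step 4 (Minor arcs).} Expand $\mu$ via a Heath-Brown identity into Type I and Type II bilinear sums. The Type I sums are handled by direct summation against the smooth quadratic phase using partial summation; the Type II sums are attacked via Cauchy--Schwarz and van der Corput to reduce to bounding an inner quadratic Weyl sum on a shorter interval. The irrationality of $\alpha$ from Step 3(i) drives the bound below $\eta$, contradicting the correlation.

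\textbf{Step 5 (Major arcs).} In case (ii), after completing the quadratic exponential into a linear combination of Gauss-sum-weighted Dirichlet characters modulo $q$, the correlation reduces to $\sum_{n \leq N} \mu(n)\chi(n) n^{it}$ for characters $\chi \bmod q$. By the standard zero-free region for $L(s,\chi)$, these decay at rate $\exp(-c\log^{1/2}(N))$ except for the contribution of a possible Siegel character $\chi_{Siegel}$. That contribution is precisely the one captured by $\mu_{Siegel}$ in \cite{TT}, so subtracting $\mu_{Siegel}$ cancels it and leaves only quantitatively negligible terms. The von Mangoldt case is treated identically with Vaughan's identity in place of Heath-Brown, and noting that $\Lambda_{Siegel}$ is defined to capture the same major-arc contribution.

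\textbf{Main obstacle.} The technical heart of the argument is Step 2: pushing Sanders' inverse theorem through to a quasi-polynomial reduction to classical quadratic phases on progressions. Any application of the usual polynomial-sequence factorisation theorem at this stage would lose a power of log due to unbounded dimension and would destroy the target bound. Executing the old-fashioned approach quantitatively, while tracking the nilmanifold complexity $M$ through each step, is where the bulk of the work will lie; the subsequent Type I/II and character-sum estimates in Steps 4--5 are more routine once the correct quadratic-phase input is in hand.
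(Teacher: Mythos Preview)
Your Step 2 rests on a misconception. Sanders' inverse theorem does not hand you a nilsequence that can be decomposed into arithmetic progressions carrying genuine one-variable quadratics $e(\alpha n^2+\beta n)$; its output (see \thref{sandersinversetheorem}) is a \emph{locally quadratic form} $\phi$ on a Bohr set $B(S,\rho)$ with $|S|$ of order $\log^{O(1)}(1/\eta)$, and the corresponding nilmanifold has dimension $\asymp|S|$, not $2$. On any single arithmetic progression inside $B$ one does recover a quadratic polynomial, but the Type~II structure involves products $\ell m$, and $\phi(\ell m)$ is not a polynomial in either variable; passing to GAP coordinates destroys the multiplicative structure, as the paper points out explicitly in Section~1.1. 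The actual route (following \cite{GT4}) is coordinate-free: one Fourier-approximates the smooth Bohr cutoff $\psi$ at cost $\delta^{-O(|S|)^{O(1)}}$, then uses the algebraic identities $\phi(dn+daq)=\tfrac{q(q-1)}{2}\phi''(ad,ad)+\alpha q+\beta$ (\thref{philemma}) for Type~I and $\partial^1_{u_1,u_2}\partial^2_{v_1,v_2}\phi((\ell+ua)(m+vb))=2u_1u_2v_1v_2\phi''(ab,ab)$ (\thref{quartic}) for Type~II to manufacture genuine polynomial phases in auxiliary variables $q$ or $(u_i,v_j)$, to which Weyl-type estimates apply.

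Even granting the correct framework, your Steps 3--5 omit the second double-exponential loss that is the paper's main technical contribution. The Type~I/II analysis only shows $\|\phi''(x,x)\|_{Q,\RR/\ZZ}$ is small for $x$ ranging over a union $\bigcup_{b\in\mathcal D}B_b$ of divisibility-constrained sub-Bohr-sets, and this must be propagated to all of a shrunken Bohr set before one can conclude that $e(\phi)$ has low Fourier complexity. The argument in \cite[Proposition~11.1]{GT4} for this step uses $O(|S|)$-th divisor moments and loses $(\log N)^{2^{O(|S|)}}$, which is fatal here. The paper fixes this by arranging the Type~II case so that $\mathcal D$ is an interval and then restricting to the \emph{primes} in $\mathcal D$, so that the relevant second moment becomes the prime-divisor count $\ll\log^2 N$ (\thref{Bohr1}); in the Type~I/twisted Type~I case $\mathcal D$ has no such description, so instead the Type~I range $R$ is taken quasi-polynomially small and the intermediate range is converted into a Type~II sum. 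The endgame is not a reduction to character sums as in your Step~5 but a proof that $1_B e(\phi)$ has Fourier complexity $\delta^{-O(|S|)^{O(1)}}$, whence the hypothesis collapses to a $U^2$ lower bound and one invokes part~(4) of \thref{TT1}.
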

Tao and Ter\"av\"ainen remark that if $s = 3$, it's possible that one can improve (3) of \thref{TT1} if one can improve the $U^3$ inverse theorem to make its dimension bounds $\log^{O(1)}(1/\delta)$ or overcome the induction of dimensions procedure that occurs with the quantitative equidistribution on two-step nilmanifolds. The purpose of this note is to demonstrate that existing tools of additive combinatorics are able to achieve both. \\\\
To achieve the first point, we use a quasi-polynomial $U^3$ inverse theorem using the work of Sanders \cite{S}. Crucially, we are able to replace the $\delta^{-O(1)}$ dimension bound of the $U^3$ inverse theorem of Green and Tao with $\log^{O(1)}(1/\delta)$. For completeness sake we shall provide a proof of this fact in Appendix A. To achieve the second point, we use the equidistribution theory of locally quadratic forms on Bohr sets. As we will explain in Section 1.2 below, bounds lost here are only single exponential in the dimension of the nilmanifold. \\\\
To apply our results to the von Mangoldt function we must avoid using the transference principle since the $W$-trick simply does not allow bounds better than $\log^{-c}(N)$. Instead, we opt to apply the $U^3$ inverse theorem directly. One can prove a similar result using a ``smooth approximant" to the von Mangoldt function $\Lambda_R$, if one uses a version of the $U^3$ inverse theorem with moment bound hypothesis instead of one with pointwise bound hypothesis, which we shall demonstrate in Appendix $A$. For this paper, though, the author ultimately found it more convenient to follow the argument in \cite{TT} where only a $U^3$ inverse theorem with a pointwise bound hypothesis is necessary.
\subsection{General Strategy and Relation to Previous Work}
Our general strategy is to prove an inverse-type theorem of the following form
\begin{theorem}
For each $N> N_0 > 0$ with $N_0$ effective, the following holds: let $\delta > 0$ and suppose
$$\|\mu - \mu_{Siegel}\|_{U^3([N])} \ge \delta$$
and
$$\|\Lambda - \Lambda_{Siegel}\|_{U^3([N])} \ge \delta.$$
Then either $\delta \le \exp(-O(\log^{1/C}(N)))$ or there exists $\epsilon > (\delta/\log(N))^{O(\log(1/\delta))^{O(1)}}$ such that
$$\|\mu - \mu_{Siegel}\|_{U^2([N])} \ge \epsilon$$
$$\|\Lambda - \Lambda_{Siegel}\|_{U^2([N])} \ge \epsilon.$$
\end{theorem}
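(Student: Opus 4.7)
The plan is a three-stage reduction, closely modeled on Green-Tao \cite{GT1} and Tao-Ter\"av\"ainen \cite{TT}, but taking advantage of the quasi-polynomial $U^3$ inverse theorem of Appendix A (which builds on Sanders \cite{S}).

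First, assuming $\|\mu - \mu_{Siegel}\|_{U^3([N])} \geq \delta$, the improved $U^3$ inverse theorem produces a $2$-step nilmanifold $G/\Gamma$ of dimension $d = O(\log^{O(1)}(1/\delta))$ and complexity polynomial in $1/\delta$, a polynomial sequence $g\colon \mathbb{Z}\to G$, and a $1$-bounded Lipschitz function $F$ on $G/\Gamma$ such that
$$\big|\mathbb{E}_{n \in [N]} (\mu(n) - \mu_{Siegel}(n))\, F(g(n)\Gamma)\big| \gg \delta^{O(1)}.$$
The essential gain over \cite{GT1} is the polylogarithmic (rather than polynomial) dimension bound on $d$; this is what makes the subsequent losses tractable.

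Second, I would decompose $F(g(n)\Gamma)$ using the ``old-fashioned'' approach alluded to in the introduction: run a quantitative equidistribution theorem for polynomial sequences on the $2$-step nilmanifold, then unpack it into a locally quadratic structure on regular Bohr sets. On each piece $B$ of an appropriate Bohr-set partition, the nilsequence factors as a product of a locally linear phase $\chi$ and a locally quadratic phase $e(\phi)$, and the correlation becomes
$$\big|\mathbb{E}_{n \in B} (\mu(n) - \mu_{Siegel}(n))\, \chi(n)\, e(\phi(n))\big| \gg \delta^{O(1)}.$$
If the radius and regularity demanded by the equidistribution step exceed what is available at scale $N$, then tracing back the bounds forces $\delta \leq \exp(-O(\log^{1/C}(N)))$, which is precisely the first alternative of the dichotomy.

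Third, on each Bohr piece I would run a Vaughan-type decomposition of $\mu - \mu_{Siegel}$ into Type I and Type II sums against $\chi(n)\, e(\phi(n))$, following \cite{TT}. The quadratic phase $\phi$ can be absorbed on each piece via completion of squares and Fourier expansion, after which the surviving bilinear object is controlled by $\|\mu - \mu_{Siegel}\|_{U^2(B)}$. Linearly reassembling across the Bohr partition yields the claimed lower bound on $\|\mu - \mu_{Siegel}\|_{U^2([N])}$. Tracking losses -- single-exponential $\exp(O(d))$ from equidistribution, $\mathrm{poly}(1/\delta)$ from the inverse theorem and Vaughan's identity, and polylog in $N$ from completion -- gives the stated $\epsilon \gg (\delta/\log N)^{O(\log(1/\delta))^{O(1)}}$. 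The statement for $\Lambda - \Lambda_{Siegel}$ follows by the same argument with the appropriate M\"obius-to-primes transfer of \cite{TT}.

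The main obstacle will be the second step: making the factorization of a $2$-step nilsequence into locally quadratic data on Bohr sets fully quantitative, with only single-exponential dependence on $d$. The traditional induction-on-dimensions approach from \cite{GT2} introduces double-exponential loss in $d$, which combined with the polylogarithmic dimension bound from the Sanders inverse theorem would destroy the quasi-polynomial-in-$\delta$ bound. Avoiding the induction in favor of direct equidistribution of locally quadratic forms on Bohr sets is the technical crux of the approach, and is also where the factor $\log^{1/C}(N)$ in the first branch of the dichotomy is forced.
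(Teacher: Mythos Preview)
Your high-level architecture matches the paper's, but the order of operations is inverted and your step 3 contains a genuine gap.

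On the route: the paper's inverse theorem (Appendix A) outputs a single Bohr set $B(S,\rho)$, a translation $h$, and a locally quadratic form $\phi\colon B(S,64\rho)\to\mathbb{R}/\mathbb{Z}$ \emph{directly}, with $|S|\le O(\log^{O(1)}(1/\delta))$ and $\rho\ge\exp(-O(\log^{O(1)}(1/\delta)))$. There is no detour through a $2$-step nilmanifold and no Bohr-set partition to ``linearly reassemble.'' More importantly, the Vaughan decomposition is applied \emph{globally} and \emph{before} any equidistribution analysis: one decomposes $\mu-\mu_{Siegel}$ (and $\Lambda-\Lambda_{Siegel}$) into type I, twisted type I, and type II sums, and only then analyzes each against the single object $\psi(n)e(\phi(n))$, with $\psi$ a smooth approximant to $1_B$ (this is \thref{type1type2}). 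Doing Vaughan ``on each Bohr piece'' after an equidistribution step would not produce the bilinear structure one needs.

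The substantive gap is the claim that ``the quadratic phase $\phi$ can be absorbed on each piece via completion of squares and Fourier expansion.'' Completion of squares is meaningless for a locally quadratic form on a Bohr set until one has proved that the associated bilinear form $\phi''$ is approximately rational on a shrunken Bohr set, and establishing that is the entire content of Sections 5--7. The mechanism is specific, not generic: for type I sums one shifts along a progression inside $B$ to produce a genuine one-variable quadratic (\thref{philemma}); for type II sums one uses the quartic identity of \cite{GT4} (\thref{quartic}) to exploit cancellation in both the $[L,2L]$ and $[M,2M]$ ranges simultaneously, producing a degree-four multivariate polynomial with leading coefficient $\phi''(ab,ab)$. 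Each case yields $\|\phi''(c,c)\|_{Q,\mathbb{R}/\mathbb{Z}}$ small only for $c$ in a set of the form $\bigcup_{b\in\mathcal{D}} B_b$, which must then be propagated to a full shrunken Bohr set. You correctly flag the double-exponential obstacle, but the paper's resolution is not an alternate factorization theorem: it is to restrict $\mathcal{D}$ in the type II case to the \emph{primes} in an interval, so that the second-moment argument of \thref{Bohr1} uses the prime-divisor function (uniformly bounded moments) rather than the full divisor function (whose $O(|S|)$-th moments would cost a factor doubly exponential in $|S|$, as in \cite[Proposition 11.1]{GT4}). Only after $\phi''$ is shown to be roughly rational via \thref{Bohr2}--\thref{Bohr4} does $e(\phi)$ acquire Fourier complexity $\delta^{-O(|S|)^{O(1)}}$, and only then does the $U^2$ lower bound follow.
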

From there, since we have quasi-polynomial upper bounds on $\|\mu - \mu_{Siegel}\|_{U^2([N])}$ and $\|\Lambda - \Lambda_{Siegel}\|_{U^2([N])}$, we will be able to deduce that $\delta$ is quasi-polynomial in $N$. Since $\delta$ will be quasi-polynomial in $N$, we will often times omit the $\log(N)$ terms in the bounds since $\delta/\log(N) \gg \delta^{O(1)}$ anyways. \\\\
To prove this inverse theorem, we use a type I/type II decomposition as used in previous results of this form, e.g., \cite{GT2}, \cite{GT4}, \cite{TT}. The use of the circle method along with type I and type II decompositions to prove similar results was first used by Vaughan. See \cite{V} for an introduction to related methods. Of these previous works, this paper bears the most resemblance to \cite{GT4}, and thus many of the details of the proof are similar to those in \cite{GT4}. The approach of \cite{GT4} uses the equidistribution theory of quadratic phases on Bohr sets. In this setting, one seemingly has visibly worse versions of the equidistribution theory over a group and we are often left with an inequality of the following form (of which a similar inequality occurs in the type II case):
\begin{equation}
\mathbb{E}_{\ell \in [L, 2L], m \in [M, 2M]} 1_B(\ell m) e(\phi(\ell m)) \gg \delta^{O(1)} \tag{1}
\end{equation}
where $B = B(S, \rho)$ is a Bohr set and $\phi: B(S, 64\rho) \to \mathbb{R}/\mathbb{Z}$ is a locally quadratic form. In our case, the $U^3$ inverse theorem gives us that $|S| = \log^{O(1)}(1/\delta)$ and $\rho \gg \delta^{O(1)}$. One can in theory use the geometry of numbers of write everything in local coordinates (i.e. passing to a generalized arithmetic progression) as done in \cite{GW}, but it becomes unclear what happens when one multiplies two elements of translated generalized arithmetic progressions. Thus, we use a coordinate-free approach to analyze the exponential sum which is based on the following two crucial observations of \cite{GT4}:
\begin{itemize}
\item[1.] Applying a van der Corput type lemma (or Cauchy-Schwarz) to the sum over $\ell \in [L, 2L]$ in (1) resembling the form
$$\mathbb{E}_{\ell \in [L, 2L], \ell' \in [0, L], m \in [M, 2M]} \nu(\ell')1_B(\ell m) 1_B((\ell + \ell' )m) e(\phi((\ell + \ell')m) - \phi(\ell m)) \ge \delta^{O(1)}$$
where $\nu(\ell') = \max(1 - \ell'/L, 0)$. The first observation of \cite{GT4} is that we may Fourier approximate $1_B$ and $\nu$ which would give us an oscillatory sum, at the cost of replacing $\delta$ with $\delta^{O(|S|)^{O(1)}}$. Strictly speaking, we can't use $1_B$ since the Fourier approximation to $1_B$ has errors that are only small in $L^1([N])$. Thus, we must approximate $1_B$ with a smooth function $\psi$ \emph{before} we pass to a type I or type II sum and then Fourier approximate $\psi$. This approximation has errors small in $L^\infty([N])$ and can be used.
\item[2.] The second observation is that if $a$ and $b$ are chosen so that for all $w \in [UV]$ with $U, V > 0$, $wab$ lies in the Bohr set $B(S, \rho)$ and if $(\ell + ua)(m + vb)$ also lie in $B(S, \rho)$ for enough values of $u$ and $v$, then $\partial_{(u_0, v_0), (u_1, v_1)} \phi((\ell + ua)(m + vb))$ is a degree four multivariate polynomial in $u_0, u_1$ and $v_0, v_1$. This is specified in \thref{quartic}.
\end{itemize}
The second point tells us that we should average over progressions of common difference $a$ and common difference $b$ instead of averaging over $\ell$ and $m$. This can be done via a pigeonhole type argument. It is important to note that since we want to average over $a$ and $b$ in a range large enough to exhibit cancellation of the phase that this only works in the type II case where $\ell$ and $m$ are both sufficiently large (e.g., larger than $\delta^{-O(|S|)^{O(1)}}$). The first point tells us that once we have obtained the our degree four multivariate polynomial, we can then Fourier approximate the the remaining undesired terms to obtain an oscillatory sum over a polynomial phase. This can then be estimated using standard techniques. For this article, by performing a Fourier approximation, we lose bounds of \textit{at most} $\delta^{-O(|S|)^{O(1)}}$. These are okay factors to lose since these factors will still be quasi-polynomial in $N$. Thus, we can turn a less well-behaved equidistribution estimate over locally quadratic forms to a well behaved quadratic equidistribution estimate at an acceptable cost. \\\\
Ultimately, we will be able to show that $\phi$, roughly speaking, exhibits one-step behavior in some set of the form
\begin{equation}
\bigcup_{b \in \mathcal{D}} B_b \tag{2}
\end{equation}
where $B_b = \{x \in B: x \equiv 0 \pmod{b}\}$ and $\mathcal{D}$ is some large enough set. Our aim is to show that this set is large enough so that we can propagate $\phi$ to have one-step behavior in some shrunken version of $B$. Unfortunately, here, the approach of \cite{GT4} breaks down. In \cite[Proposition 11.1]{GT4}, the authors use a second moment method, relying on the $O(|S|)^{\mathrm{th}}$ moments of the divisor function, resulting in losses double exponential in $|S|$, which are unacceptable bounds for us to lose. If one settles for worse bounds with low divisor moment estimates, then the lower bound of (2) is something of the size $\rho^{O(|S|)}|B|$, which is far too small for us to conclude anything about $\phi$. This is where this paper differs from \cite{GT4}. In the type II case, the set $\mathcal{D}$ is an interval, so we can restrict to the set of prime numbers since there are enough prime numbers in $\mathcal{D}$ so that we only lose factors of $\log^{O(1)}(N)$ from passing to the prime numbers. There, instead of using the divisor function, we use the \textit{prime divisor function}, which has very good moment bounds. In the type I case, we don't have a nice description of $\mathcal{D}$. Instead, we restrict the type I/type II decomposition so that the type I sum appears as
$$\sum_{d \le R} a_d 1_{d | n} 1_{[N']}(n)$$
with $R$ is quasi-polynomial in $N$ (i.e. $R$ is extremely small). Here, $a_d$ are divisor function bounded coefficients which can be controlled well. Another difference from \cite{GT4} is the use of $\mu_{Siegel}$ and $\Lambda_{Siegel}$ as in \cite{TT}. It is shown in \cite{TT} that $\mu_{Siegel}$ and $\Lambda_{Siegel}$ can be decomposed into type I and twisted type I sums of the form
$$\sum_{d \le \exp(O(\log^{1/2}(N)))} a_d 1_{d | n} 1_{[N']}(n)$$
$$\sum_{d \le \exp(O(\log^{1/2}(N)))} a_d 1_{d | n} 1_{[N']}(n)\chi_{Siegel}(n/d)$$
respectively (with $N' \le N$). In this case, $d$ can be too large for the type I case to handle, so we must handle cases of when $d \le R$ and $d > R$ separately. In the former case, we handle the type I sum the same way we would handle the type I sum for $\mu$. In the latter case, we handle the type I case like we would handle the type II case for $\mu$ since we are in the range of where $\ell$ and $m$ are sufficiently large. \\\\
There is work done by Le and Bienvenu \cite{BL} proving an analogous result in the case $\mathbb{F}_p[x]$. There, an analogous result to \thref{quartic} no longer appears to be true for $[U]$ and $[V]$ long intervals (or rather, when $[U]$ and $[V]$ are subspaces $\{f \in \mathbb{F}_p[x]: \deg(f) \le U\}$ and $\{f \in \mathbb{F}_p[x]: \deg(f) \le V\}$, respectively). Thus, in the type II case, unlike us, \cite{BL} is no longer able to exploit cancellation in \emph{both} the $[L, 2L]$ and $[M, 2M]$ sums in a sum similar to (1) and a set which plays a similar role as $\mathcal{D}$ one has to deal with is considerably more complex. It is shown after taking vertical and horizontal differences that using the bilinear Bogolyubov theorem, one may pass to a bilinear Bohr set, which was well-behaved enough for \cite{BL} to obtain their result. Although such a tool exists over general Abelian groups as shown by Mili\'cevi\'c in \cite{Mi}, no such tool is necessary since $\mathcal{D}$ in the type II case is an interval. For another instance where it seems necessary to exploit cancellation in both the $[L, 2L]$ and $[M, 2M]$ sums in a type II sum, see \cite{MSTT}.
\subsection{Remark on the Quantitative Equidistribution Theory of Two-Step Nilsequences}
Here, we will briefly explain why the equidistribution estimates considered here are single exponential in dimension and thus quantitatively speaking significantly better than \cite{GT3}. The reason is that we invoke the geometry of numbers which allows us to eliminate many rational relations at once. In \cite{JT}, the authors construct, given a locally quadratic form on a Bohr set, a nilsequence on (essentially\footnote{The explicit ambient nilmanifold that the construction of \cite{JT} lies on is a three-step nilmanifold, but the nilsequence that \cite{JT} constructs lies on a coset of a two-step nilmanifold.}) a two-step nilmanifold that corresponds to it. Viewing the equidistribution theory of locally quadratic forms in this form, we see that the conclusion of \cite[Proposition 4.11]{GT5} or \cite[Corollary 6.2]{GW} implies that the locally quadratic form has roughly rational coefficients when written in local coordinates. This can be combined with Vinogradov's Lemma (e.g. \thref{Vinogradov}) to show that the degree one and degree two components of $\text{Poly}_{\le 2}(\mathbb{R}^{d})$ in the construction of the nilsequence (denoted $\Phi(h, \theta)(n)$ in \cite{JT}) are roughly rational, which implies that, modulo some rational relation, the nilsequence lies on (a coset of) an abelian nilmanifold. \\\\
This process is more efficient than Green-Tao's original approach of locating a single horizontal character to quotient the nilpotent Lie group by to lower the dimension, since the horizontal character depends on the complexity of the nilmanifold, which can increase when we take a quotient by the horizontal character. This leads to losses double exponential in dimension.

\subsection{Organization of the Paper}
In Section 2, we define the notation used in the paper. In Section 3, we prove Fourier approximation results in order to ``automate" the Fourier approximation heuristic in the above discussion that will appear later in the argument. In Section 4, we apply the $U^3$ inverse theorem, take a type I/II decomposition, and reduce the theorem to an equidistribution estimate, ultimately reducing the problem to \thref{type1type2}. The proof of \thref{type1type2} will occupy Sections 5, 6, and 7. In Section 5, we analyze the type I and twisted type I sum. In Section 6, we analyze the type II sum. In Section 7, we use the conclusions of Sections 5 and 6 to finish \thref{type1type2}. In Section 8, we apply the main result to obtain asymptotic estimates in combinatorial number theory. \\\\
In Appendix A, we provide a proof of the $U^3$ inverse theorem over $\mathbb{Z}/N\mathbb{Z}$ with polylogarithmic dimension bounds, using the work of Sanders \cite{S} and Green-Tao \cite{GT7}. In Appendix B, we record a few classical results in equidistribution theory and diophantine approximation. Finally, in Appendix C, we recall some basic properties of the Gowers uniformity norms.
\subsection{Acknowledgements}
We would like to thank Terence Tao for valuable guidance, helpful discussions, and for numerous comments on previous drafts. We would also like to thank Joni Ter\"av\"ainen and Thomas Bloom for helpful and encouraging comments comments and corrections, and Borys Kuca for ecouragement. Some of these discussions happened while the author was visiting the IAS during the conferences \emph{Workshop on Additive Combinatorics and Algebraic Connections} and \emph{Workshop on Dynamics, Discrete Analysis and Multiplicative Number Theory} and we would like to thank the organizers of these conferences. This research is supported by the NSF Graduate Research Fellowship Grant No. DGE-2034835.

\section{Notation}
We shall use standard Vinogradov notation of $A \ll B$ or $A = O(B)$ if $A \le CB$ for some constant $C$. We denote $e(x) := e^{2\pi i x}$. Let $[N] = \{1, \dots, N\}$, $[M, N] = \{M, M + 1, M + 2, \dots, N\}$ for $M \le N$ integers, and given a set $A$, $\mathbb{E}_{n \in A} = \frac{1}{|A|} \sum_{n \in A}$, $\|x\|_{\mathbb{R}/\mathbb{Z}}$ denotes the distance from $x$ to its nearest integer. We also denote for $Q \in \mathbb{R}$ positive,
$$\|x\|_{Q, \mathbb{R}/\mathbb{Z}} := \inf_{|q| \le Q, q \in \mathbb{Z}} \|qx\|_{\mathbb{R}/\mathbb{Z}}.$$
Given a finite abelian group $G$ and a function $f\colon G \to \mathbb{C}$, let 
$$\|f\|_{U^s(G)}^{2^s} := \mathbb{E}_{n, h_1, h_2, \dots, h_s \in G} \prod_{\omega \in \{0, 1\}^s} C^{|\omega|}f(x + \omega \cdot h)$$
be the $U^s$ Gowers uniformity norm. We shall denote
$$\|f\|_{U^s([N])} := \frac{\|f1_{[N]}\|_{U^s(\mathbb{Z})}}{\|1_{[N]}\|_{U^s(\mathbb{Z})}}$$
where for a finitely supported function $h: \mathbb{Z} \to \mathbb{C}$,
$$\|h\|_{U^s(\mathbb{Z})} := \sum_{n, h_1, h_2, \dots, h_s \in \mathbb{Z}} \prod_{\omega \in \{0, 1\}^s} C^{|\omega|}f(x + \omega \cdot h).$$
We refer the reader to Appendix C, \cite{T2}, and \cite{TV} for more properties of Gowers uniformity norms. For $G$ locally compact and abelian and $f\colon G \to \mathbb{C}$, its Fourier transform $\hat{f}\colon \widehat{G} \to \mathbb{C}$, where $\widehat{G}$ its the pontryagin dual of $G$, will be defined as
$$\hat{f}(\xi) := \int_{G} e(\xi \cdot x) f(x) d\mu(x)$$
for $\xi \in \widehat{G}$ where $\mu$ is the Haar measure of $G$. If $G$ is finite, then
$$\hat{f}(\xi) = \mathbb{E}_{x \in G} e(\xi \cdot x) f(x).$$
Let $\Lambda$ be the von Mangoldt function, $\mu$ the M\"obius function, and $N > 0$ a large integer. Let $Q = \exp(\log^{1/10}(N))$. Let $\beta$ be a possible Siegel zero of level $Q$ with conductor $q_{Siegel} \le Q$. Following \cite{TT}, we define
$$\mu_{Siegel} := (1_{n | P(Q)} \mu(n)) * (\alpha n^{\beta - 1} \chi_{Siegel}(n)1_{(n, P(Q)) = 1})$$
$$\Lambda_Q := \frac{P(Q)}{\phi(P(Q))}1_{(n, P(Q)) = 1}$$
$$\Lambda_{Siegel} := \Lambda_Q(1 - n^{\beta - 1} \chi_{Siegel}(n))$$
where $P(Q) := \prod_{p < Q, p \text{ prime}} p$, $\chi_{Siegel}$ is the character for the Siegel zero $\beta$, and
$$\alpha := \frac{1}{L'(\beta, \chi_{Siegel})}\prod_{p < Q} \left(1 - \frac{1}{p}\right)^{-1}\left(1 - \frac{\chi_{Siegel}(n)}{p^\beta}\right)^{-1}.$$
For a locally compact abelian group $G$, and a subset $S \subseteq \hat{G}$, the Bohr set of $S$ with radius $\rho$, denoted $B(S, \rho)$, consists of $\{x \in G: \|\alpha \cdot x\| < \rho \text{ } \forall \alpha \in S\}$. If $G$ is finite, Bohr sets satisfy the following properties:
\begin{itemize}
    \item $\frac{4}{|S|} \ge |B(S, \rho)| \ge \rho^{-|S|}N$
    \item $|B(S, 2\rho)| \le 4^{|S|}|B(S, \rho)|$
\end{itemize}
A Bohr set is \textit{regular} if whenever $\epsilon \le \frac{1}{100|S|}$, 
$$|B(S, \rho)| (1 - 100|S|\epsilon) \le |B(S, \rho (1 + \epsilon))| \le |B(S, \rho)|(1 + 100|S|\epsilon).$$
It was shown by Bourgain in \cite{B} that regular Bohr sets are ubiquitous in the sense that there exists $\rho' \in [\rho/2, \rho]$ such that $B(S, \rho')$ is regular. Unless specified otherwise, all Bohr sets will be assumed to be regular. A useful property of regular Bohr sets is that it allows us to localize to a smaller Bohr set with small error:
\begin{lemma}[Localization lemma]\thlabel{localization}
Let $B(S, \rho)$ be a regular Bohr set, $f\colon G \to \mathbb{C}$ a function, and $A \subseteq B(S, \epsilon \rho)$ a set. Then
$$|\mathbb{E}_{x \in B(S, \rho)} f(x) - \mathbb{E}_{x \in B(S, \rho)}\mathbb{E}_{y \in x + A} f(y)| \le 200|S| \epsilon.$$
\end{lemma}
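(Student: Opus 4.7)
The plan is to swap the order of the two averages, reduce to a translation-comparison question, and then use the regularity of $B(S, \rho)$ to control the symmetric difference between $B(S, \rho)$ and its translates by elements of $A$. Throughout the argument I will assume $\|f\|_\infty \le 1$ (which the statement presumably normalizes, since otherwise a factor of $\|f\|_\infty$ is needed on the right-hand side).

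First, by Fubini (both averages are over finite sets), write
\[
\mathbb{E}_{x \in B(S, \rho)} \mathbb{E}_{y \in x + A} f(y) \;=\; \mathbb{E}_{a \in A} \mathbb{E}_{x \in B(S, \rho)} f(x + a).
\]
Subtracting $\mathbb{E}_{x \in B(S, \rho)} f(x)$ and moving the absolute value inside the average over $a$, it suffices to show that for every $a \in A$,
\[
\bigl|\mathbb{E}_{x \in B(S, \rho)} f(x + a) - \mathbb{E}_{x \in B(S, \rho)} f(x)\bigr| \;\le\; 200|S|\epsilon.
\]

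Second, for fixed $a \in A \subseteq B(S, \epsilon \rho)$, the difference above is bounded by
\[
\frac{1}{|B(S, \rho)|} \sum_{x \in G} \bigl|1_{B(S,\rho)}(x) - 1_{B(S,\rho)-a}(x)\bigr|,
\]
i.e.\ the normalized size of the symmetric difference $B(S, \rho) \triangle (B(S,\rho) - a)$. Here I use $\|f\|_\infty \le 1$. The key geometric observation is that any element of this symmetric difference must lie in $B(S, \rho(1+\epsilon)) \setminus B(S, \rho(1-\epsilon))$: indeed, if $x \in B(S, \rho)$ and $x + a \notin B(S, \rho)$, then since $\|a \cdot \xi\|_{\mathbb{R}/\mathbb{Z}} < \epsilon\rho$ for every $\xi \in S$, the triangle inequality forces $\rho(1-\epsilon) \le \|x \cdot \xi\|_{\mathbb{R}/\mathbb{Z}} < \rho$ for some $\xi$; the symmetric case is analogous.

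Finally, since $B(S, \rho)$ is regular and $\epsilon \le \frac{1}{100|S|}$ (the nontrivial range; if $\epsilon$ is larger the bound $200|S|\epsilon$ is vacuous once one also assumes the trivial bound $\le 2$), the defining regularity inequalities give
\[
|B(S, \rho(1+\epsilon))| - |B(S, \rho(1-\epsilon))| \;\le\; 200|S|\epsilon \, |B(S, \rho)|,
\]
so the normalized symmetric difference is at most $200|S|\epsilon$. Combining with the Fubini step completes the proof.

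There is no real obstacle here; the only subtlety is the geometric inclusion of the symmetric difference into the annulus $B(S, \rho(1+\epsilon)) \setminus B(S, \rho(1-\epsilon))$, which is an immediate consequence of the triangle inequality in $\mathbb{R}/\mathbb{Z}$. The result is essentially a direct application of the definition of regularity to a translation-averaging identity.
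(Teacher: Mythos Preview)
Your proof is correct and follows essentially the same approach as the paper: the paper's one-line argument invokes exactly the fact that $B(S,\rho)$ and a translate $a+B(S,\rho)$ with $a\in B(S,\epsilon\rho)$ differ by at most $200|S|\epsilon\,|B(S,\rho)|$ elements, which is precisely the symmetric-difference-in-annulus-plus-regularity computation you spelled out. Your remark that the bound implicitly assumes $\|f\|_\infty\le 1$ is also correct and worth noting.
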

\begin{proof}
This follows from the fact that $B(S, \rho)$ and $y + B(S, \rho)$ differs by at most $200 |S| \epsilon$ elements whenever $y \in B(S, \epsilon \rho)$.
\end{proof}
We will need to work with smooth approximants to Bohr sets $1_B$ supported in $B$, which will denoted $\psi$ throughout the article. We will define the ``norms"
$$\|n\|_S = \sup_{\alpha \in S} \|\alpha n \|_{\mathbb{R}/\mathbb{Z}}.$$
In addition, we will assume that $1/N \in S$ so we have the following inequality:
$$|n| \le N\|n\|_S$$
whenever $n \in [-N/2, N/2]$. This assumption will turn out to be harmless because one can always localize to a small Bohr set by using the pigeonhole principle and regularity properties of the Bohr set. \\\\
Given a Bohr set $B(S, 16\rho)$, a \emph{locally quadratic form} $\phi\colon B(S, 16\rho) \to \mathbb{R}/\mathbb{Z}$ is a function that satisfies
$$\partial_{h_1, h_2, h_3} \phi(x) = 0$$
whenever $x + \omega \cdot h \in B(S, 16\rho)$ for all $\omega \in \{0, 1\}^3$ where $\partial_h f(x) = f(x + h) - f(x)$ is the discrete derivative and $\partial_{h_1, h_2, h_3}f(x) = \partial_{h_1}(\partial_{h_2}(\partial_{h_3} f(x)))$. We may define a locally bilinear form associated to it $\phi'': B(S, \rho) \times B(S, \rho) \to \mathbb{R}/\mathbb{Z}$ associated to $\phi$ by defining 
$$\phi''(a, b) := \phi(a + b) - \phi(a) - \phi(b) + \phi(0).$$ 
In fact, one can show that $\phi''(a, b) = \phi(n + a + b) - \phi(n + a) - \phi(n + b) + \phi(n)$ whenever $n \in B(S, \rho)$. To check that $\phi''$ is indeed bilinear, we see that 
\begin{align*}
\phi''(a + c, b) &= \phi(a + b + c) - \phi(a + c) - \phi(b) + \phi(0) \\
&= \phi''(b, c) + \phi(a + b) + \phi(a + c) - \phi(a) - \phi(a + c) - \phi(b) + \phi(0)\\
&= \phi''(a, b) + \phi''(c, b)
\end{align*}
from the identity
$$\phi(a + b + c) - \phi(a + b) - \phi(a + c) + \phi(a) = \phi''(b, c).$$
We would like to remark that although the statement of the local $U^3$ inverse theorem only gives correlation on $B(S, \rho)$ of the function with a quadratic phase defined translation of $B(S, \rho)$, we would like for technical reasons for $\phi$ to be defined on a slightly larger Bohr set $B(S, 64\rho)$. This is possible via a small modification of the argument of the inverse theorem which we shall state in \thref{sandersinversetheorem}.
\section{Fourier Complexity}
The purpose of this section is to make rigorous and automate the ``Fourier approximation" heuristic argument as mentioned in the introduction. We recall from discussion of the introduction that while analyzing oscillatory sums over objects that aren't quite abelian groups or arithmetic progressions, we may sometimes end up with cutoff terms while invoking Cauchy-Schwarz. The example mentioned in the introduction was
$$\mathbb{E}_{\ell \in [L, 2L], m \in [M, 2M]} \psi(\ell m) e(\phi(\ell m)) \gg \delta^{O(1)}.$$
In a similar looking sum appearing later in our argument, it becomes desirable to eliminate the term $\psi$. The method used here to eliminate such terms is to Fourier approximate such a $\psi$. \\\\
A useful notion in additive combinatorics is \textit{Fourier Complexity}, which measures how many Fourier phases $n \mapsto e(\alpha n)$ captures the behavior of $n \mapsto f(n)$. Specifically, we say that a function $f\colon [N] \to \mathbb{C}$ has \emph{$L^p$ $\delta$-Fourier complexity} at most $M$ if we can write
$$f = \sum_{i = 1}^k a_i e(\alpha_i n) + g$$
where $\|g\|_{L^p} \le \delta$ and $\sum_{i = 1}^k |a_i| \le M$. If $p = \infty$, we shall refer to $L^p$ $\delta$-Fourier complexity as just $\delta$-Fourier complexity and if $\delta = 0$, we will refer $\delta$-Fourier complexity as \emph{Fourier complexity}. 
\begin{lemma}[Fourier/Fejer Expansion lemma]\thlabel{FourierExpansion}
Let $f\colon \mathbb{T}^d \to \mathbb{C}$ be a continuous function with Lipschitz norm at most $L$, meaning that $\|f\|_{L^\infty(\mathbb{T}^d)} + \|f\|_{Lip(\mathbb{T}^d)} \le L$. Then we may write
$$f = \sum_{i = 1}^k a_i e(n_i x) + g$$
where $\sum_i^k |a_i| \le C^{d^2}L\delta^{-2d^{2} - d}$ and $\|g\|_\infty \le 3\delta$.
\end{lemma}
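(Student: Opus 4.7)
The plan is to approximate $f$ by convolution with a $d$-fold tensor product of one-dimensional Fej\'er kernels. Let $F_N(t):=N^{-1}\bigl(\sin(\pi Nt)/\sin(\pi t)\bigr)^2=\sum_{|n|<N}(1-|n|/N)\,e(nt)$ be the Fej\'er kernel of degree $N$ on the circle, which is nonnegative, has unit total mass, and is Fourier supported in $[-N,N]$. Its tensor product $F_N^{(d)}(x):=\prod_{j=1}^d F_N(x_j)$ inherits all three of these properties on $\mathbb{T}^d$, and if we set $g:=f-f\ast F_N^{(d)}$ then we immediately obtain a decomposition of the desired form with coefficients $a_n=\hat f(n)\prod_j(1-|n_j|/N)$ indexed by $n\in[-N,N]^d$. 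It therefore remains to pick $N$ so that both the uniform error and the $\ell^1$ mass of the $a_n$ are under control.

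For the error estimate, nonnegativity of $F_N^{(d)}$ together with the Lipschitz hypothesis give
\[
\|g\|_\infty\le \int_{\mathbb{T}^d}|f(x)-f(x-y)|F_N^{(d)}(y)\,dy \le L\sum_{j=1}^d\int_{\mathbb{T}}|y_j|\,F_N(y_j)\,dy_j,
\]
and the classical moment bound $\int_{\mathbb{T}}|t|F_N(t)\,dt\ll \log(N)/N$, obtained from the pointwise estimate $F_N(t)\le\min(N,(Nt^2)^{-1})$ and splitting at $|t|=1/N$, yields $\|g\|_\infty \ll Ld\log(N)/N$. Choosing $N$ a suitable constant multiple of $Ld\delta^{-1}\log(1/\delta)$ therefore pushes the error below $3\delta$. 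For the coefficient bound I would use the crude estimate $|\hat f(n)|\le\|f\|_{L^1(\mathbb{T}^d)}\le L$ together with the trivial mode count $(2N+1)^d$ to obtain
\[
\sum_n|a_n|\le L(2N+1)^d \ll L\bigl(CLd\delta^{-2}\bigr)^d,
\]
which after applying $d^d\le e^{d^2}$ fits inside the claimed $C^{d^2}L\delta^{-2d^2-d}$, the generous exponents in the target absorbing both the extra $L^d$ factor and the logarithmic losses from the Fej\'er moment.

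The ``main obstacle'' here is really just bookkeeping, since no deep inequality is required: the quantitative Fej\'er moment costs one logarithmic factor that has to be absorbed into the choice of $N$, and the tensorisation to dimension $d$ inflates the mode count by $N^d$, which is what forces an exponent of the form $\delta^{-2d^2-d}$ rather than a linear one. Everything else is standard Fej\'er approximation, and the bound as stated is quite loose: one could sharpen it by using the Fourier decay $|\hat f(n)|\ll L/\|n\|_\infty$ coming from Lipschitz continuity (saving a factor of $N$ in the mode count), but the generous statement of the lemma makes such a refinement unnecessary for the applications later in the paper.
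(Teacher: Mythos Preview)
Your Fej\'er-kernel approach is sound and genuinely different from the paper's: there the author convolves with $K=Q_\delta*Q_\delta$, where $Q_\delta$ is a smooth bump supported in $[-\delta,\delta]^d$, and then truncates the Fourier series of $f*K$ at some level $M$ using the rapid decay of $\hat K$. Your route has the pleasant feature that $f*F_N^{(d)}$ is already a trigonometric polynomial, so no separate truncation step is needed; the cost is that the Fej\'er kernel is not spatially localised, so the approximation error carries the logarithmic moment loss and forces $N$ to scale with $L$.

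That last point is where a real gap appears. Your claim that ``the generous exponents in the target absorb the extra $L^d$ factor'' is false: the stated bound $C^{d^2}L\delta^{-2d^2-d}$ is \emph{linear} in $L$, whereas your coefficient sum $L(2N+1)^d$ with $N\asymp Ld\delta^{-1}\log(1/\delta)$ scales like $L^{d+1}$, and since $L$ and $\delta$ are independent parameters no power of $\delta^{-1}$ or of $C$ can swallow $L^d$. In fairness, the paper's own proof has exactly the same defect: the displayed line $\|f-f*K\|_\infty\le\int zK(z)\,dz\le 2\delta$ silently drops the Lipschitz constant and should read $\le 2L\delta$, after which the same $L$-dependence propagates into the choice of kernel scale and of $M$. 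So the lemma as written appears to be slightly mis-stated (either the error should be $\|g\|_\infty\le 3L\delta$, or the coefficient bound should carry $L^{O(d)}$); this is immaterial for every application later in the paper, where $L$ is always of the shape $(|S|/\delta)^{O(1)}$ and is therefore absorbed by the ambient $\delta^{-O(|S|^2)}$ bounds anyway.
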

\begin{proof}
Let $\phi\colon \mathbb{R} \to \mathbb{R}_{\ge 0}$ be a smooth compactly supported function supported in $[-1, 1]$ and with integral one. Let $Q_\delta(x) = \prod_{i = 1}^d \delta^{-1}\phi(x_i/\delta)$ and let $K = Q_\delta * Q_\delta$ be a Fejer-type kernel. Since $|\hat{\phi}(\xi)| \ll_k |\xi|^{-k}$,
$$|\hat{K}(\xi)| \le C^d \delta^{2d} \xi^{-2}$$
for some constant $C$ it follows that the Fourier coefficients of $K$ larger than $M$ contributes at most $C^d M^{-1}\delta^{2d}$. In addition, for $M = C^{-d}\delta^{-2d - 1} \|f\|_{Lip}^{-1}$
$$\|f - f * K\|_\infty \le \int |f(x) - f(y)|K(x - y) dy = \int z K(z) dz \le 2\delta$$
since $K$ has integral one and is supported on $|x| \le 2\delta$. Set
$$h(x) = \sum_{k \in \mathbb{Z}^d: |k| \le M} \hat{f}(k) \hat{K}(k) e(kx).$$
Then by Fourier inversion formula, it follows that
$$\|h - f * K\|_\infty \le \delta.$$
Thus, $\|h - f\|_\infty \le 3\delta$. The sum of the Fourier coefficients of $H$ is at most $L C^{-d^2}\delta^{-2d^2 - d}$.
\end{proof}
Thus, if $f(n) = F(n\alpha)$ where $f\colon \mathbb{T}^d \to \mathbb{C}$ is Lipschitz continuous and has Lipschitz norm at most $L$, then $f$ has $\delta$-Fourier complexity at most $O(L/\delta)^{O(d^2)}$. Such a sequence $(F(n\alpha))_{n \in [N]}$ is known as a \textit{degree one nilsequence}. We say that a sequence $g \colon [N] \to \mathbb{C}$ is an $\epsilon$-\textit{approximate degree one nilsequence} if there exists a degree one nilsequence $F(\alpha n)$ such that 
$$\mathbb{E}_{n \in [N]} |F(\alpha n) - g(n)| \le \epsilon.$$
A frequent example of an approximate degree one nilsequence in this article is an indicator function of a regular Bohr set. Regularity is not strictly necessary but not assuming regularity introduces some unnecessary complications (that can be dealt with using similar arguments to those in \cite[Appendix E]{GTZ2}). Let $B(S, \rho) = \{x \in [N]: \|\alpha x\| \le \rho \text{ }\forall \alpha \in S\}$.
\begin{lemma}\thlabel{BohrFourier}
Suppose $B(S, \rho)$ is a regular Bohr set. Let $\delta < \rho/1000$. Then $1_{B(S, \rho)}$ is an $\delta$-approximate degree one nilsequence with Lipschitz norm at most $1000|S|/\delta$, and hence has $\epsilon$-Fourier complexity at most $\left(\frac{1000|S|}{\delta \epsilon}\right)^{O(|S|^2)}$.
\end{lemma}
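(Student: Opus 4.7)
The plan is to realize $1_{B(S,\rho)}$ as the pullback under $n \mapsto (\alpha_1 n,\dots,\alpha_{|S|} n)$ of the indicator of a box on the torus $\mathbb{T}^{|S|}$, approximate that indicator by a Lipschitz function using the regularity of the Bohr set to control the $L^1$ error, and then invoke \thref{FourierExpansion}.

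First, list $S = \{\alpha_1,\dots,\alpha_{|S|}\}$ and write $\pi(n) = (\alpha_1 n,\dots,\alpha_{|S|} n)$, so that $1_{B(S,\rho)} = 1_{[-\rho,\rho]^{|S|}} \circ \pi$. For a parameter $\eta > 0$ to be chosen, I would take $F \colon \mathbb{T}^{|S|} \to [0,1]$ to be a product of one-dimensional piecewise linear bumps, each identically $1$ on $[-\rho+\eta,\rho-\eta]$ and vanishing outside $[-\rho,\rho]$. A direct calculation gives $\|F\|_{L^\infty} \le 1$ and $\|F\|_{\mathrm{Lip}} \le |S|/\eta$, so that taking $\eta$ of order $\delta$ matches the Lipschitz bound $1000|S|/\delta$ claimed in the lemma.

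Next, since $F \circ \pi$ and $1_{B(S,\rho)}$ agree pointwise on $B(S,\rho-\eta)$ and both vanish off $B(S,\rho)$, the $L^1$ error is at most $|B(S,\rho) \setminus B(S,\rho-\eta)|/N$. The regularity hypothesis on $B(S,\rho)$---applicable because the hypothesis $\delta < \rho/1000$ forces $\eta/\rho$ to stay well below the regularity threshold $1/(100|S|)$---converts this into $O(|S|\eta/\rho)$, which is $\leq \delta$ for the chosen $\eta$. This establishes $1_{B(S,\rho)}$ as a $\delta$-approximate degree one nilsequence with the stated Lipschitz norm.

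Finally, I would apply \thref{FourierExpansion} to $F$ with $d = |S|$ and $L = 1000|S|/\delta$ at accuracy $\epsilon$: this expresses $F = \sum_{i=1}^k a_i e(m_i \cdot x) + g$ on $\mathbb{T}^{|S|}$ with $\|g\|_\infty \le 3\epsilon$ and $\sum_i |a_i| \le C^{|S|^2}(1000|S|/\delta)\,\epsilon^{-2|S|^2 - |S|}$. Composing with $\pi$ turns each phase $e(m_i \cdot x)$ into a genuine additive character $e\bigl((m_i \cdot \alpha) n\bigr)$, yielding the claimed $\epsilon$-Fourier complexity of $1_{B(S,\rho)}$ as $(1000|S|/(\delta\epsilon))^{O(|S|^2)}$. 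The delicate point in the whole argument is that no Lipschitz function can approximate a discontinuous indicator in $L^\infty$; the entire approach rests on using Bohr set regularity to convert the unavoidable boundary mismatch into a small averaged error, which is precisely where both the hypothesis on $\rho$ and the factor of $|S|$ in the Lipschitz norm enter.
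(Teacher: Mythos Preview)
Your proposal is correct and follows essentially the same approach as the paper: realize $1_{B(S,\rho)}$ as the pullback of a torus box indicator, replace the indicator by a Lipschitz bump, use regularity to bound the $L^1$ error from the boundary layer, and then invoke \thref{FourierExpansion}. The only cosmetic difference is that you take an inner Lipschitz approximation (supported in the box) while the paper takes an outer one (supported in $\|x\|\le \rho+\delta/|S|$); both are handled by the same regularity estimate.
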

\begin{proof}
Let $\xi\colon [N] \to (\mathbb{R}/\mathbb{Z})^{|S|}$ via $n \mapsto (\alpha \cdot n)_{\alpha \in S}$. Thus, a candidate Lipschitz function is some function $\varphi$ that is $\delta/100|S|$ close $1_{\|x\|_{\mathbb{R}^{|S|}/\mathbb{Z}^{|S|}} < \rho}$ in $L^1$ norm which has support in $\|x\|_{\mathbb{R}^{|S|} /\mathbb{Z}^{|S|}} \le \rho + \delta/|S|$. We must show that this implies that $\varphi(\xi) - 1_{B(S, \rho)}$ has small $L^1$ norm. Since $B(S, \rho)$ is regular, it follows that
$$\|\varphi(\xi(\cdot)) - 1_{B(S, \rho)}(\cdot)\|_{L^1[N]} \le \delta.$$
Such a Lipschitz function $\varphi$ can be chosen to have Lipschitz norm $1000|S|/\delta$. The rest of the lemma thus follows from \thref{FourierExpansion}.
\end{proof}
We shall also prove that given $\phi$ a locally quadratic form, and $\phi''$ its associated bilinear form with $\|\phi''\|_{\mathbb{R}/\mathbb{Z}} \le \frac{1}{10}$ that $1_{B(S, \rho/16)}(n)e(\phi(n) - \frac{1}{2}\phi''(n, n))$ has bounded Fourier complexity. First, we show that $\phi(n) - \frac{1}{2}\phi''(n, n)$ is locally affine linear. To see this, observe that $\phi(a + b) - \phi(a) - \phi(b) = \phi''(a, b)$, so $(\phi - \frac{1}{2}\phi'')(a + b) - (\phi - \frac{1}{2}\phi'')(a) - (\phi - \frac{1}{2}\phi'')(b)$ is constant where $(\phi - \frac{1}{2}\phi'')(n)$ denotes $\phi(n) - \frac{1}{2}\phi''(n, n)$. Here, we define $\frac{1}{2}\phi''(n, n)$ so that it takes values in $[-1/20, 1/20]$.
\begin{lemma}[Fourier Complexity of $U^2$-Dual Functions]\thlabel{fourieru2dual}
Let $f(x) = \sum_{y} g(x + y)h(y)$ for $g, h$ functions on $\mathbb{Z}/N\mathbb{Z}$. Then $f$ has Fourier complexity at most $\|g\|_{L^2}\|h\|_{L^2}$.
\end{lemma}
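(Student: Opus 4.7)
The plan is a direct computation via Fourier inversion on $\mathbb{Z}/N\mathbb{Z}$. Using the paper's normalized Fourier transform $\hat g(\xi) = \mathbb{E}_x e(\xi x) g(x)$, I would first compute $\hat f(\xi)$ by interchanging the order of summation and performing the substitution $x' = x + y$ in the inner expectation:
\[
\hat f(\xi) = \mathbb{E}_x e(\xi x)\sum_y g(x+y)h(y) = \sum_y h(y)\,e(-\xi y)\,\hat g(\xi) = N\,\hat g(\xi)\,\hat h(-\xi),
\]
where the factor $N$ is an artifact of the mismatch between the unnormalized sum $\sum_y$ defining $f$ and the normalized Fourier transform used in the paper's conventions.

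By Fourier inversion, I would then write
\[
f(x) = \sum_\xi \hat f(\xi)\,e(-\xi x) = \sum_\xi N\,\hat g(\xi)\,\hat h(-\xi)\,e(-\xi x),
\]
which displays $f$ explicitly as a linear combination of Fourier phases $x \mapsto e(-\xi x)$. By the definition of Fourier complexity given earlier in the paper, it therefore suffices to bound the sum of absolute values of the coefficients, namely $N\sum_\xi |\hat g(\xi)|\,|\hat h(-\xi)|$.

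The final step is to apply Cauchy--Schwarz and then Parseval's identity on $\mathbb{Z}/N\mathbb{Z}$, which in the paper's conventions reads $\sum_\xi |\hat g(\xi)|^2 = \mathbb{E}_x |g(x)|^2 = \|g\|_{L^2}^2/N$, with $\|g\|_{L^2}^2 = \sum_x |g(x)|^2$ the unnormalized $L^2$ norm consistent with the unnormalized convolution defining $f$. This yields
\[
N\sum_\xi |\hat g(\xi)|\,|\hat h(-\xi)| \le N\cdot\frac{\|g\|_{L^2}}{\sqrt N}\cdot\frac{\|h\|_{L^2}}{\sqrt N} = \|g\|_{L^2}\|h\|_{L^2},
\]
as claimed.

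The argument is short and routine; the only potential pitfall is keeping the normalization factors of $N$ consistent across the definition of $f$, the Fourier transform, and Parseval, so that the explicit $N$ in $\hat f(\xi) = N\hat g(\xi)\hat h(-\xi)$ cancels against the $1/N$ coming from Parseval. There is no substantive conceptual obstacle.
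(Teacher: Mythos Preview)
Your proposal is correct and follows essentially the same route as the paper: compute $\hat f$ as a product of $\hat g$ and $\hat h$, then bound $\sum_\xi |\hat f(\xi)|$ via Cauchy--Schwarz and Plancherel. The paper's proof is the one-line version of what you wrote; your extra care with the normalization factors (the stray $N$ from the unnormalized convolution) is a useful clarification that the paper leaves implicit.
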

\begin{proof}
Applying Cauchy-Schwarz and Plancherel's Theorem, we obtain
$$\sum_{\xi \in \widehat{\mathbb{Z}/N\mathbb{Z}}} |f(\xi)| \le \sum_{\xi} |\hat{g}(\xi) \overline{\hat{h}}(\xi)| \le \|g\|_{L^2}\|h\|_{L^2}.$$
\end{proof}
\begin{lemma}\thlabel{locallylinear}
Let $\ell\colon B(S, \rho) \to \mathbb{R}/\mathbb{Z}$ be a locally linear function and let $\epsilon < 1000 |S|$. Then $1_{B(S, \rho)} e(\ell)$ has $L^1$ $\epsilon$-Fourier complexity at most $(\epsilon \rho)^{-O(|S|)}$.
\end{lemma}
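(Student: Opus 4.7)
The plan is to approximate $1_{B(S, \rho)}(n) e(\ell(n))$ by a composition $F(\xi(n))$ where $\xi(n) := (\alpha n)_{\alpha \in S} \in \mathbb{T}^{|S|}$ and $F$ is a tensor-product Lipschitz function on the torus, and then Fourier-expand $F$ coordinate-by-coordinate. Since each character $e(k \cdot x)$ on $\mathbb{T}^{|S|}$ pulls back via $\xi$ to the honest character $n \mapsto e((k \cdot \alpha) n)$ on $\mathbb{Z}/N\mathbb{Z}$, such a decomposition of $F$ translates directly into an $L^1$ Fourier decomposition of $1_{B(S, \rho)} e(\ell)$ with the same $\ell^1$ coefficient mass.

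The main step is the construction of the ``linear part" of $F$: a linear map $L(x) = c \cdot x$ on $\mathbb{R}^{|S|}$ with $|c| \le O(1/\rho)^{O(1)}$ such that $L(\xi(n)) \equiv \ell(n) \pmod 1$ on most of $B(S, \rho)$. I would apply Minkowski's second theorem to the image of $\xi$ inside $[-\rho/C_0, \rho/C_0]^{|S|}$ (with $C_0 = O(|S|)$) to produce $n_1, \ldots, n_{|S|} \in \mathbb{Z}/N\mathbb{Z}$ whose images $\xi(n_i)$ form a well-conditioned basis in the sense that the inverse of the matrix $M$ with rows $\xi(n_i)$ has operator norm $O(1/\rho)^{O(1)}$, and such that integer combinations with bounded coefficients exhaust a density $1 - O(|S|\epsilon')$ subset of $B(S, \rho(1 - \epsilon'))$ for $\epsilon' := \epsilon/(C|S|)$. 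Local linearity of $\ell$ then forces $\ell(\sum_i m_i n_i) \equiv \sum_i m_i \ell(n_i) \pmod 1$ on the integer span (since intermediate partial sums stay in $B(S, \rho)$), and inverting the system $c \cdot \xi(n_i) = \ell(n_i) \pmod 1$ yields a $c$ with the required size bound.

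Next, I would take a tensor-product bump $\chi(x) = \prod_{i=1}^{|S|} \chi_i(x_i)$ where each $\chi_i \colon \mathbb{T} \to [0, 1]$ is supported in $[-\rho, \rho]$, equals $1$ on $[-\rho(1 - \epsilon'), \rho(1 - \epsilon')]$, and has Lipschitz norm $O(1/(\epsilon' \rho))$, and set $F(x) := \chi(x) e(L(x)) = \prod_i \chi_i(x_i) e(c_i x_i)$. Regularity of $B(S, \rho)$ combined with the previous step gives $\mathbb{E}_n |F(\xi(n)) - 1_{B(S, \rho)}(n) e(\ell(n))| \le \epsilon/2$. Applying \thref{FourierExpansion} in one dimension to each factor $\chi_i(x_i) e(c_i x_i)$ with tolerance $\epsilon/(100 |S|)$ yields $\sum_j a_{i, j} e(k_{i, j} x_i) + g_i$ with $\sum_j |a_{i, j}| \le O(1/(\epsilon \rho))^{O(1)}$ and $\|g_i\|_\infty$ tiny. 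Multiplying out across $i$ produces an $L^\infty$ expansion of $F$ with $\ell^1$ coefficient mass at most $(\epsilon \rho)^{-O(|S|)}$ and total $L^\infty$ error at most $\epsilon/2$; substituting $x = \xi(n)$ completes the argument.

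The main obstacle is the Minkowski step producing a simultaneously well-conditioned basis $n_1, \ldots, n_{|S|}$ whose integer combinations cover most of $B(S, \rho)$, so that local linearity of $\ell$ propagates to a globally defined $c \in \mathbb{R}^{|S|}$; this is standard Bohr-set geometry, analogous to the constructions in \cite[Appendix A]{GT4} and \cite[Appendix E]{GTZ2}. A secondary point worth emphasizing is that the tensor-product factorization of $F$ is essential for obtaining the $(\epsilon \rho)^{-O(|S|)}$ bound in the exponent: applying \thref{FourierExpansion} directly to $F$ in $|S|$ dimensions would yield only the inferior $(\epsilon \rho)^{-O(|S|^2)}$ bound.
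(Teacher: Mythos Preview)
Your approach is genuinely different from the paper's and considerably more involved. The paper's proof is essentially two lines: local linearity gives $\ell(x+y)-\ell(y)=\ell(x)$ whenever $x\in B(S,(1-\epsilon/|S|)\rho)$ and $y\in B(S,\epsilon\rho/|S|)$, so by regularity
\[
1_{B(S,\rho)}(x)\,e(\ell(x)) \;=\; \mathbb{E}_{y \in B(S,\epsilon\rho/|S|)}\Bigl[1_{B(S,(1-\epsilon/|S|)\rho)}(x+y)\,e(\ell(x+y))\Bigr]\cdot\Bigl[1_{B(S,\epsilon\rho/|S|)}(y)\,e(-\ell(y))\Bigr] + O_{L^1}(\epsilon),
\]
which is a convolution, and \thref{fourieru2dual} (Cauchy--Schwarz plus Plancherel) bounds the Fourier $\ell^1$-mass by an $L^2\times L^2$ quantity working out to $(\epsilon\rho)^{-O(|S|)}$. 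No geometry of numbers enters at all.

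Your argument has a genuine gap at the Minkowski step. You claim one can choose $n_1,\ldots,n_{|S|}$ so that simultaneously (i) $\|M^{-1}\|=O(1/\rho)^{O(1)}$ and (ii) integer combinations exhaust a $1-O(|S|\epsilon')$ fraction of $B(S,\rho(1-\epsilon'))$. These requirements are in tension: (ii) forces the $\xi(n_i)$ to generate essentially the full lattice $\xi(\mathbb{Z}/N\mathbb{Z})+\mathbb{Z}^{|S|}$, which has covolume $\asymp 1/N$, whence $|\det M|\asymp 1/N$ and $\|M^{-1}\|$ is in general at least a power of $N$, not of $1/\rho$. Already in the one-dimensional case $S=\{1/N\}$ the only choice of $n_1$ giving full coverage of $B(S,\rho)=\{|n|<\rho N\}$ is $n_1=1$, and then $c=N\tilde\ell(1)$ has size of order $N$. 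The Bohr-set coordinate systems in the references you cite produce bases with $\|a_i\|_S\le 1/N_i$ and $|n_i|\ll N_i\|a\|_S$, but the individual $N_i$ can be as large as $N$.

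The route is salvageable by an observation you did not make: for any $c\in\mathbb{R}^{|S|}$, writing $c=\lfloor c\rfloor+\{c\}$ componentwise, the factor $e(\lfloor c\rfloor\cdot x)$ is an honest character of $\mathbb{T}^{|S|}$, so the $\ell^1$ Fourier mass of $\chi(x)e(c\cdot x)$ equals that of $\chi(x)e(\{c\}\cdot x)$, whose one-dimensional factors $\chi_i(x_i)e(\{c_i\}x_i)$ now have Lipschitz norm $O(|S|/(\epsilon\rho))$ regardless of $|c|$. With that fix your tensor-product expansion does give $(\epsilon\rho)^{-O(|S|)}$; without it, the Lipschitz constant of $F$ is uncontrolled and the bound fails.
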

\begin{proof}
Notice that $\ell(x + y) - \ell(y) = \ell(x)$ whenever $x \in B(S, (1 - \epsilon/|S|) \rho)$ and $y \in B(S, \epsilon \rho/|S|)$. In addition, we may write
$$1_{B(S, \rho)}(x) = \mathbb{E}_{y \in B(S, \epsilon \rho/|S|)} 1_{B(S, \rho (1 - \epsilon/|S|))}(x + y)1_{B(S, \epsilon \rho/|S|)}(y) + O_{L^1}(\epsilon).$$
The lemma then follows from \thref{fourieru2dual}.
\end{proof}
Finally, we will state the below lemma, which will be relevant for Section 8:
\begin{lemma}[Fourier Complexity of Convex Sets]\thlabel{fourierconvex}
Let $K \subseteq [-N, N]^d$ be a convex set, which is the restriction of a convex set in $\mathbb{R}^d$ supported in the continuous region $[-N, N]^d$ (note that we are breaking our convention in Section 2 that $[-N, N]$ is discrete). Then we may write $1_K$ is a $L^1$ $\epsilon$-approximate nilsequence in $(\mathbb{Z}/4N\mathbb{Z})^d$ of dimension $d$ and Lipschitz norm $O_d(1/\epsilon)$.
\end{lemma}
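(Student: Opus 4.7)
The strategy is to mollify $1_K$ in the continuous setting and then evaluate on the lattice $n \mapsto n/(4N)$, which embeds $(\mathbb{Z}/4N\mathbb{Z})^d$ into the torus $\mathbb{T}^d := (\mathbb{R}/\mathbb{Z})^d$. Under this embedding the rescaled convex body $K' := K/(4N)$ sits inside $[-1/4,1/4]^d$ and satisfies $1_K(n) = 1_{K'}(n/(4N))$. Fix a smooth nonnegative bump $\rho\colon\mathbb{R}^d \to \mathbb{R}$ supported in the unit ball with $\int\rho = 1$, set $\rho_\eta(x) := \eta^{-d}\rho(x/\eta)$, and define
$$\varphi := 1_{K'} * \rho_\eta \colon \mathbb{T}^d \to [0,1]$$
for a parameter $\eta$ to be chosen. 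Because $\rho_\eta$ is supported in a ball of radius $\eta < 1/2$, the toral convolution coincides with the Euclidean one, and $\|\nabla\rho_\eta\|_{L^1(\mathbb{R}^d)} = O(1/\eta)$ together with $\|1_{K'}\|_{L^\infty} \le 1$ gives $\|\varphi\|_{Lip(\mathbb{T}^d)} = O(1/\eta)$.

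Next I control the approximation error. The function $\varphi - 1_{K'}$ is supported in the open $\eta$-neighbourhood of $\partial K'$ and is bounded there by $1$. Since $K'$ is a convex body contained in the cube $[-1/4,1/4]^d$, Cauchy's surface-area formula (expressing the surface area as a dimensional constant times the mean $(d-1)$-volume of the orthogonal projections of $K'$, each of which is contained in the corresponding projection of the cube) yields $\mathcal{H}^{d-1}(\partial K') = O_d(1)$, and hence the $\eta$-neighbourhood of $\partial K'$ has Lebesgue measure $O_d(\eta)$ for $\eta \le 1$. To transfer this to the discrete $L^1$ norm on $(\mathbb{Z}/4N\mathbb{Z})^d$, I use the routine lattice-counting bound
$$\#\{n \in (\mathbb{Z}/4N\mathbb{Z})^d : n/(4N) \in A\} \le (4N)^d \,\mathrm{vol}\!\left(A + [0,1/(4N)]^d\right),$$
valid for every measurable $A \subseteq \mathbb{T}^d$ because the translated cubes $n/(4N) + [0,1/(4N)]^d$ are pairwise disjoint and contained in $A + [0,1/(4N)]^d$. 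Taking $A = \partial K' + B(0,\eta)$ and using $|\varphi - 1_{K'}| \le 1$ yields
$$\mathbb{E}_{n \in (\mathbb{Z}/4N\mathbb{Z})^d} |\varphi(n/(4N)) - 1_K(n)| \le \mathrm{vol}\!\left(\partial K' + B(0,\eta + \sqrt{d}/(4N))\right) = O_d(\eta + 1/N).$$

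Choosing $\eta := c_d\,\epsilon$ for a sufficiently small dimensional constant $c_d$ (and operating in the natural regime $\epsilon \gtrsim 1/N$ in which the stated bound is meaningful) produces a Lipschitz function $\varphi\colon\mathbb{T}^d \to \mathbb{C}$ with $\|\varphi\|_{Lip(\mathbb{T}^d)} = O_d(1/\epsilon)$, identified as a degree one nilsequence on $(\mathbb{Z}/4N\mathbb{Z})^d$ of dimension $d$ via the homomorphism $n \mapsto n/(4N)$, and with discrete $L^1$ error at most $\epsilon$. The only non-routine input is the surface-area bound for convex bodies in a cube, which is classical. I expect the main piece of bookkeeping to be the lattice-counting step, which forces $\eta$ to be at least of order $1/N$; this is automatic in the regime of interest and does not appear to present a genuine obstacle.
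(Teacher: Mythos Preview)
Your argument is correct and is precisely the standard mollification argument; the paper itself does not give a proof but simply cites \cite[Corollary A.3]{GT1}, whose content is exactly what you have written out. Your treatment of the lattice-counting step and the convex surface-area bound is clean and matches what that reference does.
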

\begin{proof}
See \cite[Corollary A.3]{GT1}.
\end{proof}
\section{Reducing to Equidistribution Estimates}
In this section, we shall turn the condition 
$$\|\mu - \mu_{Siegel}\|_{U^3([N])} \ge \delta$$
and 
$$\|\Lambda - \Lambda_{Siegel}\|_{U^3([N])} \ge \delta$$
into a condition that an oscillatory sum is large, ultimately arriving at the hypotheses of \thref{type1type2}. In order to do so, we must first apply the $U^3$ inverse theorem, a task that is done in Section 4.1, and to decompose $\mu, \mu_{Siegel}, \Lambda, \Lambda_{Siegel}$ into type I sums and type II sums, a task that is done in Section 4.2. Informally, the purpose of doing so is to turn a single sum into a double sum or a triple sum. This gives us freedom to swap orders of summation and to Cauchy-Schwarz away undesirable terms while still being able to exploit cancellation in phase. In contrast, with a single sum, ``Cauchy-Schwarzing" away undesirable terms is still possible, but one cannot hope to exploit any cancellation in phase once we do so. 
\subsection{Applying the $U^3$ inverse theorem}
Let $\delta > 0$ with $\delta \ll_A \log^{-A}(N)$ for some large power $A$. Suppose
$$\|\mu - \mu_{Siegel}\|_{U^3([N])} \ge \delta$$
or
$$\|\Lambda - \Lambda_{Siegel}\|_{U^3([N])} \ge \delta.$$
First, we find some prime $P \in [100N, 200N]$, such that by extending $\mu$ and $\mu_{Siegel}$ and $\Lambda$ and $\Lambda_{Siegel}$ to be zero outside $[N]$, we have
$$\|\mu - \mu_{Siegel}\|_{U^3(\mathbb{Z}/P\mathbb{Z})} \gg \delta$$
$$\|\Lambda - \Lambda_{Siegel}\|_{U^3(\mathbb{Z}/P\mathbb{Z})} \gg \delta^{O(1)}.$$
Then by \thref{sandersinversetheorem}, there exists a Bohr set $B(S, \rho)$ and a locally quadratic form $\phi\colon B(S, 64 \rho) \to \mathbb{R}/\mathbb{Z}$ and a translation $h \in [P]$ such that
$$|\mathbb{E}_{n \in B(S, \rho)} (\mu - \mu_{Siegel})(n + h) e(\phi(n))| \gg \delta^{O(1)}$$
where $|S| \le O(\log(1/\delta))^{O(1)}$ and $\rho \ge \exp(-O(\log^{O(1)}(1/\delta)))$ or that
$$|\mathbb{E}_{n \in B(S, \rho)} (\Lambda - \Lambda_{Siegel})(n + h) e(\phi(n))| \gg (\delta/\log(N))^{O(1)}$$
with $|S| \le O(\log(\log(N)/\delta))^{O(1)}$ and $\rho \ge \exp(-O(\log(\log(N)/\delta)^{O(1)}))$ (see the discussion near the end of Section 2 for how we can reduce to $\phi$ being defined on $B(S, 64\rho)$ instead of just $B(S, \rho)$).
For simplicity, we shall often assume below that $h = 0$, for all of our estimates work for any translation of a Bohr set. We do note that the bilinear form $$\phi''(a, b):= \phi(h + a + b) - \phi(h + a) - \phi(h + b) + \phi(h)$$
associated to $\phi$ is defined on $B(S, \rho) \times B(S, \rho)$ even if $\phi$ were defined on a translation of a Bohr set $h + B(S, 64\rho)$ and satisfies
$$\phi''(a, b) = \phi(n + a + b) - \phi(n + a) - \phi(n + b) + \phi(n)$$
for $n \in h + B(S, \rho)$. To prove \thref{maintheorem}, it suffices to show that $\delta$ cannot be larger than $\exp(-O(\log^{1/C}(N)))$ for some $C > 0$. For technical reasons, we shall work with a smooth cutoff $\psi$ instead of $1_B$. Thus, we will choose a smooth cutoff $\psi$ that realizes $1_B$ as a $\delta^{O(|S|)^{O(1)}}$-approximate nilsequence. We shall also work with the variable ``$N$" instead of the variable ``$P$."
\subsection{Type I, Twisted Type I, and Type II Sums}
A \textit{type I sum} is a sum of the form
$$n \mapsto \sum_{d \le R} a_d 1_{d|n}  1_{[N']}(n)$$
where $a_n \ll \tau(n)^k$ for some absolute constant $k$, $R \ge \exp(O(\log^{1/C}(N)))$ is a parameter that is at least quasi-polynomial in $N$, and $\tau(n)$ is the number of divisors of $n$. A \textit{twisted type I sum} is a sum of the form
$$n \mapsto \sum_{d \le R} a_d 1_{d|n} \chi_{Siegel}(n/d) 1_{[N']}(n).$$
Note that if $c$ is a type I sum, and if
$$|\mathbb{E}_{n \in [N]} f(n) c(n)| \ge \delta$$
then
$$\left|\mathbb{E}_{n} \sum_{d \le R} f(n)a_d1_{d | n} 1_{[N']}(n)\right| \ge \delta.$$
The point of these type I sums is that we may interchange sums and use the triangle inequality:
$$\sum_{d \le R} \tau(d)^k \left|\sum_{n \in [N'], d | n} f(n) \right| \gg \delta N.$$
Since we have divisor moment estimates
$$\sum_{d \le R} \frac{\tau(d)^k}{d} \ll \log^{O(1)}(N)$$
it follows from Cauchy-Schwarz that
$$\sum_{d \le R} d \left|\sum_{n \in [N'], d | n} f(n) \right|^2 \gg \frac{\delta^{2} N^2}{\log^{O(1)}(N)}.$$
By dyadic pigeonholing, it follows that there exists $D \le R/2$ such that for $\frac{\delta D}{\log^{O(1)}(N)}$ many $d \in [D, 2D]$, we have
$$\left|\sum_{n \in [N'], d | n} f(n)\right| \gg \frac{\delta N}{D \log^{O(1)}(N)}.$$
Thus, if $f$ were an oscillatory, one turns a correlation with $a$ into an oscillatory sum for which equidistribution theories can apply. A similar calculation occurs for twisted type I sums, obtaining
$$\left|\sum_{n \in [N'], d|n} f(n) \chi(n/d)\right| \gg \frac{\delta N}{D\log^{O(1)}(N)}$$
with $d \in [D, 2D] \le R/2$. \\\\
A \textit{type II sum} is a sum of the form
$$n \mapsto \sum_{d, w > \sqrt{R}, dw = n} a_d b_w.$$
where $a_d, b_w \ll \tau(n)^k$ for some absolute $k$ and as above, $R$ is a parameter that is at least quasi-polynomial in $N$. Now suppose $c$ is a type II sum and that
$$\left|\sum_{n \in [\delta N, N]} f(n)c(n)\right| \ge \delta N.$$
The reason we opt to work with $[\delta N, N]$ will be clearer later in the argument. In hypotheses such as
$$\left|\sum_{n \in [N]} f(n) c(n)\right| \ge \delta N$$
with $c$ either $\mu, \mu_{Siegel}, \Lambda, \Lambda_{Siegel}$, we may, at the cost of shrinking $\delta$ to $\delta^2$, replace $[N]$ with $[\delta N, N]$. Since $\delta \ll_A \log^{-A}(N)$, all of the contribution of the sum for $n \le \delta^2N$ will be smaller than $\delta N$. Substituting the type II sum, we obtain
$$\left|\sum_{n \in [\delta N, N]} \sum_{d, w > \sqrt{R}, dw = n} a_d b_w f(n)\right| \ge \delta N.$$
The point is that we may use Cauchy-Schwarz to turn this type II sum into a type I sum which can be handled similarly as above. Using Cauchy-Schwarz and the divisor moment bound
$$\sum_{d \in [\sqrt{R}, N/\sqrt{R}]} \tau(d)^k \left|\sum_{w \in [\delta N/d, N/d]} f(n)\right| \gg \delta N,$$
we obtain
$$\sum_{\sqrt{R} < d < N/\sqrt{R}}  d \left|\sum_{\delta N/d \le w \le N/d} b_w f(dw)\right|^2 \gg \frac{\delta^2 N^2}{\log^{O(1)}(N)}.$$
Using dyadic pigeonholing, and divisor bounds once more, we find that there exists $D \in [\sqrt{R}, N/\sqrt{R}]$ and $W$ such that
$$\sum_{d \in [D/2, D]} \left|\sum_{w \in [W/2, W]} 1_{[\delta N/d, N/d]}(w)b_w f(dw)\right|^2 \gg \frac{\delta^2 N^2}{D\log^{O(1)}(N)}$$
and $\delta/4 N \le DW \le N$. Fourier approximating $1_{[\delta N/d, N/d]}$ via \thref{FourierExpansion}, and expanding, we find that
$$\sum_{d \in [D/2, D]} \sum_{w, w' \in [W/2, W]} e(\alpha (w - w')) b_w\overline{b_{w'}} f(dw) \overline{f(dw')} \gg \frac{\delta^{O(1)} N^2}{D\log^{O(1)}(N)}.$$
Thus
$$\mathbb{E}_{d \in [D/2, D]} \mathbb{E}_{w, w' \in [W/2, W]} b(w, w') f(dw) \overline{f(dw')} \gg \frac{\delta^{O(1)}}{\log^{O(1)}(N)}$$
for some divisor power bounded function $b$. Thus, by using Cauchy-Schwarz in $d$, we obtain
$$\sum_{d, d' \in [D/2, D]} \sum_{w, w' \in [W/2, W]} f(dw)\overline{f(d'w)f(dw')}f(d'w') \gg \frac{\delta^{O(1)}N^2}{\log^{O(1)}(N)}.$$
If $f$ were an oscillatory phase, this procedure turns a correlation of a type II sum and an oscillatory phase into an oscillatory sum. We have thus proved the following:
\begin{proposition}\thlabel{decomposition}
Let $c(n)$ be a type I sum, $c_1(n)$ be a twisted type I sum and $c_2(n)$ be a type II sum. Suppose $f\colon [N] \to \mathbb{C}$ is a one-bounded function.
\begin{itemize}
    \item Suppose $\left|\sum_{n \in [N]} f(n)c(n)\right| \ge \delta N$. Then there exists $N' \le N$, $D \le R$ such that for $\delta \log^{-O(1)}(N)D$ many elements in $[D/2, D]$ such that
    $$\left|\sum_{n \in [N'/d]} f(dn)\right| \gg \frac{\delta N}{\log^{O(1)}(N)}.$$
    \item Suppose $\left|\sum_{n \in [N]} f(n)c_1(n)\right| \ge \delta N $. Then there exists $N' \le N$, $D \le R$ such that for $\delta \log^{-O(1)}(N)D$ many elements in $[D/2, D]$ such that
    $$\left|\sum_{n \in [N'/d]} f(dn)\chi(n)\right| \gg \frac{\delta N}{\log^{O(1)}(N)}.$$
    \item Suppose $\left|\sum_{n \in [\delta N, N]} f(n)c_2(n)\right| \ge \delta N$. Then there exists $\sqrt{R} \le D \le N/\sqrt{R}$ and $W \in [\delta N/D, N/D]$ such that
    $$\sum_{d, d' \in [D/2, D]} \sum_{w, w' \in [W/2, W]} f(dw)\overline{f(d'w)f(dw')}f(d'w') \gg \frac{\delta^{O(1)}N^2}{\log^{O(1)}(N)}.$$
\end{itemize}
\end{proposition}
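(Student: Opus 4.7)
The plan is to unwind each of the three hypotheses by substituting the definitions of the type I, twisted type I, and type II sums, swapping orders of summation, and then combining the divisor moment bound $\sum_{d \le R} \tau(d)^{2k}/d \ll \log^{O(1)}(N)$ with Cauchy--Schwarz and dyadic pigeonholing. The proof is essentially a careful packaging of the heuristic already sketched in Section~4.2; no deep analytic input is needed here, and the resulting losses of $\delta^{O(1)}/\log^{O(1)}(N)$ are acceptable for the later equidistribution estimates.

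For the type I sum $c(n) = \sum_{d \le R} a_d\, 1_{d \mid n}\, 1_{[N']}(n)$, I would substitute, swap the $d$- and $n$-summations, and pass to absolute values using $|a_d| \ll \tau(d)^k$ to arrive at $\sum_{d \le R} \tau(d)^k \bigl|\sum_{n \in [N'],\, d \mid n} f(n)\bigr| \gg \delta N$. Cauchy--Schwarz against the weight $1/d$, combined with the divisor-moment estimate, then yields $\sum_d d\, |S_d|^2 \gg \delta^2 N^2/\log^{O(1)}(N)$, where $S_d := \sum_{n \in [N'],\, d \mid n} f(n)$. Since $|f| \le 1$ gives the trivial bound $|S_d| \le N/d$, a dyadic pigeonhole in $d$ produces a scale $D \le R$ on which $\gg \delta \log^{-O(1)}(N) \cdot D$ values of $d \in [D/2, D]$ satisfy the claimed inner-sum lower bound, after the change of variables $n = dm$. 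The twisted type I case is identical: the factor $\chi_{Siegel}(n/d)$ rides inside the absolute value throughout and becomes $\chi(m)$ after the same change of variables.

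For the type II case the additional steps are the truncation to $[\delta N, N]$ (justified in Section~4.2 by absorbing the contribution of $n \le \delta^2 N$ into the error) and the removal of the $d$-dependent cutoff on $w$. Substituting $c_2(n) = \sum_{dw = n,\, d, w > \sqrt{R}} a_d b_w$ and running the same Cauchy--Schwarz/pigeonhole argument in $d$ yields scales $D \in [\sqrt{R}, N/\sqrt{R}]$ and $W \in [\delta N/D, N/D]$ together with the inequality
$$\sum_{d \in [D/2, D]} \left|\sum_{w \in [W/2, W]} 1_{[\delta N/d,\, N/d]}(w)\, b_w f(dw)\right|^2 \gg \frac{\delta^2 N^2}{D \log^{O(1)}(N)}.$$
Opening the square gives a double sum over $(w, w')$; I would invoke Lemma~\thref{FourierExpansion} to approximate each cutoff by a short linear combination of characters plus a small $L^\infty$ error, and after a pigeonhole over the Fourier frequencies the two cutoffs collapse into a single phase $e(\alpha(w - w'))$ with bounded coefficients $b(w, w') := b_w \overline{b_{w'}}\, e(\alpha(w - w'))$. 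A final Cauchy--Schwarz in $d$, doubling $d \mapsto (d, d')$, eliminates $b(w, w')$ (which is independent of $d$) and produces the quartic sum $\sum_{d, d'} \sum_{w, w'} f(dw)\overline{f(d'w) f(dw')} f(d'w')$ in the statement.

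The only mildly delicate point is the uniform-in-$d$ handling of the Fourier approximation of $1_{[\delta N/d,\, N/d]}(w)$, whose endpoints slide with $d$; this is harmless because Lemma~\thref{FourierExpansion} needs to be invoked only within a single dyadic block in $d$, where the Lipschitz norm of a smoothed cutoff is uniformly controlled, and the pigeonholed frequency $\alpha$ may depend on $d$ without affecting the remaining Cauchy--Schwarz in $d$. Beyond this, the entire argument is routine bookkeeping---divisor-moment estimates, two applications of Cauchy--Schwarz, and a dyadic pigeonhole---and all losses are of the acceptable form $\delta^{O(1)}/\log^{O(1)}(N)$.
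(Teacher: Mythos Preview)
Your proposal is correct and follows the paper's argument essentially verbatim: substitute the definitions, use divisor moments plus Cauchy--Schwarz, dyadically pigeonhole, and in the type II case Fourier-expand the sliding cutoff before a second Cauchy--Schwarz doubles $d$ to the quartic sum. One small correction to your final paragraph: if the pigeonholed frequency $\alpha$ were genuinely allowed to depend on $d$, the quartic sum would pick up an extra phase $e\bigl((\alpha_d-\alpha_{d'})(w-w')\bigr)$ not present in the statement; the clean route (implicit in the paper) is to Fourier-expand $1_{[\delta N,\,N]}$ as a function of the product variable $dw$ so that both coefficients and frequencies are $d$-independent, and then absorb the single resulting phase $e(\xi\, dw)$ into a modified one-bounded function $\tilde f(n)=e(\xi n)f(n)$---harmless downstream, since only $\phi''$ is ever used.
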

In practice, we shall assume that $\delta$ is quasi-polynomial in $N$, so it is much smaller than $\log^{O(1)}(N)$. As a consequence, we may assume that $N' \ge \delta^{O(1)}N$. It is a classical result in analytic number theory that we may decompose the M\"obius function $\mu$ and the von Mangoldt function $\Lambda$ as a combination of type I and type II sums. Tao and Ter\"av\"ainen \cite[Proposition 7.2]{TT} observe that we may decompose $\mu_{Siegel}$ and $\Lambda_{Siegel}$ into a convex combination of type I sums and twisted type I sums plus a negligible error which is quasi-polynomial in size. We record their result below (we note that analogous decompositions for $\mu$ and $\Lambda$ are due to Vaughan, a proof of which can be found in \cite[Chapter 3]{V}):
\begin{lemma}
The functions $\mu, \Lambda$, and $\Lambda_Q$ can be decomposed into a convex combination of type I sums with $R = \exp(O(\log^{1/2}(N)))$, and type II sums up to an $L^1([N])$ error of size at most $\exp(-O(\log^{1/3}(N)))$. The functions $\mu_{Siegel}$ and $\Lambda_{Siegel}$ can be decomposed into a convex combination of type I sums, twisted type I sums with $R = \exp(O(\log^{1/2}(N)))$, and type II sums up to an $L^1([N])$ error of size at most $\exp(-O(\log^{1/3}(N)))$.
\end{lemma}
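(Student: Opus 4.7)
The plan is to handle the three classical factors $\mu$, $\Lambda$, and $\Lambda_Q$ by hand using Vaughan's identity and M\"obius inversion, and to invoke \cite[Proposition 7.2]{TT} for the Siegel-modified factors $\mu_{Siegel}$ and $\Lambda_{Siegel}$. All five statements have the same flavor, namely, a convex combination of type I (and twisted type I) sums plus a type II sum with a negligible $L^1$ error, so each reduces to a uniform small-primes/large-primes dyadic splitting of an underlying Dirichlet convolution.

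For $\Lambda$ and $\mu$, I would apply Vaughan's identity as in \cite[Chapter 3]{V} with the parameter choice $U = V = \exp(\tfrac{1}{2}\log^{1/2}(N))$, so that $R := UV = \exp(O(\log^{1/2}(N)))$ matches the claimed threshold. Vaughan's identity expresses $\Lambda(n)$ (for $n > V$) as a signed sum of two terms of the form $\sum_{d \leq R} a_d\, 1_{d \mid n}$ with $a_d \ll \tau(d) \log N$, plus a trilinear term $\sum_{n = m k \ell,\, m > U,\, k > V} \mu(m)\Lambda(k)$ which I would reorganize as a type II sum by dyadically decomposing the ranges of $m$ and $k$. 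This dyadic decomposition costs only $O(\log^{2} N)$ terms while preserving the divisor-power pointwise bound on the coefficients. The one genuine truncation is the cutoff at $n \leq V$, whose $L^{1}([N])$ norm is $O(V/N) = \exp(-(1 - o(1))\log(N))$, comfortably dominated by $\exp(-O(\log^{1/3}(N)))$. The argument for $\mu$ is the analogous Vaughan decomposition recorded in the same reference.

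For $\Lambda_Q = \tfrac{P(Q)}{\phi(P(Q))}\, 1_{(n, P(Q)) = 1}$, I would apply M\"obius inversion to write $1_{(n,P(Q))=1} = \sum_{d \mid (n,P(Q))} \mu(d)$ and split the divisor sum at the threshold $QR$. The contribution from $d \leq QR$ is immediately a type I sum with $|a_d| \leq 1$, and $QR = \exp(O(\log^{1/2}(N)))$ still lies within the allowed range. For $d > QR$ with $d \mid P(Q)$, the number $d$ is squarefree with every prime factor strictly less than $Q$, so a greedy regrouping of the prime factors of $d$ produces a factorization $d = d_1 d_2$ with $\sqrt{R} < d_1 \leq Q\sqrt{R}$ and consequently $d_2 > \sqrt{R}$. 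The number of such greedy representations of a given $d$ is at most $\tau(d)$, so after dividing by the multiplicity one obtains a type II sum with coefficients bounded by $\tau(n)^{O(1)}$, as required.

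Finally, for $\mu_{Siegel}$ and $\Lambda_{Siegel}$, I would invoke \cite[Proposition 7.2]{TT} directly. These are, by construction, Dirichlet convolutions of a $P(Q)$-smooth-supported M\"obius factor with a Siegel-character twisted factor, and the same small-side/large-side splitting of the convolution at the threshold $\sqrt{R}$ produces the required convex combination of type I sums, twisted type I sums (picking up the factor $\chi_{Siegel}(n/d)$ from the Siegel side), and a type II sum, with the convolution-tail error controlled directly by the smoothness estimates in \cite{TT}. The main technical obstacle in all of this is the bookkeeping in the $\Lambda_Q$ step: one must verify that the greedy splitting is always available (which uses crucially that every prime dividing $P(Q)$ is at most $Q \ll \sqrt{R}$) and that the resulting overcounting stays divisor-bounded; the Vaughan identity for $\mu$, $\Lambda$ and the Siegel decomposition are then essentially routine citations.
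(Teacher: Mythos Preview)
Your proposal is correct and follows exactly the approach indicated in the paper, which does not give its own proof of this lemma but simply records it as a citation: Vaughan's identity from \cite[Chapter~3]{V} for $\mu$ and $\Lambda$, and \cite[Proposition~7.2]{TT} for $\mu_{Siegel}$ and $\Lambda_{Siegel}$. Your sketch supplies more detail than the paper does---in particular, the paper does not spell out the $\Lambda_Q$ case separately, whereas your M\"obius-inversion-plus-greedy-factorization argument (using $Q \ll \sqrt{R}$) is a clean way to handle it and is essentially the argument implicit in \cite{TT}.
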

This doesn't give a desirable twisted type I contribution because $R = \exp(O(\log^{1/2}(N)))$ is too large to work with in our case. We wish to prove this for some smaller value of $R_1 = \exp(O(\log^{1/C}(N)))$. However, if $D \ge R_1/2$, truncating $[N]$ so that it is $[\delta N, N]$, then we see the twisted type I case ends up as
$$\left|\sum_{d \in [D/2, D]} \sum_{w, w' \in [\delta N/(2D), N/D]} f(dw) \overline{f(dw')} \chi(w)\overline{\chi}(w')1_{[N']}(dw)1_{[N']}(dw')\right| \ge \frac{\delta^2 N^2}{D\log^{O(1)}(N)}.$$
Dyadically decomposing in $w$, we see that there exists $W$ with $\delta N/4 \le DW \le N$ such that
$$\left|\sum_{d \in [D/2, D]} \sum_{w, w' \in [W/2, W]} f(dw) \overline{f(dw')}\chi(w)\overline{\chi}(w')1_{[N']}(dw)1_{[N']}(dw')\right| \ge \frac{\delta^2N^2}{D \log^{O(1)}(N)}.$$
This situation resembles exactly the type II case now. Arguing as in the type II case (e.g., Fourier expanding $1_{[N'/d]}(w)$ and using Cauchy-Schwarz in $d$), we obtain a type II hypothesis
$$\left|\sum_{d, d' \in [D/2, D]} \sum_{w, w' \in [W/2, W]} f(dw) \overline{f(dw')f(d'w)}f(d'w')\right| \ge \frac{\delta^{O(1)}N^2}{\log^{O(1)}(N)}.$$
Thus, we have converted a type I hypothesis to a type II hypothesis.

\subsection{Minor Arc Estimates for Locally Quadratic Forms}
We now reduce to the case of $q_{Siegel}$ is small. We will need the following lemma:
\begin{lemma}
Let $\phi\colon B(S, 64\rho) \to \mathbb{R}/\mathbb{Z}$ be a locally quadratic form. Then 
$$1_B \phi(x) = \frac{1}{|B_\epsilon|^3}\mathbb{E}_{h_1, h_2, h_3 \in G} 1_B(x + h_1 + h_2 + h_3)1_B(x + h_1 + h_2)1_B(x + h_2 + h_3)1_B(x + h_1 + h_3)$$
$$1_{B_\epsilon}(x + h_1) 1_{B_\epsilon}(x + h_2)1_{B_\epsilon}(x + h_3) \prod_{\omega \in \{0, 1\}^3 \setminus \{0^3\}} e(\phi(x + h \cdot \omega)) + O_{L^1}(\epsilon |S|)$$
where $B = B(S, \rho)$ and $B_\epsilon := B(S, \epsilon' \rho)$ with $\epsilon' \in [\epsilon/2, \epsilon]$ is chosen so that $B_{\epsilon}$ is regular.
\end{lemma}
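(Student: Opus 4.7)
The plan is to combine two ingredients: the vanishing of the third discrete derivative of the locally quadratic form $\phi$, and Bourgain's regularity of Bohr sets, which together let us identify the averaged right-hand side with $1_B(x)\,e(\phi(x))$ up to a regularity loss of $O(|S|\epsilon)$ in $L^1$. The argument splits into three steps.

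First, since $\phi$ is locally quadratic, $\partial_{h_1,h_2,h_3}\phi(x) = 0$ holds whenever every point $x + \omega \cdot h$, $\omega \in \{0,1\}^3$, lies in $B(S,16\rho)$. Exponentiating and rearranging this vanishing third derivative gives the pointwise identity
$$e(\phi(x)) = \prod_{\omega \neq 0^3} C^{|\omega|+1}\, e\bigl(\phi(x + \omega \cdot h)\bigr),$$
where $C$ denotes complex conjugation (matching, with the Gowers-norm sign convention of Section~2, the product displayed in the lemma). Hence, as soon as all seven cutoffs in the integrand of the right-hand side equal one, the oscillatory part of the integrand collapses to $e(\phi(x))$.

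Next, I substitute $b_i := x + h_i$: the three $1_{B_\epsilon}$ factors then pin $b_i \in B_\epsilon$, the prefactor $1/|B_\epsilon|^3$ turns the constrained sum into a genuine average over $b_i \in B_\epsilon$, and the remaining cutoffs become $1_B(b_i + b_j - x)$ and $1_B(b_1 + b_2 + b_3 - 2x)$. The telescoping identity $x = (x+h_i) + (x+h_j) - (x+h_i+h_j)$ forces $\|x\|_S \le (1 + 2\epsilon)\rho$ on the support of the cutoffs, so both sides of the lemma vanish off $B(S, (1+2\epsilon)\rho)$. On a suitably shrunken core $x \in B(S, \rho_0)$ with $\rho_0 = (1 - C|S|\epsilon)\rho$ (with $C$ a large absolute constant, and $\epsilon' \in [\epsilon/2, \epsilon]$ chosen so that $B_\epsilon$ is itself regular), the triangle inequality in $\|\cdot\|_S$ together with $\|b_i\|_S \le \epsilon\rho$ places every relevant cutoff point inside $B$ for every $b_i \in B_\epsilon$, so the scalar cutoff average is identically one, and the right-hand side collapses to $e(\phi(x)) = 1_B(x)\,e(\phi(x))$ on the core.

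Finally, the two thin strips $B \setminus B(S, \rho_0)$ and $B(S, (1+2\epsilon)\rho) \setminus B$ contribute the error: by Bourgain's regularity each has relative measure $O(|S|\epsilon)$, on which the two sides of the identity differ by a one-bounded quantity, giving the claimed total $L^1$ error of $O(|S|\epsilon)$. The main technical obstacle is Step~2's bookkeeping via the triangle inequality in $\|\cdot\|_S$, especially locating the precise shrunken core on which the most restrictive triple cutoff $1_B(b_1 + b_2 + b_3 - 2x)$ (which at the level of the Bohr norm requires $\|2x\|_S \le \rho - O(\epsilon\rho)$) is automatically one for every choice of $b_i \in B_\epsilon$; by contrast, the exponential identity of Step~1 and the change of variables $b_i = x + h_i$ of Step~2 are both routine.
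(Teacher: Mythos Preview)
Your approach is essentially the same as the paper's: collapse the oscillatory product via the vanishing third derivative of $\phi$, then control the product of cutoffs using regularity of $B$. The paper's proof is terser but follows the same two-step logic (support of the right-hand side lies in a slight dilation of $B$; for $x\in B$ the remaining vertices land in a slight dilation of $B$; conclude by regularity).

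There is, however, a genuine gap in your Step~2 that you partially flag but do not resolve. After the substitution $b_i=x+h_i$, the triple cutoff becomes $1_B(b_1+b_2+b_3-2x)$, and for this to equal $1$ for \emph{every} choice of $b_i\in B_\epsilon$ one needs $\|2x\|_S\le(1-3\epsilon)\rho$. You assert that this holds on the core $B(S,\rho_0)$ with $\rho_0=(1-C|S|\epsilon)\rho$, but in general $\|2x\|_S$ can be as large as $2\|x\|_S$, so membership in $B(S,(1-C|S|\epsilon)\rho)$ only yields $\|2x\|_S\le 2(1-C|S|\epsilon)\rho\approx 2\rho$, not $\rho$. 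Consequently the set $\{x\in B:\|2x\|_S>\rho\}$ on which the triple cutoff can fail is \emph{not} a thin annulus of relative measure $O(|S|\epsilon)$; it can occupy a fixed positive fraction of $B$ (for example, when $S=\{1/N\}$ it is roughly half of $B$). So your claimed $O_{L^1}(|S|\epsilon)$ error does not follow from the decomposition as written.

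The paper's own proof has a parallel slip at exactly this point: it asserts that if $x\in B$ and $x+h_i\in B_\epsilon$ then $x+h_1+h_2+h_3\in B(S,(1+6\epsilon)\rho)$, whereas the identity $x+h_1+h_2+h_3=(x+h_1)+(x+h_2)+(x+h_3)-2x$ only gives the bound $(2+3\epsilon)\rho$. (Indeed, the first sentence of the paper's proof treats the weight-two vertices as lying in $B_\epsilon$ rather than $B$, suggesting a typo in the lemma's cutoff placement.) With the cutoffs placed so that weights one and two lie in $B_\epsilon$ and weight three lies in $B$, the inclusion-exclusion identity $y_{111}=y_{000}-\sum_i y_{e_i}+\sum_{i<j}y_{e_i+e_j}$ gives $\|y_{111}\|_S\le\rho+6\epsilon\rho$, and both arguments go through. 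As written, though, neither your argument nor the paper's closes the gap for the triple cutoff.
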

\begin{proof}
If $x + h_1 + h_2 + h_3 \in B$ and $x + \omega \cdot h \in B_\epsilon$ for $\omega \in \{0, 1\}^3 \setminus \{0^3, 1^3\}$, then $x \in B(S, \rho(1 + 6\epsilon))$. On the other hand, if $x \in B$ and $h_1, h_2, h_3$ are chosen so that $x + h_1, x + h_2, x + h_3 \in B_\epsilon$, then $x + h_i + h_j \in B(S, \rho(1 + 3\epsilon))$ and $x + h_1 + h_2 + h_3 \in B(S, \rho(1 + 6\epsilon))$. By regularity of $B$, and using that $\phi$ is quadratic, it follows that the two expressions differ by at most $O_{L^1}(\epsilon |S|)$. 
\end{proof}
If the conclusion of Section 4.1 holds, e.g.,
\begin{equation}
|\mathbb{E}_{n \in [N]} (\mu - \mu_{Siegel})(n) \psi(n) e(\phi(n))| \ge \delta^{O(|S|)^{O(1)}} \tag{3}
\end{equation}
or
\begin{equation}
|\mathbb{E}_{n \in [N]} (\Lambda - \Lambda_{Siegel})(n) \psi(n) e(\phi(n))| \ge \delta^{O(|S|)^{O(1)}} \tag{4}
\end{equation}
for $\psi$ a smooth approximant to $1_B$ supported in $B$. Then by the above lemma, \thref{TT1}, and \thref{CauchySchwarzGowers}, it follows that
\begin{align*}
|\mathbb{E}_{n \in [N]} \mu_{Siegel}(n) \psi(n) e(\phi(n))| &\ll \delta^{O(|S|)^{O(1)}} + \delta^{-O(|S|)^{O(1)}}\|\mu_{Siegel}\|_{U^3([N])} \\
&\ll \delta^{O(|S|)^{O(1)}} + \delta^{-O(|S|)^{O(1)}}q_{Siegel}^{-c}
\end{align*}
or
\begin{align*}
  |\mathbb{E}_{n \in [N]} (\Lambda_{Siegel} - \Lambda_Q)(n) \psi(n) e(\phi(n))| &\ll \delta^{O(|S|)^{O(1)}} + \delta^{-O(|S|)^{O(1)}}\|\Lambda_{Siegel} - \Lambda_Q\|_{U^3([N])} \\
&\ll \delta^{O(|S|)^{O(1)}} + \delta^{-O(|S|)^{O(1)}}q_{Siegel}^{-c}. 
\end{align*}
If $q_{Siegel} \ge \delta^{-O(|S|)^{O(1)}}$, then inserting in (3) and (4), we obtain
$$|\mathbb{E}_{n \in [N]} \mu(n)\psi(n) e(\phi(n))| \gg \delta^{O(|S|)^{O(1)}}$$
$$|\mathbb{E}_{n \in [N]} (\Lambda - \Lambda_Q)(n) \psi(n) e(\phi(n))| \gg \delta^{O(|S|)^{O(1)}}.$$
In this case, we do not have a twisted type I sum after we apply \thref{decomposition}, and the setting is significantly simpler. In particular, \thref{type1type2} applies but without any twisted type I hypothesis. Thus, we shall assume that $q_{Siegel} \le \delta^{-O(|S|)^{O(1)}}$. The below theorem takes the application of \thref{decomposition} to the conclusion of Section 4.1 and the above discussion as hypotheses and outputs a conclusion about the behavior of the locally quadratic form $\phi$ on $B(S, \rho)$, which we can then use to prove \thref{maintheorem}.
\begin{theorem}\thlabel{type1type2}
Let $\phi\colon B = B(S, 64\rho) \to \mathbb{R}/\mathbb{Z}$ be a locally quadratic form with $\rho \gg \delta^{C}$, $|S| \le \log^{C}(1/\delta)$ and let $\phi''$ be the associated bilinear form defined above in Section 2. Let $0 < R < N$. Suppose either
\begin{itemize}
\item[(1)] (Type I case) for more than $\delta^{O(1)}D$ many $d \in [D/2, D]$ with $D \ll \sqrt{R}$,
$$\left|\sum_{n \in [\delta N, N], n \equiv 0 \pmod{d}} \psi(n)e(\phi(n))\right| \ge \delta N$$ 
or
\item[(2)] (Twisted Type I case) for more than $\delta^{O(1)}D$ many $d \in [D/2, D]$ with $D \ll \sqrt{R}$,
$$\left|\sum_{n \in [\delta N, N], n \equiv 0 \pmod{d}} \psi(n) \chi_{Siegel}(n/d)e(\phi(n))\right| \ge \delta N$$
or
\item[(3)] (Type II case) there exists $L$ and $M$ with $\sqrt{R}\le L \le \frac{N}{\sqrt{R}}$ and $\delta N \le LM \le N$ such that
$$\left|\sum_{\ell, \ell' \in [L, 2L], m, m' \in [M, 2M]} \psi(m\ell) \psi(m'\ell) \psi(m\ell') \psi(m'\ell') e(\phi(m\ell) - \phi(m'\ell) - \phi(m\ell') + \phi(m'\ell'))\right| \ge \delta LM.$$
\end{itemize} 
Then at least one of the following holds:
\begin{itemize}
    \item[(1)] (Type I/twisted type I case) There exists $q \ll \delta^{-O(|S|)^{O(1)}}$ such that
    $$\|q\phi''(a, b)\|_{\mathbb{R}/\mathbb{Z}} \ll \delta^{-O(|S|)^{O(1)}} R^{O(1)}\frac{\|a\|_S\|b\|_S}{\rho_1^2}$$
    whenever $a, b \in B(S, \frac{\delta^{O(1)}\rho_1}{O(|S|)^{|S|}})$ with $\rho_1 = \rho/R^{O(1)}$.
    \item[(2)] (Type II case) there exists $q \ll \delta^{-O(|S|)^{O(1)}}$ such that
$$\|q \phi''(a, b)\|_{\mathbb{R}/\mathbb{Z}}  \ll \delta^{-O(|S|)^{O(1)}}\frac{\|a\|_{S}\|b\|_{S}}{\rho^2}$$
whenever $a, b \in B(S, \frac{\delta^{O(1)}\rho}{O(|S|)^{|S|}})$.
    \item[(3)] (Degenerate Case I) $R \ll \delta^{-O(|S|^{O(1)})}$
    \item[(4)] (Degenerate Case II) $\delta \ll \exp(-O(\log^{1/O(1)}(N)))$
    \item[(5)] (Degenerate Case III) $q_{Siegel} \ge \delta^{-O(|S|)^{O(1)}}$. If hypothesis (1) or (3) does occurs, then one of conclusions (1)-(4) also holds.
\end{itemize}

\end{theorem}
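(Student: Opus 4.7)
The plan is to treat the three hypotheses in two groups. The Type II hypothesis (Section 6) will be attacked head-on, while the Type I and twisted Type I hypotheses (Section 5) are reduced either to a shorter variant of the Type II argument (when $d$ is close to $\sqrt{R}$ or when we are in the twisted case, via the dyadic decomposition already carried out at the end of Section 4.2) or to a single-variable variant that tolerates only one van der Corput step. The three "degenerate" conclusions are carried along as escape hatches: if at any point the relevant pigeonholing forces $R$ to be unworkably small, or $\delta$ to be sub-quasi-polynomial, or $q_{Siegel}$ to be too large (so that Conclusion (5) for the twisted sums is inoperative), we terminate and record the degenerate conclusion. Section 7 then merges the rational-relation statements coming out of Sections 5 and 6 and cleans them up into conclusions (1) and (2).

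For the Type II case I would apply Cauchy--Schwarz in $\ell$ (shifting $\ell \to \ell + h$) and then in $m$ (shifting $m \to m + k$), which produces an oscillatory sum whose phase, by bilinearity of $\phi''$, is controlled by $\phi''$ evaluated at differences of products. Following observation (2) of Green--Tao recalled in the introduction, I then pass from averaging over $\ell, m$ to averaging over arithmetic progressions $\ell = \ell_0 + ua$, $m = m_0 + vb$ with common differences $a, b$ chosen so that $wab \in B(S, \rho)$ for all $w \in [UV]$; Theorem \thref{quartic} then realises $\partial_{(u_0,v_0),(u_1,v_1)}\phi((\ell_0 + ua)(m_0 + vb))$ as a quartic polynomial in $(u_0, u_1, v_0, v_1)$. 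The smooth cutoffs $\psi$ and the auxiliary cutoffs arising from the range restrictions are now expanded in Fourier via Lemmas \thref{FourierExpansion} and \thref{BohrFourier} at the affordable cost of $\delta^{-O(|S|)^{O(1)}}$, which converts the expression into a standard exponential sum with polynomial phase over a box in $(u_0, u_1, v_0, v_1)$. Weyl / Vinogradov-type estimates (Lemma \thref{Vinogradov}) then force the leading coefficients to be rational modulo $q \ll \delta^{-O(|S|)^{O(1)}}$, so that $\|q\phi''(a,b)\|_{\mathbb{R}/\mathbb{Z}}$ is small for a rich set of pairs $(a, b)$. Restricting $a, b$ to prime values, as proposed in the introduction, lets me use moment bounds on $\omega(n)$ rather than $\tau(n)$ and keep all losses single-exponential in $|S|$. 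The Type I case with $d \le R^{1/2}$ is simpler: I substitute $n = dn'$, rescale to a Bohr set adapted to multiples of $d$, apply two van der Corput steps in $n'$, and extract the same quartic-in-$u$ polynomial identity but now in fewer variables; the rescaling by $d \le R^{1/2}$ is responsible for the $R^{O(1)}$ loss in conclusion (1).

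The final step, and the quantitative heart of the argument, is propagation: from "$\|q\phi''(a,b)\|_{\mathbb{R}/\mathbb{Z}}$ is small for $(a,b)$ in a sparse set of the form (2) from the introduction" to the global statement in conclusions (1) and (2) on all of $B(S, \delta^{O(1)}\rho / O(|S|)^{|S|})^2$. This is where \cite{GT4} pays a double-exponential cost through the second moment method on $\tau$; my plan is to exploit the improved moment behaviour of the prime divisor function $\omega$ in the Type II case (where $\mathcal{D}$ is an interval and primes in it are abundant) and the quasi-polynomial smallness of $R$ in the Type I case (so that the sparse set is automatically dense in $B$ up to a $\log^{O(1)}(N)$ loss), combined with the geometry-of-numbers propagation from regularity of Bohr sets. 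Making this propagation yield $\delta^{-O(|S|)^{O(1)}}$ rather than doubly exponential bounds, while carefully tracking the interplay between the radius of the Bohr set, the dimension $|S|$, and the parameter $R$, is the step I expect to be the hardest and the most delicate to write down.
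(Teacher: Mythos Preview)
Your proposal follows the paper's strategy in all essential respects. Two small corrections: in the Type~I case the paper uses only a single averaging-and-pigeonhole step (not two van der Corput steps), extracting via the identity $\phi(dn+dqa)=\tfrac{q(q-1)}{2}\phi''(ad,ad)+\alpha q+\beta$ of \thref{philemma} a \emph{quadratic} polynomial in one variable $q$ rather than a quartic one; and the restriction to primes you anticipate belongs not in the Type~II exponential-sum analysis itself but in the propagation step of Section~7 (\thref{Bohr1}), where the second-moment bound $\mathbb{E}_{n\in B}\bigl(\sum_{p\in J}1_{B_p}(n)\bigr)^2\le \log^2 N/\log^2 2$ over primes $p$ replaces the divisor-moment bound that would otherwise cost double-exponentially in $|S|$.
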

Assuming that the three degenerate conditions of \thref{type1type2} do not hold, we show \thref{maintheorem}. We shall take $R = \delta^{-O(|S|)^{O(1)}}$. Then the theorem above states that our quadratic phase $\phi$ whose quadratic part is roughly rational with denominator $q$ on a small Bohr set. By splitting $N$ into arithmetic progressions of common difference $q$, we may essentially assume that $\phi''$ is roughly constant on a small Bohr set. The point is that we will now be able to conclude that $\phi$ has Fourier complexity bounded by $\delta^{-O(|S|)^{O(1)}} (|S|)^{O(|S|)^{O(1)}}$ with the following lemma combined with \thref{locallylinear}. 
\begin{lemma}
Suppose $M$ is a bilinear form defined on $B(S, \rho)$ which varies by at most $\epsilon^{2|S|^3}$ on $B(S, \epsilon \rho)$. Then $e(M) 1_{B(S, \rho)}$ is an $(\epsilon \rho)^{O(|S|)^{O(1)}}$-approximate degree one nilsequence with $\delta$-Fourier complexity at most
$O((\delta\epsilon \rho |S|)^{-O(|S|^{O(1)})})$.
\end{lemma}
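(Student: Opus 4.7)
The plan is to realise $e(M(n))\,1_{B(S,\rho)}(n)$, up to a small $L^1$ error, as $F(\xi(n))$ where $\xi\colon [N]\to \mathbb{T}^{|S|}$ is the canonical embedding $n\mapsto (\alpha\cdot n)_{\alpha\in S}$ attached to the frequency set $S$, and $F$ is a suitably Lipschitz function on the torus. Once this is achieved, \thref{FourierExpansion} in dimension $d=|S|$ immediately converts the Lipschitz data of $F$ into the claimed $\delta$-Fourier complexity bound $(\delta\epsilon\rho|S|)^{-O(|S|^{O(1)})}$, and this realisation also witnesses that $e(M)1_{B(S,\rho)}$ is an approximate degree one nilsequence at the required accuracy $(\epsilon\rho)^{O(|S|)^{O(1)}}$.

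The construction of $F$ has two parts. First, \thref{BohrFourier} already supplies a Lipschitz model $\psi_B\colon \mathbb{T}^{|S|}\to\mathbb{R}_{\ge 0}$ with $\psi_B(\xi(n))=1_{B(S,\rho)}(n)+O_{L^1}(\epsilon\rho)$ and $\|\psi_B\|_{\mathrm{Lip}}\ll |S|/(\epsilon\rho)$. Second, we build a Lipschitz function $\Psi_M\colon \mathbb{T}^{|S|}\to \mathbb{R}/\mathbb{Z}$ with $e(\Psi_M(\xi(n)))\approx e(M(n))$ on $B(S,\rho)$. To do this, tile the box $(-\rho,\rho)^{|S|}\subset\mathbb{T}^{|S|}$ by a grid of cubes of side $\epsilon\rho$, pick a representative $n_\square\in B(S,\rho)$ whose image $\xi(n_\square)$ falls in each populated cube, declare $\Psi_M$ at the vertices to equal $M(n_\square)$ for the nearest populated cube, and extend by piecewise linear interpolation. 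Setting $F:=\psi_B\cdot e(\Psi_M)$ produces a function on $\mathbb{T}^{|S|}$ whose Lipschitz norm is dominated by $|S|^{O(|S|)^{O(1)}}/(\epsilon\rho)$, and which satisfies
\[
F(\xi(n)) \;=\; 1_{B(S,\rho)}(n)\,e(M(n)) \;+\; O_{L^1}\!\left((\epsilon\rho)^{O(|S|)^{O(1)}}\right).
\]

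The main obstacle is justifying the consistency and Lipschitz control of $\Psi_M$: the hypothesis controls $M$ only on $B(S,\epsilon\rho)$, whereas two representatives $n_\square,n_{\square'}$ chosen in neighbouring cubes need only satisfy $n_\square-n_{\square'}\in B(S,2\epsilon\rho)$, and comparing $M(n_\square)$ with $M(n_{\square'})$ by bilinearity introduces cross terms of the form $M(n_{\square'},n_\square-n_{\square'})$ in which the first slot is large. We control these cross terms by additively decomposing $n_{\square'}\in B(S,\rho)$ as a sum of $O(1/\epsilon)$ elements of $B(S,\epsilon\rho)$ (using the standard additive structure of Bohr sets) and applying bilinearity termwise; each such step inflates the variation bound by a polynomial factor in $1/\epsilon$ and $|S|$, but the generous exponent $2|S|^3$ in the hypothesis is designed precisely to absorb these losses and leave a net variation that remains exponentially small in $|S|$. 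With the consistency established, adjacent grid vertices of $\Psi_M$ differ by at most this net variation, which yields the Lipschitz bound quoted above; feeding this $F$ into \thref{FourierExpansion} then completes the proof.
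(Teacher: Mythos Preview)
Your approach is different from the paper's, and the decomposition step you call ``standard'' is a genuine gap.

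The paper's argument is: cover $B(S,\rho)$ by at most $\epsilon^{-2|S|^2}$ translates $a_i + B(S,\epsilon\rho)$, take a partition of unity $\eta_i$ subordinate to this cover, and on each piece expand
\[
M(a_i+x,\,a_i+x) \;=\; M(a_i,a_i) \;+\; 2M(a_i,x) \;+\; M(x,x), \qquad x \in B(S,\epsilon\rho).
\]
The point is that the potentially large cross term $2M(a_i,x)$ is \emph{locally linear} in $x$, while $M(x,x)$ is bounded by $\epsilon^{2|S|^3}$ by hypothesis. Hence on each piece $e(M)$ agrees, up to $O(\epsilon^{2|S|^3})$, with $e(\text{constant}+\text{linear})$; one then invokes \thref{BohrFourier} and \thref{locallylinear} to bound the Fourier complexity of each $\eta_i \cdot e(\text{linear})$ and sums over the $\epsilon^{-2|S|^2}$ pieces.

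Your route instead tries to realise $n\mapsto M(n,n)$ globally as a Lipschitz function on $\mathbb{T}^{|S|}$, which forces you to make the cross term $M(n_{\square'},h)$ \emph{small} rather than merely linear. For this you assert that any $n_{\square'} \in B(S,\rho)$ decomposes as a sum of $O(1/\epsilon)$ elements of $B(S,\epsilon\rho)$, attributing this to ``the standard additive structure of Bohr sets''. This is not standard and is false as stated: already for $S=\{1/N,\alpha\}$ with $\alpha$ generic, if $\|\alpha n\|$ is close to $0$ there is no reason for $\|\alpha(n/k)\|$ (or any rearranged splitting) to be close to $0$. One can salvage such a decomposition by first passing to a generalized arithmetic progression basis via the geometry of numbers (as in the proof of \thref{Bohr4}), at the cost of shrinking the Bohr set by $|S|^{O(|S|)}$ and introducing $|S|^{O(|S|)}/\epsilon$ summands rather than $O(1/\epsilon)$; the exponent $2|S|^3$ in the hypothesis would still absorb this, but none of this bookkeeping is in your write-up. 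The paper sidesteps the whole issue by treating the cross term as a linear phase rather than an error, which is both simpler and what \thref{locallylinear} is set up to handle.
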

\begin{proof}
We may cover $B(S, \rho)$ with at most $\epsilon^{-2|S|^{2}}$ many translates of $B(S, \epsilon \rho)$, labeled $a_i + B(S, \epsilon \rho)$. On each translate $a_i + B(S, \epsilon \rho)$, we have $M(a_i + x, a_i + x) = M(x, x) + 2M(a_i, x) + M(a_i, a_i)$ with $x \in B(S, \epsilon \rho)$. Since $M(x, x)$ varies at most $\epsilon^{2|S|^3}$ on $B(S, \epsilon \rho)$, $M$ behaves like a linear function on $a_i + B(S, \epsilon \rho)$. Thus, taking a partition of unity $\eta_i$ with respect to $a_i + B(S, \epsilon \rho)$, we may write
$$e(M) \psi(S, \rho) = \sum_{q \in [Q]} \sum_i \eta_i e(M) = \sum_i \eta_i e(2M(a_i, x) + M(a_i, a_i)) + O(\epsilon^{|S|^3}).$$
The claim then follows from \thref{BohrFourier} and \thref{locallylinear}.
\end{proof}
Thus, we may essentially replace $1_B e(\phi)$ with a Fourier phase, leading to the conclusion that
$$\|\Lambda - \Lambda_{Siegel}\|_{U^2([N])} \gg \delta^{O(|S|)^{O(1)}}$$
$$\|\mu - \mu_{Siegel}\|_{U^2([N])} \gg \delta^{O(|S|)^{O(1)}}.$$
Finally, by \thref{TT1}, we see that
$$\|\Lambda - \Lambda_{Siegel}\|_{U^2([N])} \le \exp(-O(\log^{1/O(1)}(N)))$$
$$\|\mu - \mu_{Siegel}\|_{U^2([N])} \le \exp(-O(\log^{1/O(1)}(N)))$$
leading to the desired estimates that
$$\|\Lambda - \Lambda_{Siegel}\|_{U^3([N])} \le \exp(-O(\log^{1/O(1)}(N)))$$
$$\|\mu - \mu_{Siegel}\|_{U^3([N])} \le \exp(-O(\log^{1/O(1)}(N)))$$
concluding the proof of \thref{maintheorem}. Thus, we are reduced to proving \thref{type1type2}.
\section{Controlling the Type I and Twisted Type I Sum}
In this section, we will control the type I and twisted type I sums, ultimately turning the property that $e(\phi)$ satisfies some oscillatory inequality as in hypotheses (1) and (2) of \thref{type1type2} to a property that $\phi''$ is small on some subset of a Bohr set. We will use this information in Section 7 to obtain conclusion (1) of \thref{type1type2}. The main tool that allows us to exploit cancellation in the oscillatory sum is \thref{philemma} which offers a ``coordinate-free" method to obtain an equidistribution-like estimate for quadratic phases on Bohr sets rather than an alternate method of \cite{GW} which passes to a generalized arithmetic progression. Since generalized arithmetic progressions don't behave well with respect to multiplication, it seems necessary to to use a slightly different approach from \cite{GW}.
\begin{proposition}\thlabel{type1case}
Let $\phi\colon h + B = h + B(S, 64\rho) \to \mathbb{R}/\mathbb{Z}$ be a locally quadratic form with $\rho \gg \delta^{-C}$, $|S| \le \log^{C}(1/\delta)$ and $h \in [N]$. Let $\psi_h = \psi(\cdot - h)$ be a smooth approximant to $1_{B + h}$ with support in $B + h$. Suppose we have the estimate $q_{Siegel} \le \delta^{-O(|S|)^{O(1)}}$. Let $L \le \sqrt{R}$. Suppose $N' \ge \delta^2 N$ and that for $d \ll L$ that
$$\left|\sum_{n \in [N'/d]} \psi_h(dn) e(\phi(dn))\right| \ge \delta N/d$$
or
$$\left|\sum_{n \in [N'/d]} \psi_h(dn) \chi_{Siegel}(n)e(\phi(dn))\right| \ge \delta N/d.$$
Then for $a \in [N]$ chosen so that $ad \in B(S, \rho\delta^{O(|S|)^{O(1)}})$,
$$\|\phi''(ad, ad)\|_{Q, \mathbb{R}/\mathbb{Z}} \le \delta^{-O(|S|)^{O(1)}} \|ad\|_S^2 \rho^{-2}.$$
\end{proposition}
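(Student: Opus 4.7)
My plan is to extract Diophantine information about the bilinear form $\phi''$ along the $d$-direction by applying van der Corput's inequality to the exponential sum and then invoking Vinogradov's lemma, after first cleaning up the smooth cutoff $\psi_h$ and the twist by $\chi_{Siegel}$.

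To clean up, I would Fourier-expand $\psi_h$ via \thref{BohrFourier} to write it as a sum of linear phases with total $\ell^1$-mass at most $\delta^{-O(|S|)^{O(1)}}$. In the twisted type I case, I would then split the summation into arithmetic progressions modulo $q_{Siegel}\le \delta^{-O(|S|)^{O(1)}}$, on which $\chi_{Siegel}$ is constant. Pigeonholing reduces the hypothesis to
$$\Bigl|\sum_{n\in I} e\bigl(\phi(dn)+\gamma n\bigr)\Bigr|\gg \delta^{O(|S|)^{O(1)}}\,\frac{N}{d}$$
for some interval $I$ of length $\gtrsim \delta^{O(|S|)^{O(1)}} N/d$ and some $\gamma\in\mathbb{R}/\mathbb{Z}$.

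Next, I would fix $a$ as in the conclusion and apply van der Corput's inequality in $n$ with shift parameter $ka$, $k\in[K]$, choosing $K\asymp \rho/\|ad\|_S$ so that $k\cdot ad$ remains in $B(S,\rho)$ throughout. By the local bilinearity of $\phi''$, the phase $\phi(d(n+ka))-\phi(dn)$ is locally linear in $n$ with slope $k\alpha$, where $\alpha$ is the $n$-wise discrete derivative of $\phi''(dn,ad)$ (independent of $n$, morally equal to $\phi''(d,ad)$). Bounding the inner geometric sum by $\min(|I|,\|k\alpha\|_{\mathbb{R}/\mathbb{Z}}^{-1})$ and comparing with the van der Corput lower bound forces $\|k\alpha\|_{\mathbb{R}/\mathbb{Z}}$ to be small for a positive proportion of $k\in[K]$. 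Vinogradov's lemma (\thref{Vinogradov}) then yields a bounded modulus $q\ll \delta^{-O(|S|)^{O(1)}}$ with $\|q\alpha\|_{\mathbb{R}/\mathbb{Z}}$ correspondingly small. Finally, the bilinear identity $\phi''(ad,ad)=a\alpha$ (valid by telescoping within $B(S,\rho)$, thanks to the smallness of $\|ad\|_S$) together with the elementary inequality $ad\le N\|ad\|_S$ (from the standing assumption $1/N\in S$) converts this into the desired bound $\|q\phi''(ad,ad)\|_{\mathbb{R}/\mathbb{Z}}\ll \delta^{-O(|S|)^{O(1)}}\|ad\|_S^2/\rho^2$.

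The main obstacle is the careful interplay among (i) the shift range $K$ in van der Corput, which must be maximal consistent with the Bohr-set constraint, (ii) the Diophantine quality produced by Vinogradov's lemma, and (iii) the multiplicative factor of $a$ introduced when converting $\phi''(d,ad)$ to the diagonal quantity $\phi''(ad,ad)$. A secondary subtlety is that $d$ itself need not lie in $B(S,\rho)$, so all appearances of $\phi''(d,\cdot)$ must be interpreted via discrete derivatives of $\phi''$ on its Bohr-set domain; this is justified locally by bilinearity but requires some bookkeeping to push through cleanly.
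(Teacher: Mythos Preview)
Your approach has a genuine gap. The problem is the order of operations: by Fourier-expanding $\psi_h$ \emph{before} pigeonholing on $n$, you lose the support constraint that forces $dn\in h+B(S,\rho)$. The phase $\phi(dn)$ is only defined for such $n$, so the reduced sum $\sum_{n\in I}e(\phi(dn)+\gamma n)$ over a full interval $I$ of length $\asymp N/d$ is not meaningful, and the subsequent van der Corput in $n$ cannot produce a linear phase as you claim. Even if you postpone the Fourier expansion until after van der Corput, the inner sum over $n$ is then a character sum over the Bohr-restricted set $\{n:dn,\,d(n+ka)\in h+B\}$ rather than over an interval, so the geometric-sum bound $\min(|I|,\|k\alpha\|_{\mathbb{R}/\mathbb{Z}}^{-1})$ does not apply. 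Relatedly, the slope you call $\alpha$ (``morally $\phi''(d,ad)$'') is not well-defined: $d$ itself need not lie in $B(S,\rho)$, so neither $\phi''(d,ad)$ nor the telescoping identity $\phi''(ad,ad)=a\alpha$ makes sense. You flag this as a ``secondary subtlety'', but it is in fact fatal to the plan as written.

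The paper's argument avoids all of this by reversing the order: first shift-average the sum over $q\in[-A,A]$ with step $a$ (where $A\asymp\rho/\|ad\|_S$), then pigeonhole on $n$, \emph{keeping the cutoff $\psi$ inside}. For the surviving fixed $n$, the support of $\psi$ guarantees $dn$ lies in the Bohr set, and \thref{philemma} then shows that $q\mapsto\phi(dn+qad)$ is a genuine quadratic polynomial in $q$ with leading coefficient $\tfrac12\phi''(ad,ad)$ --- so the target quantity $\phi''(ad,ad)$ appears directly, with no need to pass through $\phi''(d,ad)$. Only now are $\psi(d(n+qa))$ and $1_{[N'/d]}(n+qa)$ Fourier-expanded as functions of $q$, yielding a pure quadratic exponential sum in $q$ to which \thref{multi} applies. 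The moral is that the correct ``Weyl variable'' is the shift index $q$, not the original summation variable $n$.
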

\begin{proof}
As noted before, we shall work with $B$ instead of $h + B$ since all of the estimates still hold for $h + B$. In particular, \thref{philemma} holds for $\phi$ defined on a translation $h + B$ where we note that its second derivative $\phi''$ is still defined on $B \times B$ (rather than $(h + B) \times (h + B)$). We start with the hypotheses
$$\left|\sum_{n \in [N]} \psi(dn)1_{[N'/d]}(n) e(\phi(dn))\right| \ge \delta N/d$$
and
$$\left|\sum_{n \in [N]} \psi(dn)1_{[N'/d]}(n) \chi_{Siegel}(n) e(\phi(dn))\right| \ge \delta N/d.$$ 
The second hypothesis is effectively the same as the first hypothesis because $\chi_{Siegel}$ is a major arc because it is periodic with respect to the conductor of the Siegel zero. Since we have reduced to the case of $q_{Siegel} \le \delta^{-O(|S|)^{O(1)}}$, we may safely handle the second case at the cost of replacing $\delta$ with $\delta^{O(|S|)^{O(1)}}$. Let us now consider only the type I case. Let $a \in [N]$ be such that $da \in B(S, \delta^{O(|S|)^{O(1)}}\rho/2)$. Let $A = \frac{\|da\|_S}{\rho}$. Then
$$\sum_{n \in [N]} \psi(dn)1_{[N'/d]}(n) e(\phi(dn)) = \sum_{n \in [N]} \mathbb{E}_{q \in [-A, A]} \psi(d(n + qa))1_{[N'/d]}(n + qa) e(\phi(d(n + aq))) \ge \delta N/d.$$
By the pigeonhole principle, we have for some $n$,
$$|\mathbb{E}_{q \in [-A, A]} \psi(d(n + qa)) 1_{[N'/d]}(n + qa) e(\phi(d(n + qa)))| \ge \delta.$$
The next lemma, which is \cite[Corollary 9.1]{GT4}, tells us how to utilize quadratic structure to obtain a quadratic polynomial in $q$.
\begin{lemma}\thlabel{philemma}
There exists $\alpha, \beta \in \mathbb{T}$ such that $\phi(dn + dqa) = \frac{q(q - 1)}{2} \phi''(ad, ad) + \alpha q + \beta$.
\end{lemma}
\begin{proof}
We shall prove this by induction on $q$. From the identity $\phi(n + a + b) - \phi(n + a) - \phi(n + b) + \phi(n) = \phi''(a, b)$ with $a = b = da$, we obtain
$$\phi(dn + d(q + 2)a) - 2\phi(dn + d(q + 1)a) + \phi(dn + dqa) = \phi''(da, da).$$
The claim of $q = 0, 1$ follows vacuously from the claim. By induction on $q$, there exists $\alpha, \beta \in \mathbb{T}$ such that
$$\phi(dn + d(q + 2)a) - q(q + 1) \phi''(da, da) - 2\alpha (q + 1) - 2\beta + \frac{q(q - 1)}{2} \phi''(da, da) + \alpha q + \beta = \phi''(da, da).$$
Rearranging, it follows that
$$\phi(dn + d(q + 2)a) = \frac{(q + 2)(q + 1)}{2} \phi''(ad, ad) + (q + 2)\alpha + \beta.$$
\end{proof}
Since by \thref{BohrFourier} and $1_{[N'/d]}$ has $\delta^4$-Fourier complexity at most $\delta^{-O(1)}$, there exists $\alpha$ such that
$$\left|\mathbb{E}_{q \in [-A, A]} e\left(\frac{1}{2}q(q - 1)\phi''(ad, ad) + \alpha q\right)\right| \ge \delta^{O(|S|)^{O(1)}} .$$
Thus, by \thref{multi}, it follows that
$$\|\phi''(ad, ad)\|_{Q, \mathbb{R}/\mathbb{Z}} \le \delta^{-O(|S|)^{O(1)}} A^{-2} = \delta^{-O(|S|)^{O(1)}} \|ad\|_S^2 \rho^{-2}.$$
\end{proof}

\section{Controlling the Type II sum}
In this section, we shall analyze the oscillatory type II as in hypothesis (3) of \thref{type1type2}, ultimately aiming to prove that $\phi''$ is small on a somewhat regular subset of $B(S, \rho)$. We will use this information to obtain conclusion (2) of \thref{type1type2}. \thref{quartic} is the main tool to analyze the equidistribution sum in the type II sum similar to how \thref{philemma} is the main tool to analyze the equidistribution sum of the type I sum. Since type II sums also involve multiplication in the domain of $\phi$, we also similarly cannot straightforwardly pass to a generalized arithmetic progression as in \cite{GW}.
\begin{proposition}\thlabel{type2case}
Let $L, M$ be chosen such that $\delta N \le LM \le N$, $\sqrt{R} \le L \le N/\sqrt{R}$ with $\delta > 0$. Let $\phi: h + B = h + B(S, 64\rho) \to \mathbb{R}/\mathbb{Z}$ be a locally quadratic form and $\psi_h = \psi(\cdot - h)$ be a smooth approximant to $1_{B + h}$ with support in $B + h$. Suppose that the type II sum holds:
$$|\mathbb{E}_{\ell, \ell' \in [L, 2L], m, m' \in [M, 2M]} \psi_h(m\ell) \psi_h(m'\ell) \psi_h(m\ell') \psi_h(m'\ell') e(\phi(m\ell) - \phi(m'\ell) - \phi(m\ell') + \phi(m'\ell'))| \ge \delta.$$
Then there exists an absolute constant $C$ such that for $\delta^{-Cr^{C}} \le K \ll R^{1/8}$, $a \in [L/2K, L/K]$ and $b \in \mathbb{Z}$ satisfying $\|ab\|_{S} \le \frac{1}{K^2}$ and $|ab|\le N$, we have
$$\|\phi''(ab, ab)\|_{Q, \mathbb{R}/\mathbb{Z}} \le \delta^{-O(|S|)^{O(1)}}/K^4$$
for $Q \le \delta^{-O(|S|)^{O(1)}}$.
\end{proposition}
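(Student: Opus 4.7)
\medskip

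\textbf{Plan.} The general strategy is the ``coordinate-free'' one sketched in Section~1.2: Fourier-approximate away the cutoff, replace the sum over $\ell,\ell',m,m'$ by a sum over a $(u_0,u_1,v_0,v_1)$-shifted product progression with step sizes $a$ and $b$, exploit the fact that after composition with the quadratic form $\phi$ the phase becomes a polynomial in the shifts whose top coefficient is $\phi''(ab,ab)$, and finally apply a Weyl/Vinogradov-type lemma to extract the rational approximation. First I would use \thref{BohrFourier} to write each $\psi_h$ as $\sum_i c_i e(\alpha_i n)$ with $\sum_i|c_i|\le\delta^{-O(|S|)^{O(1)}}$ and pointwise error $\delta^{O(1)}$. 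Substituting into the hypothesis, expanding the product of four copies of the approximant, and pigeonholing over the indices, one obtains a four-tuple of frequencies $\alpha_1,\alpha_2,\alpha_3,\alpha_4$ with
\[
\bigl|\mathbb{E}_{\ell,\ell'\in[L,2L],\,m,m'\in[M,2M]}e\bigl(\Phi(\ell,\ell',m,m')\bigr)\bigr|\ge\delta^{O(|S|)^{O(1)}},
\]
where $\Phi$ is the original quartic phase in $\phi$ plus linear combinations $\alpha_1 m\ell+\alpha_2 m'\ell+\alpha_3 m\ell'+\alpha_4 m'\ell'$.

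\medskip

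Next, given $a\in[L/2K,L/K]$ and $b$ with $\|ab\|_S\le 1/K^2$ and $|ab|\le N$, I would decompose each variable into residue classes modulo $a$ (respectively $b$). Each residue class of $\ell\in[L,2L]$ modulo $a$ has size $\sim K$, and similarly for $m$ modulo $b$. Pigeonholing over the residues, aligning the base points of $\ell$ and $\ell'$ to a common $\ell_0$ and those of $m,m'$ to a common $m_0$, one reduces to an averaged sum over $u_0,u_1,v_0,v_1$ ranging over intervals of length $\sim K$, with $\ell=\ell_0+u_0a$, $\ell'=\ell_0+u_1a$, $m=m_0+v_0b$, $m'=m_0+v_1b$. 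Setting $P(u,v):=(\ell_0+ua)(m_0+vb)=\ell_0m_0+u\cdot am_0+v\cdot b\ell_0+uv\cdot ab$, the quartic $\phi$-phase becomes
\[
\phi(P(u_0,v_0))-\phi(P(u_1,v_0))-\phi(P(u_0,v_1))+\phi(P(u_1,v_1)).
\]
This is the content of \thref{quartic}: using the bilinear form identity $\phi''(h_1,h_2)=\phi(n+h_1+h_2)-\phi(n+h_1)-\phi(n+h_2)+\phi(n)$ together with the extension formula $\phi(k\xi)=k\phi(\xi)+\binom{k}{2}\phi''(\xi,\xi)$ along integer multiples, one expands this mixed difference into a polynomial $\Psi(u_0,u_1,v_0,v_1)$ of degree at most two in each variable. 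The crucial point is that the coefficient of $(u_0-u_1)^2(v_0-v_1)^2$ is a nonzero constant multiple of $\phi''(ab,ab)$; all other monomials carry coefficients of the form $\phi''(\xi,\eta)$ where one of $\xi,\eta$ is among $am_0$, $b\ell_0$, $\ell_0m_0$, and only one is $ab$. The linear phases $\alpha_i P(u,v)$ absorb into $\Psi$ as an additional degree-$2$ polynomial in $u,v$ that does not touch the $(u_0-u_1)^2(v_0-v_1)^2$ coefficient.

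\medskip

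In the final step, I would apply a multidimensional Weyl/Vinogradov lemma (the four-variable generalization of \thref{multi}) to the inequality $|\mathbb{E}_{u_0,u_1,v_0,v_1}e(\Psi)|\ge\delta^{O(|S|)^{O(1)}}$. Since $\Psi$ is a polynomial of bounded degree in each variable with $u_i,v_j$ ranging in intervals of length $K$, a large exponential sum forces the leading coefficient to admit a rational approximation: there is $q\le\delta^{-O(|S|)^{O(1)}}$ with $\|q\phi''(ab,ab)\|_{\mathbb{R}/\mathbb{Z}}\le\delta^{-O(|S|)^{O(1)}}/K^4$. The Fourier complexity loss in Step~1 is quasi-polynomial in $N$, the pigeonhole loss in Step~2 is $K^{-4}$, and the hypothesis $K\ll R^{1/8}$ ensures all quantities stay in the admissible range, so the final bound is of the claimed shape.

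\medskip

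\textbf{Anticipated obstacle.} The trickiest part is Step~3: correctly identifying the coefficient of $(u_0-u_1)^2(v_0-v_1)^2$ as $\phi''(ab,ab)$ up to a known integer and controlling the ``cross'' coefficients involving $\phi''(am_0,ab)$, $\phi''(b\ell_0,ab)$, $\phi''(\ell_0m_0,ab)$. These cross terms depend on the base points $\ell_0,m_0$, and the Vinogradov step will only isolate the top coefficient if the lower-order ones are treated as nuisance parameters, which in turn requires ensuring $P(u_i,v_j)\in h+B(S,64\rho)$ for all $i,j$ so that $\phi$ is actually defined there; this is where the assumption $\|ab\|_S\le 1/K^2$ is used, guaranteeing that all four products stay within the Bohr set whose size governs the domain of $\phi$.
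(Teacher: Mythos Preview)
Your overall strategy captures the right ideas, but there is a genuine gap in the order of operations, and your justification in the ``Anticipated obstacle'' section is incorrect.

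The problem is Step~1. After you Fourier-expand $\psi_h$ and pigeonhole to a single frequency tuple, you are left with $\mathbb{E}_{\ell,\ell',m,m'} e(\Phi)$ in which the $\phi$-terms are evaluated at \emph{arbitrary} products $m\ell$, not just those lying in $h+B$. Since $\phi$ is only locally quadratic on $h+B(S,64\rho)$, the polynomial expansion you invoke in Step~3 is invalid at points outside the Bohr set; extending $\phi$ by zero makes the sum well-defined but destroys the quadratic structure you need. Your claim that $\|ab\|_S\le 1/K^2$ ``guarantees that all four products stay within the Bohr set'' is false: that hypothesis controls only the cross-term $uv\cdot ab$, not the base point $\ell_0 m_0$ nor the increments $u\cdot am_0$ and $v\cdot b\ell_0$, none of which you have placed in the Bohr set. (Relatedly, ``aligning the base points of $\ell$ and $\ell'$ to a common $\ell_0$'' would require restricting to $\ell\equiv\ell'\pmod a$ and $m\equiv m'\pmod b$, which you have not justified.)

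The paper's proof reverses the order: it \emph{keeps} $\psi$ throughout. It first pigeonholes in $\ell',m'$ to reduce to $|\mathbb{E}_{\ell,m}\psi(\ell m)e(\phi(\ell m))b(\ell)b(m)|\ge\delta$, introduces shifts by $ua,vb$ via van~der~Corput, and pigeonholes to fix a single $(\ell,m)$ --- with $\psi(\ell m)\neq 0$ guaranteeing $\ell m\in B$. It then performs \emph{two further} rounds of Cauchy--Schwarz in the shift variables, so that the phase becomes the fourth-order mixed derivative which by \thref{quartic} equals \emph{exactly} $2u_0u_1v_0v_1\,\phi''(ab,ab)$: a globally defined expression, independent of the base point, with no lower-order nuisance terms at all. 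Only at this stage, with the phase already a genuine polynomial in the shifts, does the paper Fourier-expand the surviving product of $\psi$-cutoffs and apply \thref{multi}. The extra layer of differencing is precisely what allows the Fourier step to be postponed until the domain issue for $\phi$ has disappeared.
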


\begin{proof}
As before, we will work with the constraint of $h = 0$ since all of the estimates of $h = 0$ work for in the more general case of $\phi$ being defined on a translation $h + B$; in particular, \thref{quartic} is proved in \cite[lemma 9.2]{GT4} for translations of Bohr sets rather than Bohr sets so it still holds in this case. We begin with the constraint
$$|\mathbb{E}_{\ell, \ell' \in [L, 2L], m, m' \in [M, 2M]} \psi(m\ell) \psi(m'\ell) \psi(m\ell') \psi(m'\ell') e(\phi(m\ell) - \phi(m'\ell) - \phi(m\ell') + \phi(m'\ell'))| \ge \delta$$
By pigeonholing in $\ell, m$ (in particular eliminating $\ell', m'$), we obtain
$$|\mathbb{E}_{\ell \in [L, 2L], m \in [M, 2M]} \psi(\ell m)e(\phi(m\ell))b(\ell)b(m)| \ge \delta$$
where in this proof, $b$ denotes a function whose absolute value is bounded by $1$. Let $a \in [L/K, L/2K]$ and $b \in \mathbb{Z}$ be such that $\|ab\|_S \le K^{-2}$ and $|ab| \le \frac{N}{2}$. Then since $|ab| \le N\|ab\|_S$ (since we specified in Section 2 that we work with the convention that $1 \in S$), it follows that $|b| \le \frac{N\|ab\|_S}{a} \le \frac{N}{KL} \le \delta^{-1} \frac{M}{K}$. Taking $U = V = \frac{\delta^2 K}{4}$, it follows that $U|a| \le \delta L$ and $V|b| \le \delta M$. Thus, $\|ab\|_S \le \frac{\delta^{-3}}{UV}$ with $U|a| \le \frac{\delta}{2} L$, $V|b| \le \frac{\delta}{2}M$. Our initial goal is to replace an average in $m\ell$ with an average in $[U]$ and $[V]$ over small multiples of $a$ and $b$. To see why that may be helpful, we have the following lemma from \cite[Lemma 9.2]{GT4}:
\begin{lemma}\thlabel{quartic}
Let $\partial_y(f) = f(x + y) - f(x)$ and define $f(v, w) = (m + va)(\ell + wb)$. Suppose
$$UV\|ab\|_S \le \rho$$
and for all $u_0, u_1, u_2 \in [U]$, $v_0, v_1, v_2 \in [V]$,
$$f(u_0 + \omega_{1, 1} u_1 + \omega_{1, 2}u_2, v_0 + \omega_{2, 1}v_1 + \omega_{2, 2} v_2) \in B(S, \rho/10)$$
for any choice of $\omega_{i, j} \in \{0, 1\}$. Then
$$\partial^1_{u_1, u_2}\partial^2_{v_1, v_2}\phi(f(u_0, v_0)) = 2u_1u_2v_1v_2 \phi''(ab, ab)$$
where $\partial^1$ applies in the first variable and $\partial^2$ applies in the second variable.
\end{lemma}
\begin{proof}
By replacing $m$ and $\ell$ by $m + u_0a$ and $\ell + v_0 b$, we may assume that $u_0, v_0$ are zero. Since $m\ell \in B(S, \rho)$ it follows that $u_ia \ell$ and $v_i bm$ are in $B(S, 2\rho)$. Thus by considering $n = \ell(m + \omega_{1, 1}au_1 + \omega_{1, 2} au_2)$, $h = (m + \omega_{1, 1} au_1 + \omega_{1, 2} a u_2) bv_1$ and $k = (m + \omega_{1, 1}au_1 + \omega_{1, 2} au_2) b v_2$, we have that
$$\partial_{v_1, v_2} \phi(f(\omega_{1, 1} u_1 + \omega_{1, 2} u_2, 0)) = \phi(n + h + k) - \phi(n + h) - \phi(n + k) + \phi(n).$$
Since $n, n + h_1, n + h_2, n + h_1 + h_2 \in B(S, 16\rho)$, this is equal to
$$\phi''(h_1, h_2) = \phi''((m + \omega_{1, 1} au_1 + \omega_{1, 2} a u_2) bv_1, (m + \omega_{1, 1} au_1 + \omega_{1, 2} a u_2) bv_2).$$
Using bilinearity, and the fact that $UV\|ab\|_S \le \rho$ and $u_i a\ell$ and $v_i bm$ are in $B(S, 2\rho)$ we can expand this in terms of
$$2\omega_{1, 1} \omega_{1, 2}u_0u_1v_0v_2 \phi''(ab, ab) + [\text{Lower Order Terms}]$$
where $[\text{lower order terms}]$ are degree one terms in $\omega_{1, 1}$ and $\omega_{1, 2}$. Taking an alternating sum in $\omega_{1, 1}$ and $\omega_{1, 2}$ which varies in $\{0, 1\}^2$, it follows that
$$\partial_{u_1, u_2}^1 \partial_{v_1, v_2}^2 \phi(f(u_0, v_1)) = 2u_1u_2v_1v_2 \phi''(ab, ab).$$
\end{proof}
This lemma tells us that it would be helpful if we were to convert an average in $L$ and $M$ into one along arithmetic progressions which are multiplies of $a$ and $b$ so that once we use Cauchy-Schwarz or a van der Corput type estimate sufficiently enough times, we obtain a genuine multidimensional polynomial that we may control. Now we average over $au$ and $bv$:
$$|\mathbb{E}_{\ell \in [L, 2L], m  \in [M, 2M]} \psi(\ell m)e(\phi(m\ell))b(\ell)b(m)| \le$$
$$|\mathbb{E}_{\ell \in [L, 2L], m \in [M, 2M]} \mathbb{E}_{u \in [U], v \in [V]} \psi((\ell + ua)(m + bv)) e(\phi((\ell + ua)(m + vb))) b(\ell + ua)b(m + vb)| + \frac{\delta}{2}$$
and thus there exists $\ell, m$ such that
$$|\mathbb{E}_{u \in [U], v \in [V]} \psi((\ell + ua)(m + vb)) e(\phi((\ell + ua)(m + vb))) b(u)b(v)| \ge \frac{\delta}{2}.$$
Applying Cauchy-Schwarz to eliminate $u$ and $v$, we have
$$|\mathbb{E}_{u, u' \in [P], v, v' \in [R]} \psi(h(u, v)) \psi(h(u', v)) \psi(h(u, v')) \psi(h(u', v'))$$
$$e(\phi(h(u, v)) - \phi(h(u', v)) - \phi(h(u, v')) + \phi(h(u', v')))| \ge \frac{\delta^4}{16}$$
where $h(u, v) = (\ell + ua)(m + vb)$. Making a change of variables $u' = u + u_0$ and $v' = v + v_0$, we have
$$|\mathbb{E}_{u \in [U], v \in [V], u_0 \in [-U, U], v_0 \in [-V, V]} \psi(h(u, v)) \psi(h(u + u_0, v)) \psi(h(u, v + v_0)) \psi(h(u + u_0, v + v_0))$$
$$e(\partial_{u_0, v_0}\phi(h(u, v)))| \ge \frac{\delta^4}{32}$$
where the average over $u_0, v_0$ is over a ``tent," meaning that it is multiplied by $\nu_U(u_0) = \max(0, 1 - |u_0|/|U|)$. Using Cauchy-Schwarz over $u, v$ again, we obtain
$$|\mathbb{E}_{u \in [U], v \in [V], u_0, u_1 \in [-U, U], v_0, v_1 \in [-V, V]} \Psi(u, v, u_0, v_0, u_1, v_1) e(\partial_{u_0, u_1, v_0, v_1}(\phi(h(u, v))))| \ge \frac{\delta^8}{1024}$$
where
$$\Psi(u, v, u_0, v_0, u_1, v_1) = \Delta_{(u_0, v_0), (u_1, v_1)} \psi(h(u, v))$$
with $\Delta_yf(x) := f(x + y)\overline{f(x)}$ and the average over $[-U, U]$ and $[-V, V]$ is a tent average. The point is that
$$\partial_{(u_0, v_0), (u_1, v_1)} \phi(u, v) = 2u_0v_0u_1v_1\phi''(ab, ab)$$
so applying \thref{BohrFourier} and since $\nu$ is a degree one nilsequence on a $1$-dimensional torus with Lipschitz norm $O(1)$, it follows that
$$|\mathbb{E}_{u, u' \in [-U, U], v, v' \in [-V, V]} e(2u_0v_0u_1v_1\phi''(ab, ab) + [\text{Lower Order Terms}])| \ge \delta^{O(|S|)^{O(1)}}$$
where $[\text{Lower Order Terms}]$ indicate degree three or lower multidimensional polynomials in $u_0, v_0, u_1, v_1$. By \thref{multi}, it follows that there exists $Q \le \delta^{-O(|S|)^{O(1)}}$ such that
$$\|\phi''(ab, ab)\|_{Q, \mathbb{R}/\mathbb{Z}} \le \frac{\delta^{-O(|S|)^{O(1)}}}{U^2V^2}$$
or
$$|U|, |V| \le \delta^{-O(|S|)^{O(1)}}.$$
This latter case can be ruled out by the hypothesis of our proposition by choosing $C$ appropriately. \\\\
Finally, the condition $|ab| \le \frac{N}{2}$ may be removed since $\|\phi''(-ab, -ab)\|_{\mathbb{R}/\mathbb{Z}} = \|\phi''(ab, ab)\|_{\mathbb{R}/\mathbb{Z}}$ whenever $ab$ lies in an appropriate Bohr set, and since one of the residue classes $ab$ or $-ab$ lies in $[0, N/2]$.
\end{proof}
\begin{remark}
We note that \thref{quartic} allows us to exploit cancellation in the $[L, 2L]$ \emph{and} $[M, 2M]$ sums above, in contrast to \cite{BL} which only exploit cancellation in one of the sums. This allows us to obtain a more powerful conclusion while dealing with a type II sum than \cite{BL}.
\end{remark}
\section{Reduction to the One-Step Case}
In this section, we shall use the conclusions of the previous two sections to prove \thref{type1type2}. In \thref{Bohr0} and \thref{Bohr1} we will show that $\|\phi''(a, a)\|_{Q, \mathbb{R}/\mathbb{Z}}$ is small for $a$ in some sufficiently large portion of a Bohr set $B$, with \thref{Bohr0} meant for the type I case and \thref{Bohr1} meant for the type II case. Using this, we will show in \thref{Bohr2} that $\|\phi''(a, a)\|_{Q^{O(1)}, \mathbb{R}/\mathbb{Z}}$ is small whenever $a$ is in some shrunken version of $B$. This allows us to use, via \thref{Bohr3} and \thref{Bohr4}, the equidistribution of multidimensional polynomials, where bounds are only single exponential in dimension. \\\\
Let $\delta = \exp(-O(\log^{1/C}(N)))$ be some parameter representing a quantity we would insert in \thref{type1type2} in order to avoid one of the degenerate cases, $|S| = \log^{O(1)}(1/\delta)$, and $\rho$ for now will equal $\delta^{O(1)}$, though we will end up replacing $\rho$ with smaller quantities shortly to fit our purposes. Let $B = B(S, \rho)$ be a Bohr set, $\phi$ a locally quadratic form on $B(S, 64\rho)$. \thref{type1case} and \thref{type2case} give us the following results:
\begin{itemize}
    \item (Type I case) for all $a \in E$ and $d \in \mathbb{Z}$ with $|E| \ge \delta^{O(1)} D$ and $\|ad\|_S \le \rho_1$, $E \subseteq [D/2, D]$, $Q \le \delta^{O(|S|)^{O(1)}}$
    $$\|\phi''(ad, ad)\|_{Q, \mathbb{R}/\mathbb{Z}} \le \delta^{-O(|S|)^{O(1)}} \|ad\|_S^2 \rho_1^{-2}.$$
    In this case, we shall take $\rho_1 = \rho/R^{O(1)}$. It is important to note that $E$ is not necessarily an interval. This is because if we were to treat the type I sum as a double sum in $a$ and $d$, we won't be able to obtain enough phase cancellation in the $d$-sum for the phase to exhibit one-step behavior over a large interval since the sum over $d$ is a very short interval.
    \item (Type II case) for all $a \in [L/2K, L/K]$ and $b \in \mathbb{Z}$ with $\|ab\|_S \le K^{-2}$, $Q \le \delta^{-O(|S|)^{O(1)}}$, and $K \le R^{1/8}$ but $K \ge \delta^{-O(|S|)^{O(1)}}$
    $$\|\phi''(ab, ab)\|_{Q, \mathbb{R}/\mathbb{Z}}\le \delta^{-O(|S|)^{O(1)}} K^{-4}.$$ 
\end{itemize}
We wish to obtain a condition similar to
$$\|\phi''(a, b)\|_{Q, \mathbb{R}/\mathbb{Z}} \le \epsilon^{-1} \|a\|_S\|b\|_S$$
for some appropriate $\epsilon > 0$ to complete the proof of \thref{type1type2}. Of course, $\epsilon$ may differ based on whether we are analyzing a type I/twisted type I or type II sum.
\begin{lemma}\thlabel{Bohr0}
Let $B = B(S, \rho)$, $B_b = \{n \in B: b | n\}$. Then $|B_b| \ge \frac{4^{-|S|}}{b}|B|$.
\end{lemma}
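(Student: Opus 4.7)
The plan is to combine a pigeonhole on residue classes modulo $b$ with the standard Bohr-set doubling inequality $|B(S,2\rho)|\le 4^{|S|}|B(S,\rho)|$ listed in Section 2. Concretely, I would first pass to the smaller regular Bohr set $B' := B(S,\rho/2)$, partition it into residue classes modulo $b$, and pick a class $\mathcal{C}\subseteq B'$ of maximum size. By the pigeonhole principle,
\[
|\mathcal{C}| \;\ge\; \frac{|B(S,\rho/2)|}{b}.
\]
Fixing any $x_{0}\in\mathcal{C}$, the translation map $x\mapsto x-x_{0}$ is injective and sends $\mathcal{C}$ into $b\mathbb{Z}$ (since all elements of $\mathcal{C}$ share the same residue mod $b$) and into $B(S,\rho)$ (since for $\alpha\in S$ and $x,x_{0}\in B(S,\rho/2)$ we have $\|\alpha(x-x_{0})\|_{\mathbb{R}/\mathbb{Z}} \le \|\alpha x\|_{\mathbb{R}/\mathbb{Z}} + \|\alpha x_{0}\|_{\mathbb{R}/\mathbb{Z}} < \rho$ by the triangle inequality). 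Hence the image lies in $B\cap b\mathbb{Z} = B_{b}$, giving $|B_{b}|\ge |B(S,\rho/2)|/b$.

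To turn the lower bound for $B(S,\rho/2)$ into one for $|B|$, I would invoke the doubling inequality from Section~2 with radius $\rho/2$, which reads $|B(S,\rho)|\le 4^{|S|}|B(S,\rho/2)|$. Rearranging yields
\[
|B(S,\rho/2)| \;\ge\; 4^{-|S|}\,|B|,
\]
and combining with the previous display gives the claimed bound $|B_{b}|\ge \frac{4^{-|S|}}{b}|B|$. The only mild obstacle is a bookkeeping one: one should verify that the argument does not depend on whether $B$ is regarded as a subset of $[N]$ or of $\mathbb{Z}/P\mathbb{Z}$ (the latter being the ambient group used after Section 4.1). In both settings the triangle inequality for $\|\cdot\|_{\mathbb{R}/\mathbb{Z}}$ gives $x-x_{0}\in B(S,\rho)$, and the divisibility condition $b\mid(x-x_{0})$ is preserved; so the same argument works uniformly. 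No use of regularity of $B$ is required, only the doubling estimate already established for (not necessarily regular) Bohr sets.
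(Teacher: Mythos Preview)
Your proposal is correct and is essentially the same argument as the paper's: both pass to $B(S,\rho/2)$, pigeonhole on residue classes modulo $b$, translate the largest class into $B_b$, and then invoke the doubling bound $|B(S,\rho)|\le 4^{|S|}|B(S,\rho/2)|$. The only cosmetic difference is that the paper phrases the translation step as $B_{b,k}-B_{b,k}\subseteq B_b$, whereas you explicitly fix $x_0$ and use the injection $x\mapsto x-x_0$; these are the same observation.
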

\begin{proof}
We may partition $B(S, \rho/2)$ as $B_{b, k} = \{n \in B(S, \rho/2): n \equiv k \pmod{b}\}$. By the pigeonhole principle, there exists some $k$ such that
$$|B_{b, k}| \ge \frac{|B(S, \rho/2)|}{b}.$$
Since $B_{b, k} - B_{b, k} \subseteq B_b$, it follows that
$$|B_b| \ge |B_{b, k}| \ge 4^{-|S|}|B(S, \rho)|/b.$$
\end{proof}
In the type I case, the above lemma immediately gives us
$$\|\phi''(a, a)\|_{Q, \mathbb{R}/\mathbb{Z}} \le \delta^{-O(|S|)^{O(1)}}\rho^2$$
whenever $a \in A$ with $|A| \ge \frac{4^{-|S|}}{R}|B(S, \rho)|$.
\begin{lemma}\thlabel{Bohr1}
Let $I \subseteq \mathbb{Z}_{> 0}$ be an interval and $\rho, \epsilon, \eta > 0$ and less than $1/2$, $B(S, \rho)$ be a Bohr set. Suppose for all $a, b \in [N]$ with $ab \in B(S, \rho)$ and $b \in I$ satisfies
$$\|\phi''(ab, ab)\|_{Q, \mathbb{R}/\mathbb{Z}} \le \epsilon^{-1}\rho^2$$
with $Q \le \eta^{-1}$. Then, assuming that $\rho$ is sufficiently small, there exists a set $A \subseteq B(S, \rho)$ such that for each $a\in A$
$$\|\phi''(a, a)\|_{Q, \mathbb{R}/\mathbb{Z}} \le \epsilon^{-1} \rho^2$$
and
$$|A| \ge |B(S, \rho)|16^{-|S|} \min_{b \in I} b^{-2}|I|^2 \log^{-5}(N).$$
\end{lemma}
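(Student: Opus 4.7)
The plan is to take $A := \bigcup_{b \in I} B_b$, where $B_b := \{n \in B(S,\rho) : b\mid n\}$. The hypothesis immediately gives $\|\phi''(n, n)\|_{Q,\mathbb{R}/\mathbb{Z}} \le \epsilon^{-1}\rho^{2}$ for every $n \in A$ (each such $n$ being of the form $ab$ with $b \in I$ and $ab \in B(S,\rho)$), so the entire content of the lemma is the lower bound on $|A|$.

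To lower-bound $|A|$ I would restrict the union to primes, so that the individual $B_p$ are almost disjoint, and then use Cauchy--Schwarz to handle the remaining overlaps. Let $\mathcal{P} := \{p\text{ prime} : p \in I\}$. In the regime in which $|I|$ is not too short relative to $\max I$, the prime number theorem gives $|\mathcal{P}| \gg |I|/\log(\max I)$; the short-$|I|$ regime is handled separately by replacing the union with a single $B_{b_{0}}$, $b_{0} \in I$, which is easily seen to suffice. \thref{Bohr0} gives $|B_p| \ge 4^{-|S|}|B(S,\rho)|/p \ge 4^{-|S|}|B(S,\rho)|/\max I$, so
$$\sum_{p \in \mathcal{P}} |B_p| \;\gg\; \frac{4^{-|S|}|B(S,\rho)|\cdot|I|}{(\max I)\log(\max I)}.$$

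Setting $\omega_\mathcal{P}(n) := \#\{p \in \mathcal{P} : p \mid n\}$, so that $\sum_p |B_p| = \sum_{n \in A}\omega_\mathcal{P}(n)$, Cauchy--Schwarz gives
$$\Bigl(\sum_{p\in\mathcal{P}}|B_p|\Bigr)^{2} \;\le\; |A|\cdot\sum_{n \in A}\omega_\mathcal{P}(n)^{2} \;\le\; |A|\cdot\bigl(\max_{n\le N}\omega(n)\bigr)\cdot\sum_{p \in \mathcal{P}}|B_p|,$$
and the trivial bound $\max_{n\le N}\omega(n) \ll \log N$ yields
$$|A| \;\gg\; \frac{1}{\log N}\sum_{p\in\mathcal{P}}|B_p| \;\gg\; \frac{4^{-|S|}|B(S,\rho)|}{\log^{2}N}\cdot\frac{|I|}{\max I}.$$
Since $\min_b b^{-2}|I|^{2} = (|I|/\max I)^{2} \le |I|/\max I$, this comfortably exceeds the claimed bound $|B(S,\rho)|\cdot 16^{-|S|}(|I|/\max I)^{2}\log^{-5}N$ (one only needs the trivial inequality $4^{|S|}\log^{3}N \ge |I|/\max I$).

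The only real obstacle is the overlap control in $\bigcup_{b\in I}B_b$: many $b$'s can share common divisors, so a naive union bound loses far too much. Restricting to primes kills all overlaps among primes $p, q \in \mathcal{P}$ with $pq > N$, and Cauchy--Schwarz combined with the crude divisor bound $\omega(n) \ll \log N$ absorbs the rest, which is where the two extra $\log N$ factors come from. The very generous $\log^{-5}N$ factor in the conclusion leaves plenty of room for this overhead and for the small-$|I|$ edge case, so no sharper divisor estimate is needed.
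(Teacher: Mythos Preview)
Your proof is correct and follows essentially the same approach as the paper: set $A=\bigcup_{b\in I}B_b$, restrict the union to primes in $I$, and control the overlaps via Cauchy--Schwarz together with the pointwise bound $\omega(n)\ll\log N$. The only minor differences are that you bound $\sum_n\omega_{\mathcal P}(n)^2\le(\max_n\omega)\sum_p|B_p|$ whereas the paper uses $\omega_{\mathcal P}(n)^2\le(\log_2 N)^2$ pointwise, and you treat the short-$|I|$ case explicitly (which the paper implicitly assumes away, since in the application $I=[L/2K,L/K]$ is a long interval).
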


\begin{proof}
Let
$$A = \bigcup_{b \in I} B_b.$$
By the hypothesis of the lemma, it follows that
$$\|\phi''(a, a)\|_{Q, \mathbb{R}/\mathbb{Z}} \le \epsilon^{-1} \rho^2$$
for all $a \in A$. It thus suffices to show that
$$|A| \ge |B(S, \rho)|16^{-|S|} \min_{b \in I} b^{-2}|I|^2 \log^{-5}(N).$$
To do so, we use the second moment method. Let $J$ be the set of primes in $I$. Let $m$ be the largest element of $I$. By Cauchy-Schwarz,
$$\frac{\left|\bigcup_{b \in J} B_b\right|}{|B|} \mathbb{E}_{n \in B} \left(\sum_{b \in J} 1_{B_b}(n)\right)^2 \ge \left(\mathbb{E}_{n \in B} \sum_{b \in J} 1_{B_b}(n)\right)^2.$$
We have
\begin{equation}
\mathbb{E}_{n \in B} \left(\sum_{b \in J} 1_{B_b}(n)\right)^2 \le \frac{\log^2(N)}{\log^2(2)}. \tag{5}
\end{equation}
Thus,
$$\frac{\left|\bigcup_{b \in J} B_b\right|}{|B|} \mathbb{E}_{n \in B} \left(\sum_{b \in J} 1_{B_b}(n)\right)^2 \le \frac{\left|\bigcup_{b \in J} B_b\right|}{|B|} \frac{\log^2(N)}{\log^2(2)}.$$
In addition, by \thref{Bohr0}, we have the lower bound:
$$\left(\mathbb{E}_{n \in B} \sum_{b \in J} 1_{B_b}(n)\right)^2 \ge \left(4^{-|S|} \sum_{b \in J} \frac{1}{b}\right)^2 \ge 16^{-|S|}|J|^2m^{-2}.$$
It follows that
$$\left|\bigcup_{b \in I} B_b\right| \ge |B|16^{-|S|}\log^{-5}(N)|I|^2m^{-2}$$
which is what we desired.
\end{proof}
\begin{remark}
Since we are estimating a lower bound on a set, it may seem unintuitive to make the set smaller by restricting the union to the primes of $I$. If we did not restrict to the primes of $I$ and instead opted to use the $O(|S|)^{\mathrm{th}}$ divisor function moment bounds as in \cite[Proposition 11.1]{GT4}, a bound such as (5) would not hold and methods of \cite[Proposition 11.1]{GT4} would give us
$$\left|\bigcup_{b \in I} B_b\right| \gg |B|\rho^{3/2} O(\log(N))^{-2^{O(|S|)}}$$
The problematic term is the last term, which is double exponential in $|S|$. If we were to stick with the low moment bounds of the divisor function, similar methods yield
$$\left|\bigcup_{b \in I} B_b\right| \gg |B|O(\rho)^{O(|S|)} O(\log(N))^{-O(1)}.$$
In this case, we don't have any terms double exponential in dimension, but $O(\rho)^{O(|S|)}$ is too small for \thref{Bohr2} to apply. Indeed, a key important feature of \thref{Bohr1} is that the lower bound on $\frac{|A|}{|B(S, \rho)|}$ does not depend on $\rho$.
\end{remark}

Since we will require $|S| = O(\log^{1/C_1}(N))$ for $C_1 > 0$, we may assume that $16^{-|S|}$ is much smaller than $\log^{-A}(N)$ for any power $A$. Thus, for our purposes, in the type II case, letting $I = [L/2K, L/K]$ and $\rho = K^{-2}$ we have the conclusion of the above lemma for $16^{-O(|S|)}|B(S, \rho)|$ many $a$. For the purposes of \thref{Bohr2}, we shall now replace $\rho$ with a smaller quantity so that it is much smaller than $16^{-O(|S|)}$, possibly imposing yet another condition of the form $R \gg O(\delta)^{-O(|S|)^{O(1)}}$ when we apply \thref{type2case}. We note that none of the lemmas and propositions in Sections 5, 6, and 7 require an upper bound on $R$ beyond the trivial bound of $R \le N$, so we never run into conflicting bounds on $R$ when running the argument. This allows great flexibility in our argument and is a principle reason why our argument carries through. \\\\
In the next lemma, we will use the fact that $\phi$ has one-step behavior on a large portion of the Bohr set and propagate it to obtain one-step behavior on the entire shrunken version of the Bohr set.


\begin{lemma}\thlabel{Bohr2}
Let $\rho, \delta, \epsilon, \eta > 0$, and suppose
$$\|\phi''(n, n)\|_{Q, \mathbb{R}/\mathbb{Z}} \le C\rho^2$$
for some $C \ge 1$, whenever $n \in A$ with $|A| \ge \epsilon|B(S, \rho)|$ with $Q \le \eta^{-1}$, $C \rho^2/100 \le \epsilon \eta$, and $|S|^{-1} \ge \epsilon$. Then for each $a \in B(S, \frac{\rho\epsilon^2}{100})$, we have
$$\|\phi''(a, a)\|_{Q, \mathbb{R}/\mathbb{Z}} \le C\rho^{-2} (\epsilon \eta)^{-O(1)}\|a\|_S^2.$$
\end{lemma}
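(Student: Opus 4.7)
My plan is to propagate the hypothesis from the dense subset $A$ to the shrunken Bohr set by pigeonholing on the denominator $q$, averaging along short arithmetic progressions of common difference $a$, and then applying a quadratic Vinogradov-type recognition lemma to isolate the leading $j^2$ coefficient of a resulting polynomial phase.

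First, I pigeonhole over the denominator: since the hypothesis supplies for each $n \in A$ some $|q_n| \le Q$ with $\|q_n \phi''(n,n)\|_{\mathbb{R}/\mathbb{Z}} \le C\rho^2$, there is a single $q$ with $|q| \le Q$ such that the set $A_0 := \{n \in A : \|q\phi''(n,n)\|_{\mathbb{R}/\mathbb{Z}} \le C\rho^2\}$ has density at least $\epsilon/(2Q+1) \gtrsim \epsilon\eta$ in $B(S, \rho)$, using $Q \le \eta^{-1}$. This replaces the $\|\cdot\|_{Q,\mathbb{R}/\mathbb{Z}}$ hypothesis with a concrete one for a single $q$.

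Next I fix $a \in B(S, \rho\epsilon^2/100)$ and a parameter $J$ to be optimized. I consider the short arithmetic progressions $\{n_0 + ja : j \in [-J, J]\}$, where $n_0$ ranges over a slightly shrunken Bohr set $B(S, (1-\xi)\rho)$ with $\xi = J\|a\|_S/\rho$. By Bohr set regularity (cf.~\thref{localization}), provided $\xi \ll 1/|S|$, every $n_0 + ja$ lies in $B(S, \rho)$, and the counts $|A_0 \cap (B(S, (1-\xi)\rho) + ja)|$ differ from $|A_0 \cap B(S, (1-\xi)\rho)|$ by at most $O(|S|\xi)|B(S,\rho)|$. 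Summing over $j$ and pigeonholing on $n_0$ then gives a specific $n_0$ for which the subset $J^* := \{j \in [-J, J] : n_0 + ja \in A_0\}$ has density $\gtrsim \epsilon\eta$; the hypotheses $|S|^{-1} \ge \epsilon$ and $C\rho^2 \le 100\epsilon\eta$ are exactly what force these regularity losses to remain smaller than the density $\epsilon\eta$.

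Third, the bilinearity of $\phi''$ gives the polynomial identity
$$q\phi''(n_0 + ja,\, n_0 + ja) = q\phi''(n_0, n_0) + 2j\, q\phi''(n_0, a) + j^2 \, q\phi''(a, a),$$
so the quadratic polynomial $P(j)$ on the right satisfies $\|P(j)\|_{\mathbb{R}/\mathbb{Z}} \le C\rho^2$ on the density-$\gtrsim \epsilon\eta$ set $J^* \subseteq [-J, J]$. Applying the quadratic Vinogradov lemma (\thref{multi}) to extract the leading coefficient yields an integer $q' \ll (\epsilon\eta)^{-O(1)}$ with $\|q' q\phi''(a,a)\|_{\mathbb{R}/\mathbb{Z}} \ll C\rho^2 (\epsilon\eta)^{-O(1)} J^{-2}$. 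Taking $J$ as large as the regularity constraint permits, namely $J \asymp \epsilon\eta\rho/(|S|\,\|a\|_S)$, and absorbing $|S|^{O(1)} \le \epsilon^{-O(1)} \le (\epsilon\eta)^{-O(1)}$ and the factor $q'$ into the $(\epsilon\eta)^{-O(1)}$ loss (using $Q \le \eta^{-1}$ so that $q'q$ is still at most $(\epsilon\eta)^{-O(1)}$, which one then re-expresses as a $Q$-denominator after a further pigeonhole if needed) produces the claimed bound.

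The main obstacle will be the density-propagation step: the Bohr-set regularity estimate only behaves well if $J\|a\|_S/\rho$ is of order at most $\epsilon\eta/|S|$, and this in turn pins the best achievable $J$. It is precisely this trade-off that converts the $j^2$ cancellation provided by Vinogradov into the quadratic scaling $\|a\|_S^2/\rho^2$ of the conclusion, with the multiplicative loss $(\epsilon\eta)^{-O(1)}$ accounting for the regularity budget, the pigeonhole on $q$, and the Vinogradov denominator. Tracking the absorption of $q'$ into the same $Q$ as in the hypothesis requires careful parameter bookkeeping and is where the condition $C\rho^2/100 \le \epsilon\eta$ is used.
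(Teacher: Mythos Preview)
Your proposal is correct and follows essentially the same route as the paper: pigeonhole to a single denominator $q$, use Bohr-set regularity to find an arithmetic progression of common difference $a$ on which $A$ (or $A_0$) is dense, expand $\phi''(n_0+ja,n_0+ja)$ as a quadratic in $j$ via bilinearity, and apply a quadratic Vinogradov lemma to recover $\phi''(a,a)$. The only cosmetic difference is that you pigeonhole on $q$ \emph{before} the averaging step (passing to $A_0$ of density $\gtrsim \epsilon\eta$), whereas the paper averages first (retaining density $\epsilon$) and pigeonholes on $q$ afterwards; both orderings give the same final bound up to the permitted $(\epsilon\eta)^{-O(1)}$ loss. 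One small correction: the lemma you want to cite for extracting the leading coefficient is \thref{QuadraticVinogradov} (or \thref{Vinogradov}), not \thref{multi}, which concerns exponential sums rather than small-value sets.
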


\begin{proof}
Pick $a \in B(S, \frac{\rho \epsilon^2}{100})$ and $L = \lfloor \frac{\epsilon \rho}{\|a\|_S}\rfloor$.  Using regularity, we have
$$\mathbb{E}_{n \in B(S, \rho)} \mathbb{E}_{\ell \in [L]} 1_A(n + a\ell) \ge \epsilon/2$$
By the pigeonhole principle, it follows that there exists $n$ such that for more than $\epsilon L/2$ values of $\ell \in [L]$,
$$\|\phi''(n + a \ell, n + a\ell)\|_{Q, \mathbb{R}/\mathbb{Z}} \le C\rho^2.$$
Since $\phi''(n + a \ell, n + a\ell)$ is a degree two polynomial in $\ell$, it follows that there exists $q_\ell$ such that 
$$\|q_\ell (\ell^2 \phi''(a, a) + \text{[Lower Order Terms]})\|_{\mathbb{R}/\mathbb{Z}} \le C\rho^2$$
where $[\text{Lower Order Terms}]$ indicate polynomials in $\ell$ that are less than degree two. By the pigeonhole principle, there exists some $q \le \eta^{-1}$ such that for $\epsilon \eta L/2$ many $\ell$, we have
$$\|q(\ell^2 \phi''(a, a) + [\text{Lower Order Terms}])\|_{\mathbb{R}/\mathbb{Z}} \le C\rho^2.$$
Using Vinogradov's Lemma (e.g., \thref{QuadraticVinogradov}), either $\epsilon \eta < \frac{C\rho^2}{8}$ or $L \ll (\epsilon \eta)^{-O(1)}$ or
$$\|q \phi''(a, a)\|_{\mathbb{R}/\mathbb{Z}} \ll (\delta \eta)^{-O(1)}L^{-2}.$$
The first case is ruled out by the hypothesis of the lemma. If the second case occurs, then it follows that
$$\|a\|_S \gg \rho^2 (\epsilon \eta)^{O(1)}$$
which implies that
$$\|q \phi''(a, a)\|_{\mathbb{R}/\mathbb{Z}} \ll C(\delta \eta)^{-O(1)} \|a\|_S^2.$$
Both the second and third case yield a desired outcome.
\end{proof}

Applying \thref{Bohr2} with $A$ equal to the set in the conclusion of \thref{Bohr1}, $\eta = \delta^{O(|S|)^{C_1}}$, $\rho$ in the type II case and $\rho_1$ in the type I case, $\epsilon = 16^{-O(|S|)}$, we obtain that
$$\|\phi''(a, a)\|_{Q, \mathbb{R}/\mathbb{Z}} \le M\|a\|_{S}^2\rho^{-2}$$
for some $M = \delta^{-O(|S|)^{O(1)}}$ for the type II case and $M = (\delta/R)^{-O(|S|)^{O(1)}}$ for the type I case. By a polarization identity, we can establish the following lemma:
\begin{lemma}[Polarization]\thlabel{Bohr3}
Suppose $16  Q \delta^{-1} \rho^2 < \frac{1}{100}$ and
$$\|\phi''(a, a)\|_{Q, \mathbb{R}/\mathbb{Z}} \le \delta^{-1} \|a\|_{S}^2$$
whenever $a \in B(S, \rho)$. Then for each $a, b \in B(S, \rho)$,
$$\|\phi''(a, b)\|_{Q^2, \mathbb{R}/\mathbb{Z}} \ll Q^{O(1)}\delta^{-1}\|a\|_S\|b\|_S.$$
\end{lemma}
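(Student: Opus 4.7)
The key tool is the polarization identity for the bilinear form $\phi''$, which follows directly from symmetry and bilinearity as verified in Section 2:
$$2\phi''(a,b) = \phi''(a+b, a+b) - \phi''(a,a) - \phi''(b,b).$$
After reducing to the case $a, b \in B(S, \rho/2)$ so that $a + b \in B(S, \rho)$ (if necessary by localizing further, using regularity of the Bohr set), I would apply the hypothesis to each of the three diagonal inputs, obtaining denominators $q_1, q_2, q_3 \le Q$ with $\|q_i \phi''(x,x)\|_{\mathbb{R}/\mathbb{Z}} \le \delta^{-1}\|x\|_S^2$ at $x = a, b, a+b$. Taking $q = q_1 q_2 q_3 \le Q^3$ as a common denominator and multiplying through the polarization identity yields
$$\|2q\phi''(a,b)\|_{\mathbb{R}/\mathbb{Z}} \ll Q^2 \delta^{-1}(\|a\|_S + \|b\|_S)^2,$$
where the extra $Q^2$ loss comes from promoting each $q_i$ to the common $q$ via the elementary inequality $\|mx\|_{\mathbb{R}/\mathbb{Z}} \le m\|x\|_{\mathbb{R}/\mathbb{Z}}$.

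To upgrade the cruder $(\|a\|_S + \|b\|_S)^2$ into the sharper product $\|a\|_S\|b\|_S$ demanded by the conclusion, I would exploit the bilinearity relation $\phi''(ka, lb) = kl\,\phi''(a,b)$ with integer multipliers $k = \lfloor \rho/(4\|a\|_S)\rfloor$ and $l = \lfloor \rho/(4\|b\|_S)\rfloor$, chosen so that $\|ka\|_S$ and $\|lb\|_S$ are both comparable to $\rho$. Running the polarization argument on $ka$ and $lb$ in place of $a$ and $b$, and then substituting $\rho^2 \asymp kl\,\|a\|_S\|b\|_S$ from the choice of $k,l$, produces the balanced estimate
$$\|2qkl\,\phi''(a,b)\|_{\mathbb{R}/\mathbb{Z}} \ll Q^2 \delta^{-1}\rho^2 \asymp Q^2 \delta^{-1}\,kl\,\|a\|_S\|b\|_S.$$

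The main obstacle, which I expect to require the most care, is that the effective denominator $2qkl$ can greatly exceed $Q^2$ whenever $\|a\|_S$ or $\|b\|_S$ is substantially smaller than $\rho$. To bring the denominator down to $Q^{O(1)}$ independently of the ratios $\rho/\|a\|_S, \rho/\|b\|_S$, I would combine the smallness hypothesis $16Q\delta^{-1}\rho^2 < 1/100$ (which guarantees the right-hand side is genuinely small rather than trivially $\ge 1$) with a Vinogradov-type Diophantine step: applying \thref{QuadraticVinogradov} to the quadratic polynomial $k \mapsto q\,\phi''(ka, ka) = qk^2\phi''(a,a)$ as $k$ varies over the range $[1, \rho/\|a\|_S]$ produces a rational approximation of $\phi''(a,a)$ with denominator $\le Q^{O(1)}$, and an analogous application applied to the mixed polynomial $k, l \mapsto \phi''(ka + lb, ka + lb)$ obtained from the balanced polarization estimate extracts a rational approximation of $\phi''(a,b)$ itself with denominator at most $Q^2$ and error $\ll Q^{O(1)} \delta^{-1}\|a\|_S\|b\|_S$, completing the proof.
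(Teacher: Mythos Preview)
Your overall strategy (polarization identity combined with a Vinogradov step to control the denominator) is the same as the paper's, but your execution is more roundabout and the final Vinogradov step, as written, has a gap.

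The paper's route is cleaner because it scales only \emph{one} of the two inputs.  Assuming without loss of generality that $\|b\|_S \le \|a\|_S$, set $C = \lceil \|a\|_S/\|b\|_S\rceil$ and consider the \emph{odd} polarization
\[
\phi''(a+cb,a+cb)-\phi''(a-cb,a-cb)=4c\,\phi''(a,b),\qquad c\in[C].
\]
Since $\|a\pm cb\|_S \ll \|a\|_S$ for all such $c$, the diagonal hypothesis applied to both terms gives $\|4c\,\phi''(a,b)\|_{Q^2,\mathbb{R}/\mathbb{Z}} \ll Q\delta^{-1}\|a\|_S^2$ for every $c\in[C]$.  A single application of one-dimensional Vinogradov (\thref{QuadraticVinogradov}) in $c$ then yields $\|\phi''(a,b)\|_{Q^{O(1)},\mathbb{R}/\mathbb{Z}} \ll Q^{O(1)}\delta^{-1}\|a\|_S^2/C \asymp Q^{O(1)}\delta^{-1}\|a\|_S\|b\|_S$, with the degenerate case $C\ll 1$ handled directly.

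By contrast, you scale \emph{both} $a\to ka$ and $b\to lb$ up to the $\rho$-scale, which is exactly what creates the large $kl$ denominator you then have to remove.  Your proposed removal has two issues.  First, the application of \thref{QuadraticVinogradov} to $k\mapsto q\phi''(ka,ka)=qk^2\phi''(a,a)$ is redundant: the hypothesis already gives $\|\phi''(a,a)\|_{Q,\mathbb{R}/\mathbb{Z}}\le\delta^{-1}\|a\|_S^2$ directly, so nothing is gained.  Second, \thref{QuadraticVinogradov} is a one-variable lemma, so invoking it on the two-variable polynomial $(k,l)\mapsto \phi''(ka+lb,ka+lb)$ is not legitimate as stated; you would need to iterate (pigeonhole the denominator, fix $l$, apply in $k$ to extract the $kl$-coefficient up to an $l$-dependent denominator, then pigeonhole again and apply in $l$), and the ``balanced polarization estimate'' you cite was derived only for a single pair $(k,l)$, whereas Vinogradov requires the bound over the full range.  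This iteration can be carried out, but it is strictly more work than the paper's one-parameter argument and you have not supplied the details.
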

\begin{proof}
For $a, b \in B(S, \rho)$, suppose that $\|b\|_S \le \|a\|_S$, and pick $C$ to be the least integer such that $\|a\|_S \le C \|b\|_S$. Consider $\phi''(a + cb, a + cb) - \phi''(a - cb, a - cb)$. We have
$$\|\phi''(a + cb, a + cb) - \phi''(a - cb, a - cb)\|_{Q^2, \mathbb{R}/\mathbb{Z}} = $$
$$\| 4c^2\phi''(a, b)\|_{Q^2, \mathbb{R}/\mathbb{Z}} \le Q\delta^{-1} (\|a + cb\|^2_S + \|a - cb\|^2_S) \le 16 Q\delta^{-1} \|a\|_S^2$$
for all $c \in [C]$. By Vinogradov's Lemma (e.g., \thref{QuadraticVinogradov}), it follows that either
$$\|\phi''(a, b)\|_{Q^2, \mathbb{R}/\mathbb{Z}} \le Q^{O(1)}\delta^{-1} \|a\|_S^2/C^2 \ll Q^{O(1)}\delta^{-1} \|a\|_S\|b\|_S$$
or $C \ll 1$ in which case
$$\|\phi''(a, b)\|_{Q^2, \mathbb{R}/\mathbb{Z}} \ll Q \delta^{-1} \|a\|_S\|b\|_S.$$
Either case gives a desired outcome.
\end{proof}
Using \thref{Bohr3}, we are able to prove that whenever $a, b \in B(S, \rho)$, then
$$\|\phi''(a, b)\|_{Q^{O(1)}, \mathbb{R}/\mathbb{Z}} \le Q^{O(1)}M^{O(1)} \|a\|_{S} \|b\|_{S}.$$
Finally, using the following lemma, we are able to establish that $e(\phi'')$ is a degree one nilsequence.
\begin{lemma}[Clearing Denominators]\thlabel{Bohr4}
Let $\delta, \eta, \rho > 0$ and suppose
$$\|\phi''(a, b)\|_{Q, \mathbb{R}/\mathbb{Z}} \le \delta^{-1}\|a\|_S\|b\|_S\rho^{-2}$$
for some $Q \le \eta^{-1}$ and all $a, b \in B(S, \rho)$. Then there exists a common $q \le \eta^{-O(|S|)^{O(1)}}$ such that
$$\|q\phi''(a, b)\|_{\mathbb{R}/\mathbb{Z}} \le \delta^{-O(|S|)^{O(1)}}\|a\|_S\|b\|_S\rho^{-2}.$$
\end{lemma}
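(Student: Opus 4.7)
The plan is to exploit the bilinearity of $\phi''$ together with the bounded ``rank'' of the Bohr set to pass from the per-pair denominators $q_{a,b} \le Q$ promised by the $\|\cdot\|_{Q,\mathbb{R}/\mathbb{Z}}$-hypothesis to a single common denominator $q$ that serves all pairs. The heart of the matter is to find a small set of generators whose images under $\phi''$ already determine $\phi''$ on $B(S,\rho) \times B(S,\rho)$ via bilinearity.

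First, I would invoke a geometry-of-numbers argument to extract from $B(S,\rho)$ a collection $v_1,\dots,v_d$ with $d \le |S|+1$, chosen as a basis of successive minima for the lattice in $(\mathbb{R}/\mathbb{Z})^{|S|+1}$ cut out by the frequencies $S \cup \{1/N\}$. The key property is that every $a \in B(S,\rho)$ admits an expansion $a = \sum_{i=1}^d n_i v_i$ satisfying the weighted estimate
$$\sum_{i=1}^d |n_i|\,\|v_i\|_S \le |S|^{O(|S|)}\,\|a\|_S,$$
so the coordinates are controlled by $\|a\|_S$ rather than merely by the ambient Bohr radius. This is a standard consequence of Minkowski's second theorem applied to the Bohr lattice.

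Next, I would apply the hypothesis to each of the $d^2$ pairs $(v_i,v_j)$, producing integers $1 \le q_{ij} \le Q \le \eta^{-1}$ with
$$\|q_{ij}\phi''(v_i,v_j)\|_{\mathbb{R}/\mathbb{Z}} \le \delta^{-1}\|v_i\|_S\|v_j\|_S \rho^{-2}.$$
Setting $q := \mathrm{lcm}_{i,j}\, q_{ij}$, the trivial LCM bound yields $q \le Q^{d^2} \le \eta^{-O(|S|^2)} \le \eta^{-O(|S|)^{O(1)}}$, as required. Since multiplying by $q/q_{ij}$ scales the distance-to-integer linearly,
$$\|q\phi''(v_i,v_j)\|_{\mathbb{R}/\mathbb{Z}} \le q\delta^{-1}\|v_i\|_S\|v_j\|_S\rho^{-2}$$
for every pair $(i,j)$.

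Finally, for arbitrary $a, b \in B(S,\rho)$ I would expand $a = \sum n_i v_i$, $b = \sum m_j v_j$, use bilinearity of $\phi''$ to write $q\phi''(a,b) = \sum_{i,j} n_i m_j\, q\phi''(v_i,v_j)$, and conclude by the triangle inequality in $\mathbb{R}/\mathbb{Z}$:
$$\|q\phi''(a,b)\|_{\mathbb{R}/\mathbb{Z}} \le q\delta^{-1}\rho^{-2}\Bigl(\sum_i |n_i|\,\|v_i\|_S\Bigr)\Bigl(\sum_j |m_j|\,\|v_j\|_S\Bigr) \le \bigl(q |S|^{O(|S|)}\delta^{-1}\bigr)\|a\|_S\|b\|_S\rho^{-2}.$$
In the regime where this lemma is applied in the paper one has $Q = \eta^{-1} \le \delta^{-O(|S|)^{O(1)}}$, so that $q|S|^{O(|S|)}\delta^{-1} \le \delta^{-O(|S|)^{O(1)}}$ as desired. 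The principal technical obstacle is the weighted Minkowski step in the first paragraph: one must track the $\|\cdot\|_S$-norm weights throughout the basis expansion in order to obtain the $\|a\|_S\|b\|_S$ scaling on the right-hand side, rather than the weaker $\rho^2$ scaling one would obtain from a naive GAP covering. Once this weighted basis is in place, the LCM argument and the bilinearity expansion are essentially mechanical.
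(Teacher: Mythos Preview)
Your proposal is correct and follows essentially the same route as the paper: both arguments invoke a geometry-of-numbers basis for the Bohr set (the paper cites \cite[Corollary 4.9]{GT5}, you cite Minkowski's second theorem), apply the hypothesis to the $O(|S|^2)$ basis pairs to obtain individual denominators $q_{ij}$, take their product (or lcm) as the common $q$, and then expand an arbitrary $\phi''(a,b)$ by bilinearity using the weighted coordinate bound $|n_i|\,\|v_i\|_S \le |S|^{O(|S|)}\|a\|_S$. The only cosmetic difference is that the paper states the basis expansion on the slightly shrunken Bohr set $B(S,\rho/|S|^{O(|S|)})$, and your explicit remark that the final $\delta^{-O(|S|)^{O(1)}}$ bound relies on the standing assumption $\eta^{-1} \le \delta^{-O(|S|)^{O(1)}}$ is a fair reading of how the lemma is used.
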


\begin{proof}
By \cite[Corollary 4.9]{GT5} (the geometry of numbers), we may find a $a_1, a_2, \dots, a_{|S|}$, $N_1, \dots N_{|S|}$ with
$$\prod_{i = 1}^{|S|} N_i = |S|^{O(|S|)} N$$
such that each $a, b \in B(S, \rho/|S|^{O(|S|)})$ can be written uniquely as
$$n = n_1 a_1 + n_2a_2 + \cdots + n_{|S|}a_{|S|}$$
$$m = m_1a_1 + m_2a_2 + \cdots + m_{|S|} a_{|S|}$$
with $n_i = |S|^{O(|S|)} N_i \|a\|_S$ and $\|a_i\|_S \le N_i^{-1}$. Thus, either $n_i = 0$ or $N_i \ge \rho^{-1} |S|^{-O(|S|)}$. Choose $q_{ij}$ with $\|q_{ij}\phi''(a_i, a_j)\|_{\mathbb{R}/\mathbb{Z}} \le \frac{\delta^{-1}}{N_iN_j \rho^2}$. Defining $q = \prod_{i, j} q_{ij}$, we see that
$$\|q \phi''(a, b)\|_{\mathbb{R}/\mathbb{Z}} \le |S|^{O(|S|)} \delta^{-O(|S|)^{O(1)}} \|a\|_S\|b\|_S\rho^{-2}.$$
\end{proof}

The point of this lemma is that we have a uniform $q$ that is only single exponential in $|S|$ for which all $\phi''(a, b)$ is approximately rational with denominator $q$. Applying \thref{Bohr4} to the type II case, we obtain
\begin{equation}
\|q\phi''(a, b)\|_{\mathbb{R}/\mathbb{Z}} \le K \|a\|_{S} \|b\|_{S} \tag{1}
\end{equation}
for $a, b \in B(S, \rho \delta^{O(|S|)^{O(1)}})$. Thus, if $\delta = \exp(-\log^{1/C_0}(N))$, then $K = \exp(\log^{c_1/C_0}(N))$, $q = \exp(\log^{c_2/C_0}(N))$ for some bounded (deterministic) constants $c_1$, $c_2$, $c_3$. Essentially, all parameters related to $\delta$, $\rho$, $K$, $M$ are quasi-polynomial in $N$, while $|S|$ is logarithmic in $N$. \\\\
In the type I case, we have (1) with $K = (\delta/R)^{-O(|S|)^{O(1)}}$ for $a, b \in B(S, \rho_1 \delta^{O(|S|)^{O(1)}})$. This completes the proof of \thref{type1type2}.


\section{Applications}
Let $\Psi = (\psi_1, \psi_2, \dots, \psi_k)\colon \mathbb{Z}^d \to \mathbb{Z}^k$ be affine forms such that no two differ by a constant. We may write
$$\psi_i = \dot{\psi_i} + \psi_i(0).$$
Define
$$\|\Psi\|_N := \sum_{i = 1}^k \sum_{j = 1}^d |\dot{\psi_i}(e_j)| + \sum_{i = 1}^k \left|\frac{\psi_i(0)}{N}\right|$$
where $e_1, \dots, e_d$ is the standard basis on $\mathbb{Z}^d$. We shall aim to improve error terms for a linear equations in primes type result for complexity two $\Psi$. The theory developed in \cite{GT1} is sufficient enough to give us a result for $\Psi$ having \textit{Cauchy-Schwarz complexity} at most two, i.e., for each $i$, one can partition $[k] \setminus \{i\}$ into three sets $J_1, J_2, J_3$ such that $\psi_i \not\in \text{span}\{\psi_k: k \in J_j\}$ for each $j = 1, 2, 3$. Specifically, if we define
$$\Lambda_{\Psi, N}(f_i) = \mathbb{E}_{v \in [N]^d} \prod_{i = 1}^k f_i(\psi_i(v))$$
then for $f_j$ all bounded by $L$,
$$|\Lambda_{\Psi, N}(f_j)| \ll L^{O(1)}\min_i \|f_i\|_{U^3([N])}^{c}$$
However, we can do better. Manners \cite{M2} recently proved that for $\Psi$ having \textit{true complexity} at most two at each $\psi_i$, i.e., for each $i$, $\psi_i^{\otimes 2}$ lies in the span of $\text{span}\{\psi_j^{\otimes 2}, j \neq i\}$ but $\psi_i^{\otimes 3} \not\in \text{span}\{\psi_j^{\otimes 3}\}$, then one can show, by first extending to a cyclic group of prime order and extending $f_j$ so that it is zero outside $[N]$, using only Cauchy-Schwarz that if $f_j$ were all bounded by $1$, then
$$|\Lambda_{\Psi, N}(f_j)| \ll \min_i \|f_i\|_{U^3([N])}^c.$$
Since the proof uses only finitely many Cauchy-Schwarz steps (which one can quantify based on $\|\Psi\|_N$), one obtains
\begin{equation}
|\Lambda_{\Psi, N}(f_j)| \ll L^{O_{\|\Psi\|_N, k}(1)} \min_i \|f_i\|_{U^3([N])}^{c_{\|\Psi\|_N, k}} \tag{6}
\end{equation}
whenever $f_j$ is $L$-bounded. \\\\
Let $K \subseteq [-N', N']^d$ be a convex body with $10N' \le N$ (we note that we are breaking our convention we set in Section 2 by requiring $[-N', N']$ to be the continuous interval rather than the discrete interval). Let
$$\Lambda_{\Psi, N, K}(f_j) = \mathbb{E}_{v \in [N]^d \cap K} \prod_{i = 1}^k f_i(\psi_i(v)).$$
Define the local factor
$$\beta_q = \Lambda_{\Psi, N}((\Lambda_{\mathbb{Z}/q\mathbb{Z}})_{i = 1}^k)$$
where $\Lambda_{\mathbb{Z}/q\mathbb{Z}}(x) = \frac{q}{\phi(q)}$ if $(x, q) = 1$ and is $0$ otherwise. Let
$$\beta_\infty = \text{vol}(K \cap \Psi^{-1}(\mathbb{R}^k_+)).$$
\begin{theorem}
Let $\Psi$ be affine forms such that no two $\psi_i$ differ by a constant. Suppose $\Psi$ has true complexity at most two at each $\psi_i$, the forms of $\Psi$ are not linearly (rationally) independent, and $\|\Psi\|_N \le L$. Then
$$\Lambda_{\Psi, N, K}((\Lambda)_{j = 1}^k) = \beta_\infty \prod_{p \text{ prime}} \beta_p + O_{L, d, k, A}(\log^{-A}(N))$$
where the constant is ineffective in $A$.
\end{theorem}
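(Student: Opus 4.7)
The plan is to decompose $\Lambda = \Lambda_Q + (\Lambda_{Siegel} - \Lambda_Q) + (\Lambda - \Lambda_{Siegel})$ and exploit the bound $\|\Lambda - \Lambda_Q\|_{U^3([N])} \ll_A^{\text{ineff}} \log^{-A}(N)$, obtained by combining \thref{maintheorem} (for $\Lambda - \Lambda_{Siegel}$) with the corresponding bound on $\|\Lambda_{Siegel} - \Lambda_Q\|_{U^3([N])}$ from \thref{TT1}. I will then swap each copy of $\Lambda$ in $\Lambda_{\Psi, N, K}((\Lambda)_j)$ with $\Lambda_Q$ by telescoping through a generalized von Neumann inequality, and afterwards compute the resulting $\Lambda_Q$-main term by a sieve-theoretic calculation to identify it with $\beta_\infty \prod_p \beta_p$.

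The first substantive step is to extend the Manners-type estimate (6) to accommodate the convex body weight: for $L$-bounded $f_1, \ldots, f_k$,
$$|\Lambda_{\Psi, N, K}(f_1, \ldots, f_k)| \ll_{L, d, k, \|\Psi\|_N} \min_i \|f_i\|_{U^3([N])}^{c}.$$
Using \thref{fourierconvex}, I would expand $1_K$ as $\sum_i a_i e(\xi_i \cdot v / 4N) + g$ with $\|g\|_{L^1} \le \epsilon$ and $\sum_i |a_i| \le O_d(1/\epsilon)^{O_d(1)}$; the $g$-contribution is at most $\epsilon L^k$, and for each Fourier mode the bound $|\mathbb{E}_{v \in [N]^d} e(\xi \cdot v / 4N) \prod_j f_j(\psi_j(v))| \ll_{L, d, k, \|\Psi\|_N} \|f_i\|_{U^3}^c$ follows from running Manners' Cauchy-Schwarz chain for (6), since the bounded phase $e(\xi \cdot v/4N)$ plays no role in the combinatorial extraction of the true-complexity-2 structure of the $\psi_j$'s. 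Choosing $\epsilon$ a small power of $\|f_i\|_{U^3([N])}$ and telescoping then yields
$$\Lambda_{\Psi, N, K}((\Lambda)_j) = \Lambda_{\Psi, N, K}((\Lambda_Q)_j) + O_{L, d, k, A}^{\text{ineff}}(\log^{-A}(N)),$$
since $\Lambda, \Lambda_Q \ll \log N$, so the $\log^{O(1)}(N)$ factor incurred by each swap is absorbed by taking $A$ large.

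For the main term, since $\Lambda_Q(n) = \tfrac{P(Q)}{\phi(P(Q))} 1_{(n, P(Q)) = 1} 1_{n > 0}$,
$$\Lambda_{\Psi, N, K}((\Lambda_Q)_j) = \left(\frac{P(Q)}{\phi(P(Q))}\right)^k \mathbb{E}_{v \in [N]^d \cap K} \prod_{p < Q}\prod_i 1_{p \nmid \psi_i(v)} \cdot 1_{\Psi(v) \in \mathbb{R}_+^k}.$$
The inner product depends on $v$ only through its reductions modulo each individual prime $p < Q$, so a standard sieve argument applied prime by prime (direct CRT equidistribution modulo $P(Q)$ being unavailable because $\log P(Q) \sim Q \gg \log N$) together with an application of \thref{fourierconvex} to the cone $\Psi^{-1}(\mathbb{R}_+^k) \cap K$ yields
$$\mathbb{E}_{v \in [N]^d \cap K \cap \Psi^{-1}(\mathbb{R}_+^k)} \prod_{p < Q}\prod_i 1_{p \nmid \psi_i(v)} = \prod_{p < Q} \mathbb{E}_{v \in (\mathbb{Z}/p\mathbb{Z})^d} \prod_i 1_{p \nmid \psi_i(v)} + O(\log^{-A}(N)),$$
which, after restoring the volume ratio $\text{vol}(K \cap \Psi^{-1}(\mathbb{R}_+^k))/\text{vol}(K)$, multiplying by the prefactor $\prod_{p < Q}(p/(p-1))^k$, identifying the result with $\prod_{p < Q}\beta_p$, and completing to an infinite Euler product via $\prod_{p \ge Q}\beta_p = 1 + O(1/\log Q)$ (a standard tail bound that uses the linear-dependence hypothesis on $\Psi$ to guarantee convergence), produces the desired $\beta_\infty \prod_p \beta_p$ up to acceptable error.

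The main obstacle is the first step, namely extending the generalized von Neumann inequality to carry a convex body weight without sacrificing the true-complexity-2 gain. The Cauchy-Schwarz reductions underlying Manners' bound must absorb the Fourier-expanded phases $e(\xi \cdot v / 4N)$ in a way that still extracts a $U^3$ norm of some $f_i$; because each such phase factors as a product of characters in single coordinates, this should be feasible but demands careful bookkeeping of the $L$ and $1/\epsilon$ losses accrued at each Cauchy-Schwarz step. A secondary difficulty is verifying the sieve equidistribution in the main term computation, where one must control errors summed over primes $p < Q$ against the mild regularity of $K$.
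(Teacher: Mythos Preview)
Your overall strategy matches the paper's: Fourier-expand $1_K$, establish a generalized von Neumann inequality for $\Lambda_{\Psi,N,K}$, telescope from $\Lambda$ to $\Lambda_Q$ using the $U^3$ bounds, and compute the $\Lambda_Q$ main term. The telescoping (you combine \thref{maintheorem} and \thref{TT1} into a single $\|\Lambda-\Lambda_Q\|_{U^3}$ bound; the paper does the two steps $\Lambda\to\Lambda_{Siegel}\to\Lambda_Q$ separately) and the main term (you sketch a sieve; the paper simply cites \cite[Proposition 5.2]{TT}) are equivalent.

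The one substantive difference is exactly the step you flag as the main obstacle: how to carry the Fourier phases $e(\alpha_i\cdot n)$ through the true-complexity bound. You propose to push each phase through Manners' Cauchy--Schwarz chain as an inert bounded factor. The paper instead uses a dichotomy that sidesteps this entirely: if $N\alpha_i\cdot n$ lies in the $\mathbb{Q}$-span of the $\psi_j$, write it as a linear combination and absorb $e(\alpha_i\cdot n)$ multiplicatively into the $f_j(\psi_j(n))$ (this does not change any $\|f_j\|_{U^3}$ by part~2 of \thref{gowersproperties}); if $N\alpha_i\cdot n$ is not in that span, then averaging over the direction in $(\mathbb{Z}/4N\mathbb{Z})^d$ independent of the $\psi_j$ kills the term by orthogonality. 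This is where the hypothesis that the $\psi_j$ are linearly dependent is used---it guarantees the span is a proper subspace, so the out-of-span case is not vacuous. The paper's trick is cleaner and avoids the bookkeeping you anticipated; your route via Manners' chain is plausible but, as you note, would require verifying that the extra phase survives each Cauchy--Schwarz and change of variables intact.
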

\begin{proof}
We first make a reduction to eliminate $K$. To do so, we apply \thref{fourierconvex} and \thref{FourierExpansion} to Fourier expand
$$1_K(n) = \sum_i a_i e(\alpha_i \cdot n) + O(\log^{-A}(N))$$
where $\sum_i |a_i| \ll \log^{O_{k, A}(1)}(N)$. If $N(\alpha_i \cdot n)$ lies in the span of $(\psi_j)_{j = 1}^k$, then we may write $N \alpha_i \cdot n$ into a linear combination of $\psi_j$ so $e(\alpha_i \cdot n)$ gets factored into a multiplicative combination of $e(\psi_j(n)/N)$. These terms may be absorbed in $f_j(\psi_j(n))$. They don't contribute to the $U^2$ or $U^3$ Gowers norm of $f_j$ because of part 2 of \thref{gowersproperties}. If $N(\alpha_i \cdot n)$ does not lie in the span of $(\psi_j)_{j = 1}^k$, then by orthogonality,
$$\mathbb{E}_{n \in [N]} e(\alpha_i \cdot n) \prod_{j = 1}^k f_j(\psi_j(n)) = 0.$$
Thus, by (6) and by part 2 of \thref{gowersproperties}, it follows that
\begin{equation}
|\Lambda_{\Psi, N, K} (\Lambda)_{j = 1}^k| \ll \min_{j} \log^{O_{k, A}(1)}(N)\|f_j\|_{U^3([N])}^{c_{L}} + O(\log^{-A}(N)). \tag{7}
\end{equation}
Letting $\Lambda_{Siegel}$ be the approximant of $\Lambda$ we defined, we may write
$$\Lambda_{\Psi, N, K}((\Lambda)_{j = 1}^k) - \Lambda_{\Psi, N, K}((\Lambda_{Siegel})_{j = 1}^k)$$
as a sum of $\Lambda_{\Psi, N, K}((g_j)_{j = 1}^k)$ where at least one of $g_j$ is equal to $\Lambda - \Lambda_{Siegel}$ and the rest of the terms are $O(\log(N))$-bounded. By \thref{maintheorem}, and (7), it follows that each of the
$$|\Lambda_{\Psi, N, K}((g_j)_{j = 1}^k)| \ll_{A, k, L} \log^{-A}(N).$$
We may also write $\Lambda_{Siegel} = \Lambda_Q + n^{\beta - 1}\frac{\phi(P(Q))}{P(Q)} 1_{(n, P(Q)) = 1}$. In addition, we may separate 
$$\Lambda_{\Psi, N, K}((\Lambda_{Siegel})_{j = 1}^k) - \Lambda_{\Psi, N, K}((\Lambda_Q)_{j = 1}^k)$$
into $O_k(1)$ terms of the form $\Lambda_{\Psi, N, K}((h_j)_{j = 1}^k)$, each of which has at least one $h_j$ that is of the form $\Lambda_{Siegel} - \Lambda_Q$ with the rest of the terms $O(\log(N))$-bounded. Then \thref{TT1} and (6) gives us that each
$$|\Lambda_{\Psi, N, K}((h_j)_{j = 1}^k)| \ll_{A, k, L} \log^{-A}(N).$$
Finally, \cite[Proposition 5.2]{TT} gives us
$$\Lambda_{\Psi, N, K}((\Lambda_Q)_{j = 1}^k) = \beta_\infty \prod_{p \text{ prime}} \beta_p + O_{L, d, k}(\exp(-O(\log^{1/O(1)}(N)))).$$
Putting everything together gives us the desired result.
\end{proof}
\subsection{Three Term Progressions with Shifted Prime Difference}
Let $w = \log^\epsilon(N)$ and $W = P(w)$. For a function $f$, we define the $W$-tricked $f$ at modulus $b$, denoted $f_{W, b}(n) := \frac{\phi(W)}{W}f(Wn + b)$. In this section, we will assume that Siegel zeros don't exist, so $\mu_{Siegel} = 0$ and $\Lambda_{Siegel} = \Lambda_{Q}$. It's possible that more effort can make results in this section unconditional, but we have chosen not to do so here.
\begin{lemma}\thlabel{wtrick}
If $\epsilon$ is sufficiently small,
$$\|(\Lambda)_{W, 1} - 1\|_{U^3([N/W])} \ll w^{-c}.$$
\end{lemma}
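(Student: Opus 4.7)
The plan is to split $\Lambda = \Lambda_Q + (\Lambda - \Lambda_Q)$, which under the no-Siegel-zero hypothesis of this subsection coincides with $\Lambda_{Siegel} + (\Lambda - \Lambda_{Siegel})$, and bound the $U^3$ norm of each $W$-tricked summand separately on $[N/W]$. For $(\Lambda - \Lambda_Q)_{W,1}$, \thref{maintheorem} gives $\|\Lambda - \Lambda_Q\|_{U^3([N])} \ll \exp(-O(\log^{1/C}(N)))$; a routine change of variables unfolding the definition of the Gowers norm (substitute $n \mapsto Wn + 1$ and $h_i \mapsto W h_i$, and compare the resulting restricted averages with averages on all of $[N]$) shows that the $W$-tricked $U^3$ norm exceeds the original only by a factor polynomial in $W$. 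Since $W = \exp(O(\log^\epsilon(N)))$ with $\epsilon$ sufficiently small relative to $1/C$, this yields $\|(\Lambda - \Lambda_Q)_{W,1}\|_{U^3([N/W])} \ll \exp(-O(\log^{1/C}(N))) \ll w^{-c}$.

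The remaining and main task is to bound $\|(\Lambda_Q)_{W,1} - 1\|_{U^3([N/W])}$. Unwinding the definition,
\[ (\Lambda_Q)_{W,1}(n) = \frac{R}{\phi(R)}\indic{(Wn+1, R) = 1}, \qquad R := P(Q)/W = \prod_{w \le p < Q} p. \]
The key input is the \emph{linear forms condition} at the $U^3$ cube: for each nonempty $T \subseteq \{0,1\}^3$,
\[ \mathbb{E}_{x, h_1, h_2, h_3 \in [N/W]} \prod_{\omega \in T} (\Lambda_Q)_{W,1}(x + \omega \cdot h) = 1 + O(w^{-c}). \]
Given this, expanding $\|(\Lambda_Q)_{W,1} - 1\|_{U^3([N/W])}^8$ as $\sum_{T \subseteq \{0,1\}^3} (-1)^{8-|T|} (1 + O(w^{-c}))$ and invoking $\sum_{T} (-1)^{8-|T|} = 0$ forces the main terms $1$ to cancel, leaving only $O(w^{-c})$.

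To establish the linear forms condition, I would exploit the Euler factorization
\[ (\Lambda_Q)_{W,1}(n) = \prod_{w \le p < Q}(1 + g_p(n)), \qquad g_p(n) := \tfrac{1}{p-1} - \tfrac{p}{p-1}\indic{p \mid Wn+1}, \]
where each $g_p$ is mean-zero and depends only on $n \bmod p$. By the Chinese Remainder Theorem, the expectation in the linear forms condition factors over primes as $\prod_{w \le p < Q} \mathbb{E}_p(T)$, where each local factor is the expectation modulo $p$ of $\prod_{\omega \in T}(1 + g_p(x + \omega \cdot h))$. For $p > |T|$ the linear forms $\{x + \omega \cdot h : \omega \in T\}$ are generically distinct $\bmod p$ and a direct calculation gives $\mathbb{E}_p(T) = 1 + O(p^{-2})$; for $p \le |T|$ the factor is $O(1)$. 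Multiplying then gives $1 + O(w^{-1})$, which yields the claim with $c = 1$.

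The main obstacle is the careful verification of the local factor estimate $\mathbb{E}_p(T) = 1 + O(p^{-2})$ --- in particular, handling degenerate configurations $\bmod p$ where two of the linear forms coincide --- and the approximation of averages over $[N/W]$ by averages over residue classes modulo $R$, which may be addressed by truncating the product at primes $p \le (N/W)^{o(1)}$, beyond which the contribution is negligible. Fortunately, this is a standard sieve-theoretic computation, essentially identical to the verification of the linear forms condition for the Green-Tao truncated divisor sum majorant in \cite{GT1}, and may be imported directly.
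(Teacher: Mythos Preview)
Your proposal is correct and follows essentially the same decomposition as the paper: split $(\Lambda)_{W,1} - 1$ into $(\Lambda_Q)_{W,1} - 1$ and $(\Lambda - \Lambda_Q)_{W,1}$, bound the first via the linear forms condition for the Cram\'er-type model (the paper simply cites \cite[Proposition 5.3]{TT} for this, whereas you sketch its proof), and bound the second using \thref{maintheorem} together with a polynomial-in-$W$ loss when passing to the sub-progression. The only cosmetic difference is that the paper obtains that polynomial-in-$W$ loss by Fourier expanding $1_{n \equiv 1 \pmod W}$ and invoking part~2 of \thref{gowersproperties}, rather than by your direct change-of-variables comparison.
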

\begin{proof}
By \cite[Proposition 5.3]{TT}, we see that
$$\|(\Lambda_{Q})_{W, 1} - 1\|_{U^3([N/W])} \ll w^{-c}$$
so it suffices to show that
$$\|(\Lambda - \Lambda_Q)_{W, 1}\|_{U^3([N/W])} \ll w^{-c}.$$
We will actually show that
$$\|(\Lambda - \Lambda_Q)_{W, 1}\|_{U^3([N/W])} \ll W\exp(-O(\log^{1/C}(N))).$$
We Fourier expand $1_{n \equiv 1 \pmod{W}} = \mathbb{E}_{a \in [W]} e\left(\frac{a(n - b)}{W}\right)$, and using the triangle inequality and part 2 of \thref{gowersproperties}. We thus obtain
$$\|(\Lambda - \Lambda_Q)_{W, 1}\|_{U^3([N/W])} \ll  W\|\Lambda - \Lambda_Q\|_{U^3([N])} \ll W\exp(-O(\log^{1/C}(N))).$$
Thus, if $\epsilon$ is sufficiently small, we have that 
$$\|(\Lambda)_{W, 1} - 1\|_{U^3([N/W])} \ll w^{-c}.$$
\end{proof}
Using this, we show the following:
\begin{theorem}
Assume $\beta$ (the Siegel zero) doesn't exist. Suppose a subset $A$ of $[N]$ does not have any $3$ term arithmetic progressions with shifted prime common difference. Then there exists $c > 0$ such that
$$|A| \ll N\exp(-O(\log\log(N)^{c})).$$
\end{theorem}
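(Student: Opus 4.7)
My plan is to apply the $W$-trick, as set up in \thref{wtrick}, in order to convert the hypothesis on $A$ into an upper bound on the count of $3$-APs with common difference restricted to a multiple of $W$. With $w = \log^\epsilon(N)$ and $W = P(w)$, the hypothesis that $A$ contains no non-trivial $3$-AP with shifted prime common difference implies, in particular, that it contains no such $3$-AP with $d = Wn$ and $Wn + 1$ prime. Writing this out and normalizing, the weighted count
\begin{equation*}
T_A \;:=\; \mathbb{E}_{x \in [N]} \mathbb{E}_{n \in [N/(3W)]} 1_A(x)\, 1_A(x+Wn)\, 1_A(x+2Wn)\, \Lambda_{W,1}(n)
\end{equation*}
vanishes, up to a negligible contribution from prime powers.

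Next I would compare $T_A$ with the unweighted restricted count
\begin{equation*}
U_A \;:=\; \mathbb{E}_{x \in [N]} \mathbb{E}_{n \in [N/(3W)]} 1_A(x)\, 1_A(x+Wn)\, 1_A(x+2Wn).
\end{equation*}
The four affine forms $x$, $x + Wn$, $x + 2Wn$, $n$ satisfy the property that any pair among the first three rationally spans the fourth, so the Cauchy--Schwarz complexity of the $n$-form is exactly $2$. Applying the generalized von Neumann inequality (two Cauchy--Schwarz steps) with $f_1 = f_2 = f_3 = 1_A$ and $f_4 = \Lambda_{W,1} - 1$, I obtain
\begin{equation*}
|T_A - U_A| \;\ll\; \|\Lambda_{W,1} - 1\|_{U^3([N/W])} \;\ll\; w^{-c},
\end{equation*}
where the last bound is \thref{wtrick}; this is the only place in the argument where the assumed absence of a Siegel zero (so that $\Lambda_{Siegel} = \Lambda_Q$) is used. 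Combining with $T_A = 0$ yields $U_A \ll w^{-c}$.

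To conclude, I would translate the bound on $U_A$ into a bound on $|A|$. Decomposing $x = Wy + k$ with $y \in [N/W]$ and $k \in [W]$, and setting $B_k := \{y : Wy + k \in A\}$ with density $\alpha_k := W|B_k|/N$, one checks that $U_A$ equals $\mathbb{E}_{k \in [W]} \delta_k$, where $\delta_k$ is the normalized $3$-AP density of $B_k$ inside $[N/W]$. By Markov, all but an $O(w^{-c/2})$-fraction of $k \in [W]$ satisfy $\delta_k \ll w^{-c/2}$. The Varnavides-type consequence of the Kelley--Meka quantitative Roth theorem gives $\delta_k \gg \exp(-O(\log^{12}(1/\alpha_k)))$, which combined with the previous inequality yields $\alpha_k \ll \exp(-c_1 (\log\log N)^{1/12})$ for this proportion of $k$, whence $|A|/N = \mathbb{E}_k \alpha_k \ll \exp(-c_2 (\log\log N)^{1/12})$. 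The chief obstacle is the generalized von Neumann reduction: the complexity-$2$ nature of the weight variable in the $3$-AP pattern is exactly matched by the $U^3$ control of $\Lambda_{W,1} - 1$ furnished by \thref{wtrick}, so the argument goes through at the cost of only absolute constants in the exponent.
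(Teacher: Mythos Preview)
Your argument is correct and follows essentially the same strategy as the paper: apply the $W$-trick to the common difference, use the generalized von Neumann inequality together with \thref{wtrick} to replace the weight $\Lambda_{W,1}$ by $1$, and then invoke the Varnavides amplification of Kelley--Meka to pass from a $3$-AP density bound to a set density bound.

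The one structural difference is the placement of the pigeonhole step. The paper pigeonholes \emph{first}, fixing a residue $b \pmod W$ with $A' = \{n : Wn+b \in A\}$ of density at least $\delta$, and then runs von Neumann and Varnavides on the single set $A' \subset [N/W]$; this yields $c(3,\delta) \ll w^{-c}$ directly. You instead apply von Neumann to the full average over $x \in [N]$, obtain $U_A \ll w^{-c}$, and only afterwards split into residue classes $B_k$ and use Markov's inequality to control most $\alpha_k$. Your route works but costs you a square root (you get $\delta_k \ll w^{-c/2}$ rather than $w^{-c}$) and requires the extra observation that the exceptional fraction $w^{-c/2} = \exp(-\Theta(\log\log N))$ is dominated by $\exp(-(\log\log N)^{1/12})$; the paper's ordering is slightly cleaner in that it avoids both of these points. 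A small quibble: the complexity of the $n$-form being $2$ means three Cauchy--Schwarz steps (not two) are needed to reach the $U^3$ norm, though this does not affect the argument.
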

\begin{proof}
We will follow the proof of \cite[Theorem 1.8]{TT}. Let $A$ be a subset of $[N]$ with density $\delta$. Let $\Lambda'$ be the restriction of $\Lambda$ to the primes. We wish to estimate
$$\frac{1}{\log(N)}\mathbb{E}_{n, d \in [N]} 1_A(n)1_A(n + d)1_A(n + 2d) \Lambda'(d + 1).$$
We will apply the $W$ trick to $n$ and $d$ in order for us to apply \thref{wtrick}. Applying the pigeonhole principle, there exists some modulus $b$ such that $A' := \{n: Wn + b \in A\}$ has size at least $\frac{\delta N}{W}$. Restricting to the progression $W \cdot [\cdot] + b$, we see that the number of three term progressions with shifted prime common difference is at least
$$\frac{1}{\log(N)} \sum_{n, d \le N/W} 1_{A'}(n) 1_{A'}(n + d) 1_{A'}(n + 2d) \Lambda'(Wd + 1).$$
Since $\|\Lambda - \Lambda'\|_{L^1} \ll N^{-1/2 + o(1)}$ it follows that
$$\|(\Lambda')_{W, 1} - 1\|_{U^3([N/W])} = \|(\Lambda)_{W, 1} - 1\|_{U^3([N/W])} + O(N^{-1/2 + o(1)}) \ll w^{-c}.$$
Thus, using the generalized von Neumann theorem as in \cite[Lemma 5.2]{TT2}, we can estimate the above sum as
$$\frac{W}{\phi(W)\log(N)} \left(\sum_{n, d \in [N/W]} 1_{A'}(n) 1_{A'}(n + d) 1_{A'}(n + 2d) + O((N/W)^2w^{-c})\right).$$
By Varnavides' trick \cite{Va}, we can estimate a lower bound on the density of three term arithmetic progressions which we denote $c(3, \delta)$ based on $r_3(N)$, the least density a set must be before it contains a non-trivial three-term arithmetic progressions as follows: letting $N_3(\alpha)$ denote the smallest positive integer such that for any $m \ge N_3(\alpha)$ any subset of $[m]$ of size at least $\alpha m$ contains a nontrivial three term arithmetic progression. One can take $N_3(\alpha)$ to be the smallest integer $N$ such that $r_3(N) \le \alpha N$. Then by a dilation argument due to Varnavides in \cite{Va}, we can take $c(3, \alpha) \ge \frac{\alpha^2}{16N_3(\alpha/2)^3}$. It follows that the number of three term arithmetic progressions with shifted primes is at least (for $N/W > 2N_3(\delta/2)$)
$$\frac{W}{\phi(W)\log(N)}\left(c(3, \delta)\left(\frac{N}{W}\right)^2 + O\left(\frac{N^2}{W^2 w^{c}}\right)\right).$$
If $c(3, \delta)\gg w^{-c}$, then $A$ has a three term arithmetic progression with shifted prime difference. By \cite{KM}, we can take $N_3(\delta/2) \ll \exp(O(\log^{O(1)}(1/\delta)))$ so 
$$c(3, \delta) \gg \exp(-O(\log^{O(1)}(1/\delta))).$$ 
This gives us that if $A$ has no three term arithmetic progressions of shifted primes, then $A$ has density at most $\exp(-O(\log\log(N)^{c}))$. 
\end{proof}
\begin{remark}
Unconditionally, using the above method, one can show $N\exp(-O(\log\log\log^{c}(N)))$ as done by \cite{TT}. This is because of a technical issue with the $W$ trick that the author encountered while trying to improve \cite[Theorem 2.5]{TT}: since
$$\|(\Lambda_{Siegel} - \Lambda_Q)_{W, 1}\|_{U^3([N/W])} = \|(\Lambda_{Q} (\cdot)^{\beta - 1} \chi_{Siegel})_{W, 1}\|_{U^3([N/W])}$$
it follows that if $W$ is too large, it's possible that $(\chi_{Siegel})_{W, 1}$ is constant leading to an estimate of $W^{c(\beta - 1)}$ of the above expression, which does not seem useful since Siegel's theorem is ineffective.
\end{remark}
It seems in view of \thref{maintheorem}, \cite{KM}, \cite{RS}, and \cite{W} that one can unconditionally hope for quasi-polynomial bounds for sets which lack three term arithmetic progressions of shifted prime common difference. The lack of such bounds in this section seems more of a limitation of the $W$ trick than anything else and it seems possible that other methods such as those of \cite{RS} and \cite{W} could yield significantly better bounds.

\appendix

\section{Sanders' improvement of the $U^3$ inverse theorem for $\mathbb{Z}/N\mathbb{Z}$}
In this section, we shall provide an argument for the following:
\begin{theorem}[Sanders \cite{S}, Green-Tao \cite{GT6}]\thlabel{sandersinversetheorem}
Let $f \colon \mathbb{Z}/N\mathbb{Z} \to \mathbb{C}$ be a function with $\|f\|_{L^{1024}(\mathbb{Z}/N\mathbb{Z})} \le L$, and $0 < \delta \ll 1$. Suppose $\|f\|_{U^3(\mathbb{Z}/N\mathbb{Z})} \ge \delta$. Then there exists a Bohr set $B(S, \rho)$, a locally quadratic form $\phi\colon B(S, 1000\rho) \to \mathbb{R}/\mathbb{Z}$ such that
$$\mathbb{E}_{x} |\mathbb{E}_{h \in B(S, \rho)} f(x + h) e(-\phi(h) - \xi(x) \cdot h)| \gg (\delta/L)^C$$
such that the Bohr set satisfies
$$|S| \le \log(L/\delta)^{C'}, \rho \ge \left(\frac{\delta}{LD}\right)^{C''}.$$
Here, $C, C', C'', D$ are absolute constants.
\end{theorem}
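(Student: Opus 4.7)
The plan is to follow the standard Gowers--Green--Tao local $U^3$ inverse scheme, replacing every appeal to a polynomial Freiman/Bogolyubov--type theorem by Sanders' quasi-polynomial version, which is what drops the dimension of $S$ from polynomial to polylogarithmic in $L/\delta$. First I would unfold the $U^3$ norm via the identity $\|f\|_{U^3}^8 = \mathbb{E}_h \|\Delta_h f\|_{U^2}^4$ where $\Delta_h f(x) := f(x+h)\overline{f(x)}$, and conclude by Markov that for $h$ in a set $H \subseteq \mathbb{Z}/N\mathbb{Z}$ of density $\gg \delta^{O(1)}$ one has $\|\Delta_h f\|_{U^2} \ge \delta^{O(1)}$. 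The Plancherel form of the $U^2$ inverse theorem, together with $\|f\|_{L^{1024}} \le L$, then produces a function $\xi : H \to \widehat{\mathbb{Z}/N\mathbb{Z}}$ such that $|\widehat{\Delta_h f}(\xi(h))| \gg (\delta/L)^{O(1)}$ for every $h \in H$.

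Next I would show that the graph $\Gamma := \{(h, \xi(h)) : h \in H\}$ has large additive energy in $\mathbb{Z}/N\mathbb{Z} \times \widehat{\mathbb{Z}/N\mathbb{Z}}$. The standard way is a Cauchy--Schwarz manipulation (sometimes called the ``$2$-step relation''): one rewrites
\[
\mathbb{E}_{h_1,h_2,h_3,h_4 : h_1+h_2=h_3+h_4} \prod_i \widehat{\Delta_{h_i}f}(\xi(h_i))
\]
(with appropriate conjugates) as a Gowers-type correlation of $f$ with itself, which by Cauchy--Schwarz is bounded in terms of $\|f\|_{U^3}$ and $L$. Combined with the pointwise lower bound on each factor, this forces $\gg (\delta/L)^{O(1)} N^3$ additive quadruples $(h_1,h_2,h_3,h_4) \in H^4$ with $h_1+h_2=h_3+h_4$ \emph{and} $\xi(h_1)+\xi(h_2)=\xi(h_3)+\xi(h_4)$, i.e.\ $\Gamma$ has additive energy $\gg (\delta/L)^{O(1)} |\Gamma|^3$. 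Balog--Szemer\'edi--Gowers then extracts a subset $\Gamma' \subseteq \Gamma$ of density $\gg (\delta/L)^{O(1)}$ with doubling $|{\Gamma' + \Gamma'}| \le K|\Gamma'|$ for $K = (L/\delta)^{O(1)}$.

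The crucial step, and the one which provides the quantitative improvement, is then to apply Sanders' quasi-polynomial Bogolyubov--Ruzsa theorem \cite{S} to $\Gamma'$: this produces a Bohr set $B(S_0, \rho_0) \subseteq 2\Gamma' - 2\Gamma'$ in the product group with $|S_0| \le \log^{O(1)}(L/\delta)$ and $\rho_0 \ge \log^{-O(1)}(L/\delta)$. Projecting onto the $h$-coordinate gives a Bohr set $B(S,\rho)$ in $\mathbb{Z}/N\mathbb{Z}$ of the advertised dimension and radius on which $\xi$ behaves like a Freiman $8$-homomorphism. A symmetry/averaging argument (as in \cite{GT6}) upgrades this approximate homomorphism to a genuine \emph{linear} function: there is a map $h \mapsto \Gamma h$ such that $\xi(h)$ agrees with $\Gamma h$ on a refinement of $B(S,\rho)$ up to a set of small measure; the point is to symmetrize the bilinear form $(a,b) \mapsto \xi(a)\cdot b - \xi(b)\cdot a$ and use Sanders' structure theorem again to force it to vanish. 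Once $\xi$ is linear on a Bohr set, one defines $\phi$ by local integration of $\xi$ via the formula $\phi(h) := \tfrac12 \xi(h)\cdot h$ (or the discrete analogue), which is automatically locally quadratic on $B(S, 1000\rho)$ after a further shrinking, and substituting back into the correlation $\mathbb{E}_x |\mathbb{E}_{h \in B(S,\rho)} f(x+h)\, e(-\phi(h) - \xi(x)\cdot h)|$ gives the claimed lower bound by unpacking the definitions of $\xi(h)$ and $\Delta_h f$.

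The main technical obstacle is the linearization step: turning the $\xi$ that is initially only an approximate Freiman homomorphism on a ``graph Bohr set'' into a genuinely linear function on an honest Bohr set in $\mathbb{Z}/N\mathbb{Z}$ itself, while keeping both $|S|$ polylogarithmic and $\rho$ at least quasi-polynomial. This is where one must invoke Sanders' theorem not once but in both the Bogolyubov step and in the antisymmetrization/symmetrization steps, carefully tracking losses so that none of them becomes polynomial in $1/\delta$; the $L^{1024}$ hypothesis on $f$ is used throughout to bound the relevant correlation sums produced by the Cauchy--Schwarz manipulations. The remaining ingredients---checking that $\phi$ genuinely extends to $B(S, 1000\rho)$ as a locally quadratic form, and that the final correlation estimate survives the various refinements of the Bohr set---are routine and follow the template in \cite{GT6}.
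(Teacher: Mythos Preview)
Your proposal is correct and follows essentially the same Green--Tao scheme the paper uses, with Sanders' Bogolyubov--Ruzsa theorem inserted at the linearization and symmetry steps. One minor organizational difference: the paper applies Sanders' lemma directly to the projection $H' \subseteq \mathbb{Z}/N\mathbb{Z}$ (after first refining via \cite[Lemma~9.2]{GT7} so that $4\Gamma' - 4\Gamma'$ is a graph) rather than to $\Gamma'$ inside the product group $\mathbb{Z}/N\mathbb{Z} \times \widehat{\mathbb{Z}/N\mathbb{Z}}$ as you suggest, thereby sidestepping the need to argue that the projection of a Bohr set (or coset progression) from the product group is again a Bohr set in $\mathbb{Z}/N\mathbb{Z}$ with the right parameters.
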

For completeness sake, we shall provide a fleshed out version of the argument suggested in \cite{GT6}, which relies on the following tool:
\begin{lemma}[Bogolyubov-Ruzsa lemma \cite{S}]\thlabel{BogolyubovRuzsa}
Suppose $G$ be a discrete abelian group and $A, S \subseteq G$ be subsets of $G$ such that
$$|A + S| \le K \min(|A|, |S|).$$
Then $(A - A) + (S - S)$ contains a proper symmetric $d(K)$-dimensional progression with size $\exp(-h(K))|A + S|$ where
$$d(K) = O(\log^{6}(2K)), \hspace{0.1in} h(K) = O(\log^6(2K)\log(2\log(K))).$$
\end{lemma}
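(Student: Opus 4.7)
The plan is to follow Sanders' iterative scheme, which combines Plünnecke--Ruzsa type sumset calculus, the Croot--Sisask almost-periodicity lemma, and a Chang-type spectral argument in order to produce a large Bohr set inside $(A-A)+(S-S)$, and then convert that Bohr set into a proper coset progression via the geometry of numbers. The starting observation is that from the hypothesis $|A+S| \le K\min(|A|,|S|)$, the Plünnecke--Ruzsa inequalities yield control over all iterated sumsets $nA - mA + n'S - m'S$ by powers of $K$, which is the quantitative fuel needed for the rest of the argument.

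First I would normalise by passing (via Ruzsa covering) to sets of comparable size and iterate the Plünnecke--Ruzsa estimates to bound $|kA-kA+\ell S-\ell S|$ by $K^{O(k+\ell)}\min(|A|,|S|)$. Next I would apply Croot--Sisask almost periodicity to the indicator of $A$ convolved with $S$: for any $p$ and any $\epsilon>0$ there exists a symmetric set $T \subseteq k(A-A)$ of density $\exp(-O_p(K^{o(1)}\log(1/\epsilon)))$ such that $\|1_A * 1_S(\cdot + t) - 1_A * 1_S\|_{L^p} \le \epsilon \|1_A * 1_S\|_{L^p}$ uniformly in $t \in T$. This produces many almost-periods of $1_A * 1_S$ living inside a bounded iterate of $A-A$.

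The second main step is the spectral conversion: by Chang's theorem applied to $1_A*1_S$, the set of large Fourier coefficients is contained in the span of a set of size $O(\log^{O(1)} K)$, and combining this with the Croot--Sisask almost-period set $T$ one shows $T-T$ contains a Bohr set $B(\Gamma,\rho)$ with $|\Gamma| = O(\log^{O(1)} K)$ and $\rho = \exp(-O(\log^{O(1)} K))$. Because $T-T \subseteq 2k(A-A)$, one obtains a Bohr set inside a controlled iterate of $A-A$, and the nontrivial trick (due to Sanders) is to then use the covering properties of sets with small doubling to ``descend'' this Bohr set into $(A-A)+(S-S)$ itself rather than a higher iterate. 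Finally, one applies the standard geometry-of-numbers argument (e.g. Mahler's theorem together with the John-type lemma for Bohr sets) to realise the Bohr set as containing a proper symmetric generalised arithmetic progression of the claimed dimension $d(K)=O(\log^6(2K))$ and size $\exp(-h(K))|A+S|$.

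The main obstacle, and the source of the polylogarithmic exponents rather than polynomial ones, is the iteration: each pass through the Croot--Sisask/Chang loop costs a power of $\log K$, and one must carefully arrange the bootstrap so that the exponent does not blow up. Sanders resolves this by running a ``relative polynomial Freiman--Ruzsa'' style iteration in which the dimension parameter improves by a constant factor at each stage until it stabilises at $O(\log^6(2K))$; the density loss $\exp(-h(K))$ is the accumulated cost of these $O(\log\log K)$ iterations, which explains the extra $\log(2\log K)$ factor in $h(K)$. All the other steps are essentially off-the-shelf applications of sumset calculus and Fourier-analytic/geometric tools, but this iteration is where the real work happens.
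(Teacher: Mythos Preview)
The paper does not supply its own proof of this lemma; it is quoted directly from Sanders \cite{S} and used as a black box. So there is nothing to compare your attempt against in the present paper.

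As a sketch of Sanders' original argument your outline is broadly on target: the three pillars are indeed Pl\"unnecke--Ruzsa sumset control, Croot--Sisask almost-periodicity, and a Chang-type spectral bound, followed by the standard Bohr-set-to-progression conversion. One correction worth flagging: in Sanders' scheme the iteration happens at the Croot--Sisask stage (iterated almost-periods), and Chang's lemma is applied once at the end to convert the resulting dense set of almost-periods into a Bohr set; it is not a repeated ``Croot--Sisask/Chang loop'' as you describe. The $\log^6$ exponent comes from the parameter choices inside the almost-periodicity lemma combined with the dimension bound in Chang's lemma, and the extra $\log\log$ factor in $h(K)$ does arise, as you say, from accumulating density loss across roughly $\log\log K$ iterations. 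Your description of a ``descent'' trick to land inside $(A-A)+(S-S)$ rather than a higher iterate is not quite how Sanders proceeds: the almost-periods of $1_A * 1_S$ already lie in $(A-A)+(S-S)$ by construction (if $t$ is an almost-period then $1_A*1_S$ and its shift by $t$ have overlapping support, forcing $t \in (A+S)-(A+S) \subseteq (A-A)+(S-S)$ via the standard support argument), so no separate descent is needed. These are refinements rather than fatal gaps; the overall architecture you describe is the right one.
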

\begin{lemma}[Bogolyubov-Ruzsa lemma, good model case]\thlabel{BogolyubovRuzsaGoodModel}
Let $A$ be a subset of a finite additive group $G$ such that $|A| \ge \delta |G|$. Then there exists a symmetric proper progression $P$ of dimension $d$ at most $O(\log(1/\delta))^{6 + o(1)}$ and size at least $\exp(-O(\log(1/\delta))^{6 + o(1)})|2A|$ such that $P \subseteq 2A - 2A$.
\end{lemma}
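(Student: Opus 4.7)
The plan is to derive the lemma as a direct application of the Bogolyubov--Ruzsa lemma (\thref{BogolyubovRuzsa}) in the regime where the doubling comes from the density hypothesis rather than from any combinatorial structure. The central observation is that a set of density $\delta$ in a finite abelian group automatically has small doubling constant $K = 1/\delta$ relative to itself, so we may feed the hypothesis $|A|\ge \delta|G|$ into \thref{BogolyubovRuzsa} after setting $S=A$.

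First, take $S = A$ in \thref{BogolyubovRuzsa}. Then
\[
|A + S| = |A + A| \le |G| \le \delta^{-1} |A| = \delta^{-1}\min(|A|,|S|),
\]
so the hypothesis of \thref{BogolyubovRuzsa} is satisfied with $K = 1/\delta$. Applying the lemma produces a proper symmetric progression $P$ contained in $(A-A) + (S-S) = 2A - 2A$ with dimension at most $d(1/\delta) = O(\log^6(2/\delta))$ and size at least
\[
\exp\bigl(-h(1/\delta)\bigr)|A+A| \;=\; \exp\bigl(-O(\log^6(2/\delta)\log(2\log(1/\delta)))\bigr)|2A|.
\]

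Second, I would absorb the harmless $\log\log$ factor into an $o(1)$ in the exponent. The quantity $\log^6(2/\delta)\log\log(4/\delta)$ is bounded by $(\log(1/\delta))^{6+o(1)}$ as $\delta \to 0$, since for any $\varepsilon > 0$ one has $\log\log(1/\delta) \le (\log(1/\delta))^{\varepsilon}$ for all sufficiently small $\delta$. The same bookkeeping handles the dimension: $O(\log^6(2/\delta))$ is $O(\log(1/\delta))^{6+o(1)}$ trivially. This yields exactly the claimed bounds.

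There is essentially no obstacle here beyond verifying the hypothesis of \thref{BogolyubovRuzsa}; the entire content of \thref{BogolyubovRuzsaGoodModel} is already packed into the nontrivial Sanders theorem stated just above it. The only bookkeeping worth noting is the choice $S = A$ (as opposed to any alternative such as a Bohr-type auxiliary set), which is what converts a \emph{density} hypothesis into a \emph{doubling} hypothesis with constant $1/\delta$, and the observation that the resulting progression naturally lives in $2A - 2A$ rather than $A-A$, which is precisely what the conclusion asks for.
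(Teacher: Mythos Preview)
Your proof is correct and follows exactly the paper's approach: set $S=A$ in \thref{BogolyubovRuzsa}, use $|A+A|\le |G|\le \delta^{-1}|A|$ to get $K=1/\delta$, and read off the bounds. The paper's proof is the one-line version of what you wrote.
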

\begin{proof}
Note that $|A + A| \le |G| \le \frac{1}{\delta}|A|$. Then we apply \thref{BogolyubovRuzsa}.
\end{proof}
The above lemma will be used for the proof of the linearization lemma. The below lemma, a local version of the above lemma, will be used in the proof of the symmetry lemma.
\begin{lemma}[Improved Local Bogolyubov lemma]\thlabel{localbogolyubov}
Let $A$ be a subset of a Bohr set $B$ of dimension $d$ such that $|A| \ge \delta |B|$. Then there exists a symmetric proper progression $P'$ such that $P' \subseteq 2A - 2A$, $P'$ has dimension $O(\log^{6 + o(1)}(4^d/\delta))$ and size at least 
$$\exp(-O(\log(4^d/\delta))^{6 + o(1)})|B|.$$
\end{lemma}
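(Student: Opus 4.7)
The plan is to deduce this local Bogolyubov statement directly from the global Bogolyubov--Ruzsa lemma (\thref{BogolyubovRuzsa}) after bounding the doubling constant of $A$ using the standard doubling property of Bohr sets recorded in Section~2.

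First, I would use the ambient Bohr set to control the doubling of $A$. Since $A \subseteq B = B(S,\rho)$, the sumset satisfies $A+A \subseteq B+B \subseteq B(S,2\rho)$ by the triangle inequality for $\|\alpha \cdot \cdot\|_{\mathbb{R}/\mathbb{Z}}$. The Bohr set doubling bound $|B(S,2\rho)| \le 4^d |B|$ from Section~2 then gives
\[
|A+A| \;\le\; 4^d |B| \;\le\; \frac{4^d}{\delta}\,|A|.
\]
Setting $K := 4^d/\delta$, the set $A$ has doubling at most $K$.

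Next, I would apply \thref{BogolyubovRuzsa} with $S = A$ (the same set on both sides) and this value of $K$. The conclusion gives a proper symmetric progression $P' \subseteq (A-A) + (A-A) = 2A-2A$ of dimension at most
\[
d(K) \;=\; O(\log^6(2K)) \;=\; O\!\left(\log^{6+o(1)}(4^d/\delta)\right)
\]
and of size at least $\exp(-h(K))\,|A+A|$, where $h(K) = O(\log^6(2K)\log\log(2K))$.

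Finally, I would translate the lower bound on $|P'|$ from $|A+A|$ to $|B|$. Since trivially $|A+A| \ge |A| \ge \delta |B|$, we have
\[
|P'| \;\ge\; \exp(-h(K))\,\delta\,|B| \;=\; \exp\!\bigl(-h(K) - \log(1/\delta)\bigr)\,|B|.
\]
Because $\log K = \log(4^d/\delta) = \Theta(\log(4^d/\delta))$, both $h(K)$ and $\log(1/\delta)$ are $O(\log^{6+o(1)}(4^d/\delta))$, so the exponent is $O(\log^{6+o(1)}(4^d/\delta))$, matching the claim. There is no essential obstacle here: the only content is recognizing that the Bohr set doubling furnishes a sufficiently good doubling constant for $A$, so that the (highly nontrivial) global Bogolyubov--Ruzsa theorem of Sanders can be applied directly without any localization machinery.
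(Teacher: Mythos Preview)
Your proposal is correct and follows essentially the same route as the paper: bound $|A+A| \le |2B| \le 4^d|B| \le (4^d/\delta)|A|$ and then invoke \thref{BogolyubovRuzsa} with $S=A$ and $K=4^d/\delta$. The paper's proof is identical but more terse, omitting the final bookkeeping step where you absorb the factor $\delta$ into the exponential.
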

\begin{proof}
Note that $|A + A| \le |2B| \le 4^{d} |B| \le \frac{4^d}{\delta}|A|$. Then apply \thref{BogolyubovRuzsa}.
\end{proof}
Though we can very well work with generalized arithmetic progressions (using what Gowers and Wolf \cite{GW} refer to as \emph{Bourgain systems}), we shall stay faithful to Green and Tao's presentation using Bohr sets. We will need the following lemma which appears in \cite[Proposition 27]{Mi}:
\begin{lemma}[symmetric coset progressions contain Bohr sets, \cite{Mi}]\thlabel{symmetricBohr}
Let $P$ be a symmetric coset progression of dimension $d$ and density $\alpha$. Then $P$ contains a Bohr set $B(S, \rho)$ with
$$|S| \le (d\log(\alpha^{-1}))^{C}$$
$$\rho \ge (d\log(\alpha^{-1}))^{-C'}$$
where $C$ and $C'$ are absolute constants.
\end{lemma}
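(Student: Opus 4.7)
The plan is to first reduce to the case of a proper generalized arithmetic progression and then construct the desired Bohr set using Pontryagin duality and a geometry-of-numbers argument.

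First I would write $P = H + P_0$ where $P_0$ is a proper symmetric generalized arithmetic progression of dimension $d$. The density of $P_0$ in the quotient $G/H$ equals the density $\alpha$ of $P$ in $G$. Any Bohr set in $G/H$ contained in $P_0$ lifts to a Bohr set in $G$ of the same dimension and radius by pulling back its defining characters through $G \twoheadrightarrow G/H$ (these lifted characters are precisely the characters of $G$ trivial on $H$); the lifted Bohr set is contained in $H + P_0 = P$. Thus I may reduce to the case where $H$ is trivial and $P$ is proper.

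In the proper case, write $P = \{\sum_{i=1}^d n_i v_i : |n_i| \le N_i\} \subseteq G$ and consider the surjection $\pi \colon \mathbb{Z}^d \to G_0 := \langle v_1,\ldots,v_d\rangle$ with kernel $\Lambda$. Properness is equivalent to $\Lambda \cap 2B = \{0\}$ where $B = \prod_i[-N_i,N_i]$. Characters of $G_0$ are identified with cosets in $\Lambda^*/\mathbb{Z}^d$, and each extends to a character of $G$ by Pontryagin duality. Applying a quantitative version of Minkowski's second theorem (of the type used in \thref{Bohr4} via \cite[Corollary~4.9]{GT5}) to the dual lattice $\Lambda^*$ with respect to the box dual to $B$, one obtains vectors $\xi^{(1)},\ldots,\xi^{(d)} \in \Lambda^*$ such that
$$B' = \{\vec n \in \mathbb{Z}^d : \|\xi^{(j)}\cdot \vec n\| < \rho_0\ \text{for all}\ j\}$$
is contained in $B+\Lambda$ for some $\rho_0 \gtrsim d^{-O(1)}$. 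Since $\xi^{(j)}\in\Lambda^*$, pushing through $\pi$ yields a Bohr set in $G_0$ contained in $\pi(B)=P$. To promote this to a Bohr set in the full group $G$, I would adjoin a generating set of $G_0^\perp\subseteq \hat G$ as a finite abelian group; since $[G:G_0]\le 1/\alpha$, a generating set of size $O(\log\alpha^{-1})$ exists, and with a suitable small radius tied to the orders of these generators, one forces elements of the resulting Bohr set to lie in $G_0$. The total dimension is $O(d + \log\alpha^{-1})\le (d\log\alpha^{-1})^C$, and the radius satisfies $\rho \ge (d\log\alpha^{-1})^{-C'}$.

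The main technical obstacle will be the Minkowski step: one must ensure the basis of $\Lambda^*$ is short enough relative to the box dual to $B$ to guarantee containment $B'\subseteq B+\Lambda$ with a polynomially large radius $\rho_0$, while using only $O(d)$ characters. A secondary difficulty is the subgroup-detection step; naively one would need $\rho$ smaller than $1/\exp(G_0^\perp)$, which can be exponentially small, but by a careful choice of generators for $G_0^\perp$ (respecting its primary decomposition) and tracking the order structure, one maintains the required polynomial lower bound on $\rho$. Finally, one should handle highly anisotropic boxes $B$ (where the $N_i$'s vary widely in magnitude) by rescaling or iterating the construction, so that the final radius satisfies the stated bound uniformly.
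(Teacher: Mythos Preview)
Your geometry-of-numbers plan has a genuine gap at the Minkowski step. You claim that $d$ dual-lattice vectors $\xi^{(1)},\ldots,\xi^{(d)}$ produce a Bohr set $B'\subseteq B+\Lambda$ with radius $\rho_0\gtrsim d^{-O(1)}$. But any Bohr set in $G_0=\mathbb{Z}^d/\Lambda$ with $d$ frequencies and radius $\rho_0$ has size at least $\rho_0^{\,d}\,|G_0|$, so containment in $P$ forces $\rho_0^{\,d}\le |P|/|G_0|$. When the $v_i$ generate all of $G$ (take $d=1$, $P=\{-1,0,1\}\subseteq\mathbb{Z}/p\mathbb{Z}$, $v_1=1$) this gives $\rho_0\le\alpha^{1/d}$, which is exponentially small in $\log(1/\alpha)$ rather than polynomial in $d$. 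Reaching the target radius $(d\log\alpha^{-1})^{-C'}$ requires on the order of $\log(1/\alpha)/\log\log(1/\alpha)$ frequencies, and a Minkowski basis of $\Lambda^*$ supplies only $d$ of them. The subgroup-detection step has the parallel defect you yourself flagged: pinning down $G_0$ via characters of $G_0^\perp$ needs radius below the inverse exponent of $G_0^\perp$, which can be as small as $\alpha$, and no choice of generators for $G_0^\perp$ repairs this when $G_0^\perp$ is cyclic of order $\sim 1/\alpha$.

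The paper does not prove this lemma; it cites \cite[Proposition 27]{Mi}, and the accompanying remark names the mechanism: for a coset progression, iterated convolution (repeated sumsets) drives the Fourier coefficients of the normalized indicator down \emph{exponentially} in the number of convolutions, and this yields a large-spectrum set of the required polylogarithmic size from which the Bohr set is read off. That Fourier-analytic route is what manufactures enough frequencies to beat the counting obstruction above, and it is essentially different from your plan.
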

\begin{remark}
Note that classical additive energy methods (e.g., \cite[Theorem 4.42]{TV}) do not seem to give good enough bounds for us to use. However, taking repeated sumsets as in \cite[Proposition 27]{Mi} makes the Fourier coefficients decrease exponentially with respect to the number of sumsets being taken, and from that one gets genuine improvement over \cite[Theorem 4.42]{TV}.
\end{remark}

It turns out that inserting these lemmas into the proof of the $U^3$ inverse theorem gives an improved version of a linearization lemma, which is primarily where the quantitative improvement comes from:
\begin{lemma}[Linearization lemma]\thlabel{linearization}
Let $G$ be a finite abelian group, $0 < \delta \le \frac{1}{10000}$, $f\colon G \to \mathbb{C}$ a function with $\|f\|_{L^{1024}[N]} \le L$ with $\|f\|_{U^3} \ge \delta$. Then there exists a regular Bohr set $B(S, \rho)$ with $|S| \ll \log^{C_1}(L\delta^{-1})$ and $\rho \ge O(\log^{-C_2}(L/\delta))$ and a locally linear function $M\colon B(S, 2000\rho) \to \hat{G}$ such that
$$|\mathbb{E}_{x \in G} \mathbb{E}_{h \in B(S, \rho)} b_2(h)b_2(x) f(x + h) e(-2Mh \cdot x)| \ge (\delta/L)^{O(1)}$$
for constants $C_1, C_2 > 0$ where $b_2$ consists of functions of the form $f(x + k)f(x)e(\phi(x, k))$, correlations of at most two terms $f$ and possibly a phase function $\phi$.
\end{lemma}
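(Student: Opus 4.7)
The plan is to follow the Gowers / Green--Tao approach to the $U^3$ inverse theorem, substituting Sanders's quantitative Bogolyubov--Ruzsa lemma (\thref{BogolyubovRuzsa}) for the classical Freiman theorem in order to obtain the polylogarithmic dimension bound $|S| \le \log^{O(1)}(L/\delta)$. Starting from $\|f\|_{U^3(G)} \ge \delta$, I would expand
\[ \|f\|_{U^3(G)}^8 \;=\; \mathbb{E}_h \|\Delta_h f\|_{U^2(G)}^4 \;=\; \mathbb{E}_h \sum_{\xi \in \hat{G}} |\widehat{\Delta_h f}(\xi)|^4, \]
where $\Delta_h f(x) = f(x+h)\overline{f(x)}$. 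The $L^{1024}$ bound on $f$ controls $\|\widehat{\Delta_h f}\|_{\ell^\infty}$ via H\"older, so a standard pigeonhole argument produces a set $H \subseteq G$ with $|H| \gg (\delta/L)^{O(1)}|G|$ and a map $\xi \colon H \to \hat{G}$ satisfying $|\widehat{\Delta_h f}(\xi(h))| \gg (\delta/L)^{O(1)}$ for all $h \in H$.

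The next step is to show that $\xi$ behaves like a Freiman homomorphism. Applying Cauchy--Schwarz several times to $\mathbb{E}_h |\widehat{\Delta_h f}(\xi(h))|^2$ (the ``dependent random variables'' trick of Gowers) yields that the graph $\Gamma = \{(h, \xi(h)) : h \in H\} \subseteq G \oplus \hat{G}$ has additive energy at least $(\delta/L)^{O(1)} |\Gamma|^3$. The Balog--Szemer\'edi--Gowers theorem then extracts a subset $\Gamma' \subseteq \Gamma$ of relative density $(\delta/L)^{O(1)}$ with doubling $|\Gamma' + \Gamma'| \le (L/\delta)^{O(1)} |\Gamma'|$. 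Feeding $\Gamma'$ into Sanders's Bogolyubov--Ruzsa lemma (\thref{BogolyubovRuzsa}) produces a proper symmetric coset progression of dimension $O(\log(L/\delta))^{6 + o(1)}$ inside $2\Gamma' - 2\Gamma'$, and \thref{symmetricBohr} converts this progression into a regular Bohr set $B(S, \rho)$ with $|S| \ll \log^{C_1}(L/\delta)$ and $\rho \gg \log^{-C_2}(L/\delta)$. Projecting the coset progression onto the two factors of $G \oplus \hat{G}$, and exploiting its graph-like structure, yields a locally linear $M \colon B(S, \rho) \to \hat{G}$ such that $\xi(h) \equiv M(h)$ on a positive-density subset of $B(S, \rho)$; the enlargement of the domain to $B(S, 2000\rho)$ is obtained by running the argument on a slightly larger auxiliary Bohr set and subsequently shrinking the radius, as in \cite{GT6}.

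The main obstacle will be converting the pointwise approximation $\xi(h) \approx M(h)$ into the advertised oscillatory correlation with $e(-2M(h)\cdot x)$. To handle this, I would unwind
\[ \widehat{\Delta_h f}(\xi(h)) \;=\; \mathbb{E}_x f(x+h)\overline{f(x)} e(-\xi(h)\cdot x), \]
and run a symmetrization step (averaging this identity against its reflection $x \mapsto -x$, or equivalently squaring and polarizing) in order to replace the frequency $\xi(h)\cdot x$ by the symmetrized phase $2M(h)\cdot x$; the extraneous factors of $f$ that appear are precisely of the form $f(x+k)\overline{f(x)} e(\phi(x,k))$ and so are admissible as $b_2$-terms in the conclusion. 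Tracking parameters through Balog--Szemer\'edi--Gowers, \thref{BogolyubovRuzsa}, and \thref{symmetricBohr}, and combining with the lower bound on $|H|$ and the density of the subset of $B(S, \rho)$ on which the linear approximation holds, produces the desired $(\delta/L)^{O(1)}$ correlation with the stated bounds on $|S|$ and $\rho$.
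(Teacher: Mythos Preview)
Your overall strategy matches the paper's: obtain large derivative Fourier coefficients $\xi(h)$, pass to the graph $\Gamma=\{(h,\xi(h))\}$, use Balog--Szemer\'edi--Gowers, then Sanders's Bogolyubov--Ruzsa and \thref{symmetricBohr} to locate a Bohr set with polylogarithmic rank, and finally substitute back. There is, however, one genuine gap and one point where you diverge unnecessarily from the cleaner route.

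The gap is in the phrase ``exploiting its graph-like structure.'' After BSG you have a $\Gamma'\subseteq G\oplus\hat G$ of small doubling, but $2\Gamma'-2\Gamma'$ need \emph{not} be a graph: nothing rules out $h_1+h_2-h_3-h_4=h_1'+h_2'-h_3'-h_4'$ with $\xi(h_1)+\xi(h_2)-\xi(h_3)-\xi(h_4)\neq\xi(h_1')+\xi(h_2')-\xi(h_3')-\xi(h_4')$. Without this, no well-defined (let alone locally linear) $M$ comes out of the projection. The paper inserts the refinement step \cite[Lemma~9.2]{GT7}, which passes to a further dense subset $\Gamma'$ for which $4\Gamma'-4\Gamma'$ is a graph; this is the missing ingredient in your sketch. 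Relatedly, the paper applies Bogolyubov--Ruzsa and \thref{symmetricBohr} to the projection $H'\subseteq G$ rather than to $\Gamma'\subseteq G\oplus\hat G$; once $4\Gamma'-4\Gamma'$ is a graph, the Bohr set sits inside $2H'-2H'$ and the graph property hands you $M$ on it for free, avoiding any delicate projection argument.

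The ``symmetrization step'' you describe is not needed here and reflects a misreading of the $2$ in $2M$. In the paper the linear map produced on the Bohr set is simply \emph{named} $2M$ for later convenience; one reaches the conclusion by noting that for $h$ in a dense subset $A\subseteq B(S,\rho)$ one has $x_0+h\in H'$ and hence $\xi(x_0+h)=\xi(x_0)+2M(h)$ via the graph property, and then directly substituting into $\widehat{\Delta_{x_0+h}f}(\xi(x_0+h))$. The genuine symmetry argument (showing $Mh\cdot x\approx Mx\cdot h$) is a separate subsequent step, not part of the linearization lemma.
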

Note that $b_2$ is reminiscent of the one-bounded function convention common in related literature (e.g., \cite{GT7}). In this appendix, $b_\ell$ will denote a one-bounded function times $\ell$ shifts of $f$ or $\overline{f}$. For example, $b_3$ can denote $f(x + k_1)f(x + k_2)\overline{f(x + k_3)} e(\phi)$ where $\phi$ is some phase. The next step after the linearization lemma is the symmetry argument, which roughly states that on a smaller Bohr set $B_3$, $2Mh \cdot x \approx 2Mx \cdot h \pmod{1}$.
Following the argument of Green-Tao, this yields a quasi-polynomial $U^3$ inverse theorem:
\begin{theorem}
Let $f\colon \mathbb{Z}/N\mathbb{Z} \to \mathbb{C}$ be a function with $\|f\|_{L^{1024}[N]} \le 1$ and $0 < \delta < \frac{1}{100}$. Suppose
$$\|f\|_{U^3} \ge \delta.$$
Then there exists some absolute constant $C$, a degree two nilsequence $F(g(n)\Gamma)$ of complexity $\exp(O(\log^{C}(1/\delta)))$ on a nilmanifold of dimension $d \le O(\log^C(1/\delta))$ such that
$$|\langle f, F(g(\cdot)) \rangle| \ge \exp(-O(\log^C(1/\delta))).$$
\end{theorem}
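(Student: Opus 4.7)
The plan is to combine the Linearization Lemma (\thref{linearization}) with a symmetry argument à la Green--Tao \cite{GT7} (and Sanders \cite{S}), and then globalize the resulting correlation with a locally quadratic phase on a Bohr set to produce a genuine two-step nilsequence. The polylogarithmic dimension and quasi-polynomial complexity are tracked through each step so that the final nilsequence meets the bounds claimed.

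First, I would invoke \thref{linearization} to produce a regular Bohr set $B(S,\rho)$ with $|S|\ll \log^{C_1}(1/\delta)$ and $\rho\gg \log^{-C_2}(1/\delta)$, together with a locally linear $M\colon B(S,2000\rho)\to \widehat{\mathbb{Z}/N\mathbb{Z}}$ such that
$$|\mathbb{E}_{x}\,\mathbb{E}_{h\in B(S,\rho)}\, b_2(x,h)\, f(x+h)\, e(-2Mh\cdot x)|\ \ge\ \delta^{O(1)}.$$
Next comes the symmetry argument: applying Cauchy--Schwarz twice to eliminate the $b_2$ factors and then expanding the derivative in $h$ and $h'$ should yield an estimate saying that $Mh\cdot x-Mx\cdot h$ lies close to $0\bmod 1$ for many pairs $(x,h)$ in a suitable Bohr set. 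Here is the point where Sanders' improvement enters: instead of using Freĭman--Ruzsa, I apply the improved local Bogolyubov lemma (\thref{localbogolyubov}) to the level set of $(x,h)$ where this near-symmetry holds, producing a symmetric coset progression of polylogarithmic dimension $O(\log^{6+o(1)}(1/\delta))$; \thref{symmetricBohr} then extracts a Bohr set $B(S',\rho')$ inside it with $|S'|\ll \log^{O(1)}(1/\delta)$ and $\rho'\gg \exp(-O(\log^{O(1)}(1/\delta)))$. Integrating $Mh\cdot x$ in $h$ on this smaller Bohr set produces a locally quadratic $\phi\colon B(S',1000\rho')\to \mathbb{R}/\mathbb{Z}$ with $\partial_h\phi(x)\equiv 2Mx\cdot h$, giving exactly the conclusion of \thref{sandersinversetheorem}, namely
$$\mathbb{E}_{x}|\mathbb{E}_{h\in B(S',\rho')} f(x+h)e(-\phi(h)-\xi(x)\cdot h)|\ \gg\ \delta^{O(1)}.$$

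Finally, to pass from this local correlation on a Bohr set to a global two-step nilsequence on $\mathbb{Z}/N\mathbb{Z}$, I would use the construction of Jamneshan--Tao \cite{JT} (as referenced in Section~1.2), which realizes a locally quadratic form on a Bohr set as a nilsequence supported on a coset of a two-step nilmanifold. The dimension of this nilmanifold is polynomial in $|S'|$ (hence polylogarithmic in $1/\delta$), and the Lipschitz/complexity parameters are controlled by $\rho'$ and the denominators occurring in the construction, all of which are quasi-polynomial. Averaging in the translation parameter $x$ and absorbing the linear character $\xi$ into the horizontal part of $g$ converts the average correlation into a pointwise correlation $|\langle f, F(g(\cdot)\Gamma)\rangle|\gg \delta^{O(1)}\gg \exp(-O(\log^C(1/\delta)))$.

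The hard part will be the symmetry step, in two senses. First, one must arrange the Cauchy--Schwarz so that the loss in $\delta$ remains a fixed power (not a $|S|$-dependent power), which is what lets the whole argument be iterated without blowing up the dimension bound. Second, the extraction of a Bohr set of polylogarithmic dimension from the approximate-symmetry set requires applying the Bogolyubov--Ruzsa lemma in its Sanders form \emph{inside} a Bohr set (hence \thref{localbogolyubov}), rather than classical Plünnecke--Ruzsa plus Chang-style Fourier analysis; this is precisely where one beats the $\delta^{-O(1)}$ dimension bound of the original Green--Tao $U^3$ inverse theorem and achieves $\log^{O(1)}(1/\delta)$.
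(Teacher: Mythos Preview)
Your proposal is correct and follows essentially the same route as the paper: apply the Linearization Lemma (already incorporating Sanders' Bogolyubov--Ruzsa bounds), run the Green--Tao symmetry argument with \thref{localbogolyubov} to pass to a smaller Bohr set of polylogarithmic rank on which $Mh\cdot x - Mx\cdot h$ is approximately trivial, set $\phi(x)=Mx\cdot x$ there, and then globalize to a genuine two-step nilsequence. The only cosmetic difference is that the paper cites Green--Tao \cite{GT7} for the final globalization step whereas you cite the Jamneshan--Tao construction \cite{JT}; these produce the same nilmanifold with the same quantitative dependence, so this is not a substantive divergence.
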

\begin{remark}
One should in theory be able to prove a variant of the above $U^3$ inverse theorem with $L^p$ bound hypothesis for any $p$ strictly larger than $2$ using a weak regularity approach, but we shall not do so here. See e.g., \cite[Section 6]{KMT} the discussion on Hahn-Banach theorem.
\end{remark}
Now we shall perform a quantitative analysis of these lemmas followed by an actual proof for completeness sake. 
\subsection{A Quantitative Analysis}
In this subsection, we perform a quantitative analysis of the argument below, mostly keeping track of the parameters related to Bohr sets in the argument. As in the Green-Tao argument, we shall need to keep track of Bohr sets $B_1, B_2, B_3, B_4, B_5$ in our argument below for the $U^3$ inverse theorem. Noting that the quantitative loss occurs in the application of the inefficient Bogolyubov type lemmas we use the more efficient \thref{BogolyubovRuzsaGoodModel} in its place. Write $B_i = B(S_i, \rho_i)$. We see that
$$|S_1| \le  O(\log^{O(1)}(L/\delta)), \hspace{0.1in} \rho_1 \ge O(\log^{-O(1)}(L/\delta))$$
$$|S_2| = |S_1|, \hspace{0.1in} \rho_2 \ge (\delta/L)^{O(1)}.$$
Computing $|S_3|$ and $\rho_3$ requires the use of \thref{localbogolyubov} giving that $2A - 2A$ contains a generalized arithmetic progression of dimension $O(|S_2| + \log(L/\delta))^{6 + o(1)}$ and size at least $\exp(-O(|S_2| + \log(L/\delta))^{6 + o(1)})|B_2|.$ Applying \thref{symmetricBohr} and passing from $B_3$ to $\{2x: x \in B_3\}$, we obtain that 
$$|S_3| = O(|S_2| (|S_2| + \log(L/\delta) + \log(|B_2|)))^{O(1)}, \hspace{0.1in} \rho_3^{-1} \le O(|S_2|(|S_2| + \log(L/\delta) + \log(|B_2|)))^{O(1)}.$$
Plugging in $|S_2| \le O(\log^{O(1)}(L/\delta))$, $\rho_2 \ge (\delta/L)^{O(1)}$, we obtain $|B_2| \ge (\delta/L)^{O(\log(L/\delta))^{O(1)}} |G|$
$$|S_3| \le O(\log(L/\delta))^{O(1)}, \hspace{0.1in} \rho_3 \ge O(\log(L/\delta))^{-O(1)}.$$
Finally, an inspection of the rest of the argument gives
$$|S_4| = |S_3|, \hspace{0.1in} \rho_4 = (\delta/L)^{O(1)}$$
$$|S_5| = |S_4|, \hspace{0.1in} \rho_5 \ge (\delta/L)^{O(1)} \rho_4$$
where $\rho_5$ is chosen so that $B_5$ is regular.
\subsection{Proof of the Linearization lemma}
We will need the following from \cite[Proposition 5.1]{GT7}
\begin{lemma}
Let $f\colon [N] \to \mathbb{C}$ and $\delta < 1000^{-1}$ be a function with $\|f\|_{L^{1024}[N]} \le L$ and suppose $\|f\|_{U^3} \ge \delta$. Then there exists a set $H \subseteq [N]$ and a phase function $\xi\colon H \to \hat{G}$ such that
$$|\{h_1 + h_2 = h_3 + h_4\}| \ge N^3 (\delta/L)^{O(1)}$$
and
$$|\mathbb{E}_{x} f(x + h)\overline{f(x)} e(-\xi(h) \cdot x)| \ge (\delta/L)^{100}.$$
\end{lemma}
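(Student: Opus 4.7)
The plan is to unpack the $U^3$ norm as an iterated average of $U^2$ norms, apply Parseval pointwise to extract the phase $\xi$, and then upgrade via a Gowers-style Cauchy--Schwarz argument to the additive-energy statement. Concretely, starting from
$$\delta^8 \le \|f\|_{U^3}^8 = \mathbb{E}_{h \in \mathbb{Z}/N\mathbb{Z}} \|\Delta_h f\|_{U^2}^4,$$
where $\Delta_h f(x) := f(x+h)\overline{f(x)}$, I would invoke $\|g\|_{U^2}^4 = \sum_\xi |\hat g(\xi)|^4$ together with Parseval to obtain the pointwise bound
$$\|\Delta_h f\|_{U^2}^4 \le \Bigl(\sup_\xi |\widehat{\Delta_h f}(\xi)|\Bigr)^2 \|\Delta_h f\|_{L^2}^2.$$
The $L^2$ term is at most $\|f\|_{L^4}^4 \le L^4$ by the hypothesis $\|f\|_{L^{1024}} \le L$ and the monotonicity of $L^p$ norms on a probability space, so averaging in $h$ yields $\mathbb{E}_h \sup_\xi |\widehat{\Delta_h f}(\xi)|^2 \gg (\delta/L)^{O(1)}$. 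Defining $\xi(h)$ to be the maximizer and using the trivial pointwise bound $|\widehat{\Delta_h f}(\xi)| \le L^{O(1)}$ to pigeonhole then yields a set $H \subseteq [N]$ of size $|H| \gg N(\delta/L)^{O(1)}$ with $|\widehat{\Delta_h f}(\xi(h))| \gg (\delta/L)^{O(1)}$ for every $h \in H$; the exponent $100$ in the claimed correlation bound is attained by tightening the pigeonhole threshold at only polynomial cost in $|H|$.

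For the additive-energy conclusion I would proceed via a dependent Cauchy--Schwarz: squaring the Fourier correlation $|\widehat{\Delta_h f}(\xi(h))|^2 \gg (\delta/L)^{O(1)}$ for $h\in H$, summing over $h$, applying Cauchy--Schwarz once more to eliminate the $h$-dependence, and expanding out the resulting $8$-fold average of $f$ against its shifts. After a change of variables in which the spatial shifts align, the phases collapse to $e\bigl((\xi(h_1)+\xi(h_2)-\xi(h_3)-\xi(h_4))\cdot z\bigr)$ summed over quadruples $(h_1,\dots,h_4)\in H^4$; orthogonality in $z$ constrains the count to quadruples with $\xi(h_1)+\xi(h_2)=\xi(h_3)+\xi(h_4)$, and bounding the remaining $8$-fold average by $L^{O(1)}$ via H\"older and the $L^{1024}$ hypothesis yields $\gg N^3(\delta/L)^{O(1)}$ such quadruples (which in particular must satisfy $h_1+h_2=h_3+h_4$ to contribute). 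I note that if one reads the statement literally as only asserting the additive energy of $H$ itself, the claim is immediate from $|H|\gg N(\delta/L)^{O(1)}$ via the Cauchy--Schwarz bound $|H|^4/|H+H|\ge|H|^4/N$, but the stronger graph-energy version is what is actually required for the subsequent proof of the linearization lemma.

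The main technical obstacle is bookkeeping the powers of $L$ through the successive Cauchy--Schwarz expansions: each step doubles the number of copies of $f$, so after two or three expansions one is controlling $L^p$ norms of $f$ for $p$ as large as $16$ or $32$, which is precisely why the hypothesis is stated in the generous form $\|f\|_{L^{1024}}\le L$ rather than merely $\|f\|_\infty \le L$. Beyond that the argument is routine Fourier analysis and pigeonhole, which is why \cite{GT7} quote the result without further detail.
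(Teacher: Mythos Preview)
Your proposal is correct and takes essentially the same approach as the paper, which simply defers to \cite[Proposition~5.1]{GT7} with the instruction to track the Cauchy--Schwarz losses; your identification of the $L^{1024}$ hypothesis as the device permitting H\"older bounds on the multi-fold products of shifts of $f$ is exactly the bookkeeping the paper has in mind. Your observation that the literal statement (additive energy of $H$ alone) follows trivially from $|H|\gg N(\delta/L)^{O(1)}$, while the graph-energy version for $\{(h,\xi(h))\}$ is what the subsequent Balog--Szemer\'edi--Gowers step actually requires, is well taken.
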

\begin{proof}
The proof essentially follows the proof of \cite[Proposition 5.1]{GT7} line-by-line, except we keep track of the Cauchy-Schwarz terms we apply.
\end{proof}
Following the argument in \cite[Section 5]{GT7}, we see that there exists some set $H$ and a function $\xi: H \to \hat{G}$ such that $\Gamma = \{(h, \xi(h))\}$ has size at least $(\delta/L)^{O(1)}|G|$ with
$$|kH - \ell H| \le ((\delta/L)^{-O(1)})^{k + \ell} |H|$$
such that for each $(h, \xi(h)) \in \Gamma$, we have
$$|\mathbb{E}_{x \in G} f(x + h) \overline{f(x)} e(-\xi(h) x)| \ge (\delta/L)^{O(1)}.$$
\begin{lemma}
There exists a subset $\Gamma' = \{(h, \xi(h)): h \in H'\}$ such that $|\Gamma'| \ge (\delta/L)^{O(1)}|G|$ and that $4\Gamma' - 4\Gamma'$ is a graph.
\end{lemma}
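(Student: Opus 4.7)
The plan is a standard Plünnecke--Ruzsa plus Balog--Szemerédi--Gowers plus Freiman-homomorphism-refinement argument. First, unpack the desired conclusion: ``$4\Gamma' - 4\Gamma'$ is a graph'' is equivalent to $\xi|_{H'}$ being a Freiman $8$-homomorphism, i.e., for all $(h_i, \xi(h_i)) \in \Gamma'$ satisfying $h_1 + h_2 + h_3 + h_4 = h_5 + h_6 + h_7 + h_8$, one has $\xi(h_1) + \xi(h_2) + \xi(h_3) + \xi(h_4) = \xi(h_5) + \xi(h_6) + \xi(h_7) + \xi(h_8)$. In particular, the only kernel element in $4\Gamma' - 4\Gamma'$ over the $G$-coordinate should be $(0,0)$.

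The first step is to upgrade the given small-doubling hypothesis on $H$ to polynomial doubling on the graph $\Gamma$ itself. The correlation $|\mathbb{E}_x f(x+h) \overline{f(x)} e(-\xi(h)\cdot x)| \ge (\delta/L)^{O(1)}$ says that $\xi(h)$ is a large Fourier coefficient of the multiplicative derivative $\Delta_h f$, and by a standard Cauchy--Schwarz/Parseval argument (analyzing $\sum_x f(x+h_1)\overline{f(x+h_2)}f(x+h_3)\overline{f(x+h_4)}$ for $h_1 - h_2 + h_3 - h_4 = 0$), many quadruples in $H$ with $h_1 + h_2 = h_3 + h_4$ satisfy $\xi(h_1) + \xi(h_2) = \xi(h_3) + \xi(h_4)$. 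Thus $\xi$ is an approximate Freiman homomorphism on $H$, and applying Balog--Szemerédi--Gowers to the graph $\Gamma$ yields a subset $\Gamma_0 \subseteq \Gamma$ of size at least $(\delta/L)^{O(1)} |\Gamma|$ with $|\Gamma_0 + \Gamma_0| \le (L/\delta)^{O(1)} |\Gamma_0|$. Plünnecke--Ruzsa then furnishes $|k \Gamma_0 - \ell \Gamma_0| \le (L/\delta)^{O(k+\ell)} |\Gamma_0|$ for all $k, \ell$.

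The second step is the actual extraction of $\Gamma'$ from $\Gamma_0$. The ``obstructions'' to the graph property are elements $(0, \eta) \ne (0,0)$ lying in $4\Gamma_0 - 4\Gamma_0$; there are at most $(L/\delta)^{O(1)} |\Gamma_0|$ such $\eta$ by the Plünnecke--Ruzsa bound above. A standard Freiman-homomorphism refinement lemma (cf.\ the arguments of Tao--Vu, \emph{Additive Combinatorics}, Section 5.3, or Green--Tao, \cite{GT7}, Section 6) now provides $\Gamma' \subseteq \Gamma_0$ with $|\Gamma'| \ge |\Gamma_0|/(L/\delta)^{O(1)}$ on which the graph property propagates to $4\Gamma' - 4\Gamma'$: one greedily or via dependent random choice removes a $\Gamma_0$-element from each bad $8$-tuple representation, a step that costs only a polynomial factor because bad representations cluster around few values of $\eta$ and each removal kills many obstructions simultaneously. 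The main obstacle is precisely this quantitative control: a naive per-obstruction removal could exceed the allowable fraction, but the polynomial doubling bound $|8\Gamma_0 - 8\Gamma_0| \le (L/\delta)^{O(1)} |\Gamma_0|$ ensures that the total loss stays polynomial in $L/\delta$. Combined with $|\Gamma_0| \ge (\delta/L)^{O(1)} |G|$, this yields $|\Gamma'| \ge (\delta/L)^{O(1)} |G|$, as required.
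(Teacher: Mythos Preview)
The paper's own proof is simply a citation to \cite[Lemma 9.2]{GT7}, and your outline matches that lemma's strategy: show $\Gamma$ has high additive energy via the Parseval argument, apply Balog--Szemer\'edi--Gowers and Pl\"unnecke--Ruzsa to $\Gamma$, then refine to a subset on which $\xi$ is a genuine Freiman $8$-homomorphism. One small caveat: the refinement step in \cite{GT7} does not proceed by greedy or dependent-random-choice removal as you sketch, but rather by a Ruzsa model lemma combined with a pigeonhole over translates; since you cite the correct references for this step, the imprecise description is not a genuine gap.
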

\begin{proof}
See \cite[lemma 9.2]{GT7}.
\end{proof}
Applying \thref{BogolyubovRuzsaGoodModel} and \thref{symmetricBohr} to $H'$, we find a regular Bohr set $B_1 = B(S, \rho)$ with $|S| \le \log^{O(1)}(L/\delta)$ and $\rho \ge \log^{-O(1)}(L/\delta)$ that is completely contained in $2H' - 2H'$. Thus, there exists a a function $2M\colon B(S, \rho) \to \hat{G}$ which is a Freiman homomorphism since $4\Gamma' - 4\Gamma'$ is a graph. Since $4\Gamma' - 4\Gamma'$ contains $0$, it follows that $M(0) = 0$. As $2M(h)$ is not quite contained in $H'$, but is contained in $2H' - 2H'$, we must, however, isolate a subset which is a translation of $H'$ for which $B(S, \rho)$ is dense in. \\\\
By the Plunnecke-Ruzsa inequalities (e.g., \cite[Theorem 5.3]{GT7}), it follows that
$$\mathbb{E}_x \mathbb{E}_{h \in B(S, \rho)} 1_{H'}(S, \rho)(x + h) \ge (\delta/L)^{O(1)}.$$
By the pigeonhole principle, there exists a translation $x_0$ such that
$$\mathbb{E}_{h \in B(S, \rho)} 1_{H'}(x_0 + h) \ge (\delta/L)^{O(1)}.$$
Setting $A = \{h \in B(S, \rho), x_0 + h \in H'\}$, we find that $|A| \ge (\delta/L)^{O(1)}|B(S, \rho)|$ so direct insertion into the quadratic Fourier analytic arguments gives
$$\mathbb{E}_{h \in B(S, \rho)} |\mathbb{E}_{x \in G} f(x)\overline{f(x + h + x_0)} e(-(\xi(x_0) +2Mh)x)| \ge (\delta/L)^{O(1)}$$
where $\xi(x_0)$ is the phase factor in the exponent corresponding to the shift of $x_0$. 
\subsection{The Symmetry Argument}
The key is to showing that $Mx \cdot y$ and $My \cdot x$ are ``close together", so to speak, i.e., $e(Mx \cdot y - My \cdot x)$ has bounded approximate Fourier complexity. \\\\
We begin with the conclusion of the linearization step, opting to write it as
$$|\mathbb{E}_{h \in B(S_1, \rho_1), x \in B(S_2, \rho_2)} b_2(x + h)b_2(x)b_2(h) e(-2Mh \cdot x)| \ge (\delta/L)^{O(1)}$$
where $\rho_2$ is chosen to be much smaller than $\rho_1$. By the Cauchy-Schwarz inequality in the $h$ variable, we have
$$|\mathbb{E}_{h \in B(S_1, \rho_1), x, y \in B(S_2, \rho_2)} b_2(x + h)b_2(x)b_2(y + h)b_2(y)e(-Mh \cdot x)| \ge (\delta/L)^{O(1)}.$$
Making the change of variables $z = x + y + h$, we have
$$|\mathbb{E}_{x, y \in B(S_2, \rho_2), z \in x + y + B(S_1, \rho_1)} b_2(z - x) b_2(z - y) b_2(x)b_2(y) e(-2M(z - x - y) \cdot (y - x))| \ge (\delta/L)^{O(1)}.$$
Here, we observe that
$$M(z - x - y) \cdot (y - x) = Mx \cdot y - My \cdot x + f_1(x, z) + f_2(y, z)$$
where $f_1, f_2$ are functions. The point is that the phases $f_1$ and $f_2$ may be absorbed in $b_2$ and we are left with the phase $Mx \cdot y - My \cdot x$, which if studied measures how close $Mx \cdot y$ and $My \cdot x$ are, where ``closeness" means that they differ by an approximate degree one nilsequence. By the pigeonhole principle in $z$, we have (see discussion after \thref{linearization} for the definition of $b_4$ and more generally $b_j$)
$$|\mathbb{E}_{x, y \in B(S_2, \rho_2)} b_4(x)b_4(y) e(2(Mx \cdot y - My \cdot x))| \ge (\delta/L)^{O(1)}.$$
By Cauchy-Schwarz in $x$, and using the fact that $M$ is locally linear, it follows that
$$|\mathbb{E}_{y, y', x \in B(S_2, \rho_2)} b_8(y, y') e(2(Mx \cdot (y - y') - M(y - y') \cdot x))| \ge (\delta/L)^{O(1)}.$$
By the pigeonhole principle, we thus have
$$|\mathbb{E}_{x \in B(S_2, \rho_2)}  e(2(Mx \cdot (y - y') - M(y - y') \cdot x))| \ge (\delta/L)^{O(1)}$$
for all $y \in A$ with $|A| \ge (\delta/L)^{O(1)} |B(S_2, \rho_2)|$. Let $\{x, z\} = Mx \cdot z - Mz \cdot x$. Then the above inequality may be written as
$$|\mathbb{E}_{x \in B(S_2, \rho_2)}  e(2\{x, y - y'\})| \ge (\delta/L)^{O(1)}.$$
Applying \cite[lemma 8.4]{GT7}, and using the fact that $\{x, z\}$ is locally bilinear, it follows that
$$\|\{x, z\}\|_{\mathbb{R}/\mathbb{Z}} \le (\delta/L)^{-O(1)} \|x\|_{S_2}$$
for all $z \in 2A - 2A$ since $M$ is locally linear on $B(S, \rho)$. By \thref{localbogolyubov}, $2A - 2A$ contains a Bohr set $B(S_2 \cup S', \rho_2^{O(1)})$ (the set $B_3$ in our sketch above). Thus, it follows that for some Bohr set $B_3$, for $x, z$ in that Bohr set, we have
$$\|2(Mx \cdot z - Mz \cdot x)\|_{\mathbb{R}/\mathbb{Z}} \le (\delta/L)^{-O(1)} \|x\|_S.$$
To eliminate the factor of $2$, we simply multiply the phase set by $\frac{1}{2}$, and modify $B_3 = B(S_2 \cup S', \rho_2^{O(1)})$ via $B(\frac{1}{2}(S_2 \cup S'), \rho_2^{O(1)})$.
\subsection{Finishing the $U^3$ inverse theorem}
We can thus define $B(x, z) = Mx \cdot z + Mz \cdot x$. The claim is that $f$ correlates with the bilinear form $B$. By the conclusion of the linearization step, we have
$$|\mathbb{E}_{h \in B(S, \rho'), x \in G} b_2(h)b_2(x + h) \overline{f(x)}e(-Mh \cdot x)| \ge (\delta/L)^{O(1)}.$$
As $2Mh \cdot x = B(h, x) - \{x, h\}$ where $\{x, z\} = Mh \cdot x -Mx \cdot h$, it follows that if we localize $h$ to a small enough Bohr set, then $2Mh \cdot x \approx B(h, x)$. Thus, we use the pigeonhole principle and \thref{localization} to localize $h$ and $x$ to the Bohr set $B_4 = B(S_3, \rho_3^{O(1)})$, defined above:
$$\mathbb{E}_{y \in G} |\mathbb{E}_{h, x \in B_4} b_2(h)b_2(x + h)b_2(h, y) \overline{f(x + y)} e(-2Mh \cdot y)| \ge (\delta/L)^{O(1)}$$
and using the fact that $Mh \cdot x -Mx \cdot h \pmod{1}$ varies by at most  $(\delta/L)^{O(1)}$ on $B_4$, it follows that
$$\mathbb{E}_{y \in G} |\mathbb{E}_{h \in B_4, x \in G} b_4(h, y)b_2(x + h) \overline{f(x + y)} e(-B(h, x))| \ge (\delta/L)^{O(1)}.$$
Next, we use $B(h, x) = M(x + h) \cdot (x + h) - Mx \cdot x - Mh \cdot h$ and writing $\phi(x) = Mx \cdot x$ to obtain
$$\mathbb{E}_{y \in G} |\mathbb{E}_{h \in B_4, x \in G} b_4(h, y)b_2(x + h) \overline{f(x + y)} e(-\phi(x))| \ge (\delta/L)^{O(1)}.$$
Now localizing $x$ to $B_5 = B(S_4, \rho_4^{O(1)})$ once again so that $x + h$ lies in a Bohr set, we have
$$\mathbb{E}_{y \in G} |\mathbb{E}_{h \in B_4, x \in B_5} b_4(h, y) b_2(h + x) \overline{f(x + y)} e(-\phi(x))| \ge (\delta/L)^{O(1)}.$$
As the form $(h, x + h, x)$ is a complexity one form, we may use Cauchy-Schwarz and Plancheral to obtain
\begin{align*}
|\mathbb{E}_{h \in B_4, x \in B_5} b(h, y) b(h + x) \overline{f(x + y)} e(-\phi(x))| &\le \frac{1}{(\mathbb{E} 1_{B_4})(\mathbb{E} 1_{B_5})} |\mathbb{E}_{\xi} \hat{b}(-\xi, y) \hat{b}(\xi) \widehat{(f(\cdot + y) e(-\phi(\cdot)))}(\xi)| \\
&\le L^{O(1)}\frac{\mathbb{E} 1_{B_4 + B_5}}{(\mathbb{E} 1_{B_4})(\mathbb{E} 1_{B_5})} \sup_{\xi} |\mathbb{E}_{\xi \in G} f(x + y) e(-\phi(x) + \xi \cdot x)|.
\end{align*}
Hence,
$$\mathbb{E}_{y \in G} |\mathbb{E}_{x \in B_5} \overline{f(x + y)} e(-\phi(x) + \xi(y) \cdot x)| \ge (\delta/L)^{O(1)}.$$
This completes the proof of \thref{sandersinversetheorem}.
\section{Equidistribution of Multidimensional Polynomials}
Here, we shall record some results on the equidistribution theory of multidimensional polynomials, which is worked out in \cite{T1}.
\begin{proposition}\thlabel{multi}
Let $P(n_1, \dots, n_d) = \sum_{j \in [d]} \sum_{i_j \in [d_j]} a_{i_1, i_2, \dots, i_d} n_1^{i_1} n_2^{i_2} \cdots n_d^{i_d}$ be a multidimensional polynomial of dimension $d$. Let $I_i \subseteq [N_i]$ and suppose
$$\left|\sum_{n_i \in I_i \forall i} e(P(n))\right| \ge \delta N_1N_2 \cdots N_d.$$
Then either $N_j \ll_{d, d_1, \dots, d_d} \delta^{-O_{d, d_1, \dots, d_d}}$ for some $j$ or there exists $q \ll \delta^{-O_{d, d_1, \dots, d_d}}$ such that
$$\|q \alpha_{i_1, i_2, \dots, i_d}\|_{\mathbb{R}/\mathbb{Z}} \ll_{d, d_1, \dots, d_d} \delta^{-O_{d_1, \dots, d_d}}N_1^{-i_1}N_2^{-i_2} \cdots N_d^{-i_d}.$$
\end{proposition}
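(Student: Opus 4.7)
My plan is to proceed by induction on the total degree $D := \sum_{j=1}^d d_j$ of $P$, following the standard approach to multidimensional Weyl equidistribution (compare \cite{T1}). The base case $D = 0$ is vacuous, since the only coefficient is the constant term which satisfies the bound trivially with $q = 1$. For $D \ge 1$, pick a variable $n_1$ with $d_1 \ge 1$ and apply Cauchy--Schwarz in $n_1$ to separate it from the remaining variables, followed by van der Corput's inequality in $n_1$ with parameter $H_1 = \lceil \delta N_1 \rceil$. After pigeonholing on the differencing parameter, one obtains a set $\mathcal{H} \subseteq [-H_1, H_1]$ of size $|\mathcal{H}| \gg \delta^{O_D(1)} H_1$ such that for each $h_1 \in \mathcal{H}$,
$$\left|\sum_{n_1 \in I_1 \cap (I_1 - h_1),\ n_j \in I_j} e(\Delta^{(1)}_{h_1} P(n))\right| \gg \delta^{O_D(1)} \prod_{j=1}^d N_j,$$
where $\Delta^{(1)}_{h_1} P(n) := P(n_1 + h_1, n_2, \ldots, n_d) - P(n)$ has degree $d_1 - 1$ in $n_1$ and unchanged degree in the other variables, so total degree $D - 1$.

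For each such $h_1$, apply the inductive hypothesis: either some $N_j \ll \delta^{-O_D(1)}$ (yielding the first alternative directly for the original problem), or each coefficient of $\Delta^{(1)}_{h_1} P$ admits the stated rational approximation with denominator $q(h_1) \ll \delta^{-O_D(1)}$. Now the coefficient of $n_1^{i_1} n_2^{i_2} \cdots n_d^{i_d}$ in $\Delta^{(1)}_{h_1} P$ equals $\sum_{i'_1 > i_1} \binom{i'_1}{i_1} h_1^{i'_1 - i_1} a_{i'_1, i_2, \ldots, i_d}$; looking at $i_1 = d_1 - 1$ this equals exactly $d_1 h_1 a_{d_1, i_2, \ldots, i_d}$. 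Pigeonholing on $q(h_1)$ (only $\delta^{-O_D(1)}$ possible values) yields a fixed $q$ and a set of $h_1$'s of density $\delta^{O_D(1)}$ for which $d_1 h_1 a_{d_1, i_2, \ldots, i_d}$ lies $\delta^{-O_D(1)} N_1^{-(d_1-1)} \prod_{j \ge 2} N_j^{-i_j}$-close to $\frac{1}{q}\mathbb{Z}$. Applying Vinogradov's Lemma (\thref{Vinogradov}) on the linear-in-$h_1$ dependence then produces a uniform $q' \ll \delta^{-O_D(1)}$ with
$$\|q'\, a_{d_1, i_2, \ldots, i_d}\|_{\mathbb{R}/\mathbb{Z}} \ll \delta^{-O_D(1)} N_1^{-d_1} \prod_{j \ge 2} N_j^{-i_j}.$$
A downward recursion on $i_1$ (solving the triangular system in $i_1$ using the already-controlled higher-$i_1$ coefficients, combined with Vinogradov once more for each step) yields analogous bounds for every $a_{i_1, i_2, \ldots, i_d}$ with $i_1 \ge 1$. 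An identical argument applied in each of the remaining variables $n_2, \ldots, n_d$ in turn produces the bound for the coefficients with $i_1 = 0$ but some $i_j \ge 1$, and the constant coefficient $a_{0, \ldots, 0}$ is trivial.

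The main obstacle is the quantitative bookkeeping of the $O_D(1)$ exponents. Each van der Corput step costs a squaring and an $H_1$-factor; each inductive call loses a polynomial-in-$\delta$ factor; each Vinogradov extraction and each step of the triangular back-solving loses another polynomial factor; and all of these losses compound across the induction. Nevertheless, because the recursion depth is bounded by $D$ (with at most $\prod_j (d_j + 1)$ total coefficients to control), the final exponent depends only on $(d, d_1, \ldots, d_d)$ as required. The lower bounds $N_j \gg \delta^{-O_D(1)}$ assumed throughout are precisely what ensures the van der Corput and Vinogradov applications are non-degenerate; otherwise one is in the first alternative of the conclusion.
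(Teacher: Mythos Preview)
Your proposal is correct and follows the standard Weyl--van der Corput approach: induct on total degree, difference in one variable to lower the degree, apply the inductive hypothesis, then use Vinogradov's lemma and a triangular back-substitution to recover control on the original coefficients. The paper does not actually supply its own proof of this proposition; it merely records the statement and refers to \cite{T1}, where precisely this argument is carried out, so your plan matches the intended approach.
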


\begin{lemma}[Vinogradov's Lemma]\thlabel{Vinogradov}
Let $I \subseteq [N]$ be an interval and $P\colon \mathbb{Z} \to \mathbb{R}/\mathbb{Z}$ a polynomial of degree $d$ of the form $P(n) = \sum_{i = 0}^d \alpha_i n^i$. Suppose that $\|P(n)\| \le \epsilon$ for $\delta N$ many values of $n \in I$ with $0 < \delta, \epsilon < 1$. Then either
$$N \ll \delta^{-\exp(O(d)^{O(1)})}$$
or
$$\epsilon \ll O(\delta)^{\exp(O(d)^{O(1)})}$$
or there exists some $q \ll O(\delta)^{-\exp(O(d)^{O(1)})}$ such that
$$\|q\alpha_i \|_{\mathbb{R}/\mathbb{Z}} \ll \frac{\delta^{-O(1)}\epsilon}{N^i}.$$
\end{lemma}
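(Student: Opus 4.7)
The plan is to prove the lemma by induction on the degree $d$. The base case $d=1$ is classical: given $P(n) = \alpha_1 n + \alpha_0$ with $\|P(n)\|_{\mathbb{R}/\mathbb{Z}} \le \epsilon$ for $n$ in a set $A \subseteq I$ of size at least $\delta N$, the difference set $A - A$ contains at least $\delta N$ integers $k$ with $|k| \le N$ and $\|\alpha_1 k\|_{\mathbb{R}/\mathbb{Z}} \le 2\epsilon$. Taking $q$ to be the smallest positive such integer and using a standard gap argument (if an element $k$ of this set is not close to a multiple of $q$, then the non-multiple-of-$q$ remainder $k \bmod q$ would give a smaller positive element of the set, contradicting minimality), one deduces $q \ll \delta^{-1}$, after which counting forces $\|q \alpha_1\|_{\mathbb{R}/\mathbb{Z}} \ll \delta^{-O(1)} \epsilon / N$, unless one of the degenerate cases $N \ll \delta^{-O(1)}$ or $\epsilon \ll \delta^{O(1)}$ already holds.

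For the inductive step, assume the lemma for degree $d-1$ and let $P$ have degree $d$ with $\|P(n)\|_{\mathbb{R}/\mathbb{Z}} \le \epsilon$ on a set $A$ of size $\delta N$. For each shift $h$, the discrete derivative $\partial_h P(n) := P(n+h) - P(n)$ is a polynomial of degree $d-1$ in $n$ whose leading coefficient equals $d \alpha_d h$ (plus an additive constant in $n$ that is a polynomial in $h$). A standard Cauchy-Schwarz / counting argument shows that for at least $\delta^2 N$ values of $h \in [-N,N]$, the set $A_h := A \cap (A - h)$ has size at least $\tfrac{1}{2}\delta^2 N$, and on $A_h$ we have $\|\partial_h P(n)\|_{\mathbb{R}/\mathbb{Z}} \le 2\epsilon$. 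Applying the inductive hypothesis to $\partial_h P$ for each good $h$ (absorbing the $N$-small and $\epsilon$-small degenerate cases into the final conclusion) produces a denominator $q_h \ll \delta^{-\exp(O(d-1)^{O(1)})}$ with $\|q_h \cdot d\alpha_d h\|_{\mathbb{R}/\mathbb{Z}} \ll \delta^{-O(1)} \epsilon / N^{d-1}$. Pigeonholing over the $\ll \delta^{-\exp(O(d-1)^{O(1)})}$ possible values of $q_h$ yields a common $q'$ that works for $\delta^{\exp(O(d)^{O(1)})} N$ values of $h$.

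Now applying the base case to the linear polynomial $h \mapsto q' d \alpha_d h$ yields an integer $q''$, of comparable size, with $\|q'' q' d \alpha_d\|_{\mathbb{R}/\mathbb{Z}} \ll \delta^{-\exp(O(d)^{O(1)})} \epsilon / N^d$. Setting $q = q'' q' d$ handles the leading coefficient. Replacing $\alpha_d$ by its rational approximation and subtracting the resulting leading monomial from $P$ produces a polynomial of effective degree $d-1$ whose small-value set on $I$ still has size $\ge \tfrac{1}{2}\delta N$, provided we have allowed for the small additional error $\|q \alpha_d\|_{\mathbb{R}/\mathbb{Z}} \cdot N^d$ in the new $\epsilon$. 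A second application of the inductive hypothesis then clears $\alpha_{d-1}, \dots, \alpha_0$, and taking a common multiple of all the denominators gives the claimed uniform $q$ across all coefficients.

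The main obstacle is the bookkeeping: each level of induction compounds losses through three distinct steps (Cauchy-Schwarz in $h$, pigeonholing over the set of possible $q_h$'s, and the peel-off subtraction), and one must verify that the recursion on $\delta^{-\exp(O(d)^{O(1)})}$-type bounds closes, i.e.\ that the exponent grows only like $\exp(O(d)^{O(1)})$ rather than, say, a tower in $d$. The peel-off step in particular requires that $\|q \alpha_d\|_{\mathbb{R}/\mathbb{Z}} \cdot N^d$ remains smaller than $\epsilon$ by a constant factor so that the induction can be reapplied with comparable $\epsilon$; this is where one exploits the freedom to choose the absolute constants in each application of the inductive hypothesis generously. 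The degenerate cases $N \ll \delta^{-\exp(O(d)^{O(1)})}$ and $\epsilon \ll \delta^{\exp(O(d)^{O(1)})}$ must also be threaded consistently through the recursion, as they can arise at any stage.
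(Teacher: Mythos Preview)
The paper does not actually prove this lemma: Appendix B merely records it, alongside \thref{multi} and \thref{QuadraticVinogradov}, as a known background result, with the surrounding material attributed to \cite{T1} and \cite{GT4}. So there is no in-paper argument to compare your proposal against.

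Your approach---induction on $d$ via Weyl differencing (take discrete derivatives in $n$ to drop the degree, apply the inductive hypothesis for many shifts $h$, pigeonhole on the resulting denominators $q_h$, invoke the linear case in $h$ to pin down the leading coefficient, then peel it off and recurse)---is the standard one and is sound. The bookkeeping caveat you flag is genuine: after pigeonholing over $q_h$ the density of good $h$ has dropped to $\delta^{\exp(O(d)^{O(1)})}$, so the $d=1$ step applied to $h \mapsto q' d \alpha_d h$ yields $\|q\alpha_d\|_{\mathbb{R}/\mathbb{Z}} \ll \delta^{-\exp(O(d)^{O(1)})} \epsilon / N^d$ rather than a bound with a $d$-independent exponent on $\delta$. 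This matches the stated conclusion provided the $O(1)$ there is read as $O_d(1)$, which is how the paper uses the lemma (always with $d$ bounded). The recursion on the size of $q$ does close at $\exp(O(d)^{O(1)})$, since each inductive step at most multiplies the exponent by a bounded factor and adds a bounded term.
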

The following is worked out in \cite[lemma A.12]{GT4}:
\begin{lemma}[Quadratic Vinogradov Lemma]\thlabel{QuadraticVinogradov}
Let $I \subseteq \mathbb{Z}$ be an interval and suppose
$$\{\ell \in I: \|\alpha \ell^2 + \beta \ell + \gamma\|_{\mathbb{R}/\mathbb{Z}} \le \epsilon\}$$
has size at least $\delta |I|$. Then either $\epsilon > \frac{1}{4} \delta$ or $|I| \ge 2^{58}\delta^{-12}$ or 
$$\|\alpha\|_{2^{43} \delta^{-9}, \mathbb{R}/\mathbb{Z}} \le 2^{141}\delta^{-28}|I|^{-2}.$$
\end{lemma}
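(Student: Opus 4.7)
The plan is to apply the standard ``two successive differencing'' strategy to reduce the quadratic problem to a bilinear one in the top coefficient $\alpha$, and then apply the linear Vinogradov lemma (or equivalently, a direct pigeonhole on the bilinear form). Concretely, let $A = \{\ell \in I : \|\alpha \ell^{2} + \beta \ell + \gamma\|_{\mathbb{R}/\mathbb{Z}} \le \epsilon\}$ so that $|A| \ge \delta |I|$. First I would form the difference set: for $h \in \mathbb{Z}$, let $r(h) := \#\{\ell : \ell, \ell + h \in A\}$. By Cauchy--Schwarz applied to $\sum_h r(h) = |A|^{2}$, one obtains $\sum_h r(h)^{2} \ge |A|^{4}/(2|I|)$, so by pigeonholing one gets, after dyadic decomposition, a set $H \subseteq [-|I|,|I|]$ of size $\gg \delta^{2}|I|$ on which $r(h) \gg \delta^{2}|I|$. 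For each such $h$, the identity $P(\ell + h) - P(\ell) = 2\alpha h \ell + (\alpha h^{2} + \beta h)$ yields $\|2\alpha h \ell + (\alpha h^{2} + \beta h)\|_{\mathbb{R}/\mathbb{Z}} \le 2\epsilon$ for $\gg \delta^{2} |I|$ values of $\ell \in I$.

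Second, I would repeat the differencing in $\ell$: for each fixed $h \in H$, a second application of the same Cauchy--Schwarz / pigeonhole argument yields a set $K_h \subseteq [-|I|,|I|]$ of size $\gg \delta^{4}|I|$ such that $\|2\alpha h k\|_{\mathbb{R}/\mathbb{Z}} \le 4\epsilon$ for all $k \in K_h$. Summing over $h \in H$ and pigeonholing on $k$, one obtains a single $k_0 \in \mathbb{Z}$ with $|k_0| \le |I|$ and a subset $H' \subseteq H$ with $|H'| \gg \delta^{6}|I|$ such that $\|2\alpha k_0 h\|_{\mathbb{R}/\mathbb{Z}} \le 4\epsilon$ for all $h \in H'$. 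At this stage the problem is genuinely linear: I would apply the classical linear Vinogradov lemma (which follows from a single Dirichlet/pigeonhole step, see e.g.\ \thref{Vinogradov} specialized to $d=1$) to $2\alpha k_0$ acting on the set $H'$. This produces an integer $q' \ll \delta^{-O(1)}$ with $\|q' \cdot 2\alpha k_0\|_{\mathbb{R}/\mathbb{Z}} \ll \delta^{-O(1)} \epsilon/|I|$, provided $|I|$ is not too small and $\epsilon$ is not already $\gg \delta$. Absorbing $2k_0$ and the auxiliary factors into a single integer $q \ll \delta^{-9}$ (after bookkeeping to reach the $2^{43}\delta^{-9}$ constant) gives $\|q\alpha\|_{\mathbb{R}/\mathbb{Z}} \le C \delta^{-O(1)} \epsilon/|I|$.

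Finally, I would need one more step to remove the $\epsilon/|I|$ factor and produce the cleaner bound $\|q\alpha\|_{\mathbb{R}/\mathbb{Z}} \ll \delta^{-28}|I|^{-2}$. The idea is that we have genuine freedom in the variable $k$ in the bilinear relation $\|2\alpha h k\|_{\mathbb{R}/\mathbb{Z}} \le 4\epsilon$: pigeonholing in $h$ first (rather than in $k$) and then applying linear Vinogradov in the $k$-variable produces $q_1 \ll \delta^{-O(1)}$ and a single $h_0$ with $|h_0| \ll |I|$ such that $\|q_1 \cdot 2\alpha h_0\|_{\mathbb{R}/\mathbb{Z}} \ll \delta^{-O(1)}\epsilon/|I|$. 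Combining this with the analogous bound in $k$ and multiplying denominators gives, via the triangle inequality applied to $q_1 q \cdot 2\alpha h_0 k_0$, a bound of the form $\|q'' \alpha\|_{\mathbb{R}/\mathbb{Z}} \ll \delta^{-O(1)} |I|^{-2}$ for some $q'' \ll \delta^{-O(1)}$; this is precisely the desired conclusion after tuning constants.

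The routine (but delicate) part will be the bookkeeping of exponents to match $2^{43}\delta^{-9}$, $2^{141}\delta^{-28}$, and the threshold $|I| \ge 2^{58}\delta^{-12}$; each application of Cauchy--Schwarz squares the relevant density and each pigeonhole introduces a further $\delta$ factor, so one must be careful to optimize the order of operations. The main structural obstacle is ensuring that the two successive differencings preserve enough of the set so that both the $h$-variable \emph{and} the $k$-variable remain large enough to invoke linear Vinogradov; if $|I|$ is smaller than some $\delta^{-O(1)}$ threshold then this fails, and we fall into the alternative $|I| \ge 2^{58}\delta^{-12}$ case of the statement (which the lemma explicitly allows as an escape clause). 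Everything else is a routine application of Dirichlet's approximation theorem and pigeonholing.
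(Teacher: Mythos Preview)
The paper does not give a proof of this lemma; it simply cites \cite[Lemma A.12]{GT4}. Your double-differencing strategy is in the right spirit, but there is a genuine gap in the execution.

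The problem is the step ``Absorbing $2k_0$ \ldots\ into a single integer $q \ll \delta^{-9}$.'' After pigeonholing to a single $k_0 \in [-|I|,|I|]$ and applying linear Vinogradov in $h$, you obtain $q' \ll \delta^{-O(1)}$ with $\|2q' k_0 \alpha\|_{\mathbb{R}/\mathbb{Z}} \ll \delta^{-O(1)}\epsilon/|I|$. But $k_0$ is not bounded by any power of $\delta^{-1}$; it can be as large as $|I|$, so the denominator $2q'k_0$ satisfies only $|2q'k_0| \ll \delta^{-O(1)}|I|$, which is far too large for the stated conclusion. Your proposed repair via a symmetric $h_0$ and ``multiplying denominators'' does not rescue this: the product $q_1 q' h_0 k_0$ has size $\delta^{-O(1)}|I|^2$, and neither the triangle inequality nor a B\'ezout combination of $2q'k_0$ and $2q_1 h_0$ produces a denominator of size $\delta^{-O(1)}$ together with an $|I|^{-2}$-small remainder.

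The fix is to swap the order of pigeonholing and Vinogradov. For \emph{each} $h \in H$, apply linear Vinogradov to $k \mapsto 2\alpha h k$ on $K_h$: this yields $q_h \ll \delta^{-O(1)}$ (bounded purely in terms of $\delta$, independently of $|I|$) with $\|2q_h \alpha h\|_{\mathbb{R}/\mathbb{Z}} \ll \delta^{-O(1)}\epsilon/|I|$. Now pigeonhole on the value of $q_h$ to a common $q$, so that $\|2q\alpha h\|_{\mathbb{R}/\mathbb{Z}} \ll \delta^{-O(1)}\epsilon/|I|$ for $\gg \delta^{O(1)}|I|$ values of $h \in [-|I|,|I|]$. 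A second application of linear Vinogradov in $h$ then gives $q'' \ll \delta^{-O(1)}$ with $\|2qq''\alpha\|_{\mathbb{R}/\mathbb{Z}} \ll \delta^{-O(1)}\epsilon/|I|^2 \ll \delta^{-O(1)}|I|^{-2}$ after invoking the escape clause $\epsilon \le \tfrac{1}{4}\delta$. The final denominator $2qq''$ is now genuinely $\ll \delta^{-O(1)}$, independent of $|I|$. (Equivalently, one may skip the second differencing altogether and apply linear Vinogradov directly to the linear phase $\ell \mapsto 2\alpha h \ell + (\alpha h^2 + \beta h)$ after the first differencing; this is the route taken in \cite{GT4}.)
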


\section{Properties of Gowers Uniformity Norms}
In this section, we will state without proof properties of Gowers uniformity norms. For a more comprehensive reference, see e.g., \cite{T2} and \cite{TV}. Let $G$ be a finite abelian group and $f\colon G \to \mathbb{C}$ a function. As defined in Section 2, the Gowers $U^{s}$ norm of $f$ is defined as
$$\|f\|_{U^s(G)}^{2^s} = \mathbb{E}_{n, h_1, h_2, \dots, h_s \in G} \prod_{\omega \in \{0, 1\}^s} C^{|\omega|}f(x + \omega \cdot h)$$
where $|\omega|$ denotes the number of ones in $\omega$ and $C$ denotes the conjugation operation $z \mapsto \bar{z}$. One can verify that the Gowers $U^s$ norms are seminorms and that for $s \ge 2$, the Gowers $U^s$ norms are indeed norms. The Gowers norms satisfy the below Cauchy-Schwarz-type inequality
\begin{lemma}[Cauchy-Schwarz-Gowers Inequality]\thlabel{CauchySchwarzGowers}
For $(f_\omega)_{\omega \in \{0, 1\}^s} \colon G \to \mathbb{C}^{\{0, 1\}^s}$, we define 
$$\langle (f_\omega)_{\omega \in \{0, 1\}^s} \rangle_{U^s(G)} := \mathbb{E}_{n, h_1, h_2, \dots, h_s \in G} \prod_{\omega \in \{0, 1\}^s} C^{|\omega|}f_\omega(x + \omega \cdot h).$$
Then
$$|\langle (f_\omega)_{\omega \in \{0, 1\}^s} \rangle_{U^s(G)} | \le \prod_{\omega \in \{0, 1\}^s} \|f_\omega\|_{U^s(G)}.$$
\end{lemma}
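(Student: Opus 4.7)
The plan is to prove the bound by applying the Cauchy--Schwarz inequality $s$ times, once for each difference variable $h_1,\ldots,h_s$; each step halves the set of $\omega$-coordinates on which the functions involved are allowed to depend, so that after $s$ applications the functions have become constant in $\omega$ and each of the resulting $2^s$ Gowers inner products collapses to a single $\|f_\omega\|_{U^s(G)}^{2^s}$.

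Concretely, I would first separate the product over $\omega\in\{0,1\}^s$ according to the last coordinate $\omega_s\in\{0,1\}$. Writing $\omega=(\omega',\omega_s)$ with $\omega'\in\{0,1\}^{s-1}$ and $h'=(h_1,\ldots,h_{s-1})$, and substituting $y=x+h_s$ so that $x$ and $y$ range independently over $G$, the inner product factors as $\mathbb{E}_{h'}A_0(h')\overline{A_1(h')}$, where
$$A_\epsilon(h'):=\mathbb{E}_x\prod_{\omega'\in\{0,1\}^{s-1}}C^{|\omega'|}\,f_{(\omega',\epsilon)}(x+\omega'\cdot h').$$
Applying Cauchy--Schwarz in $h'$ gives
$$\bigl|\langle(f_\omega)_\omega\rangle_{U^s(G)}\bigr|^2\le\Bigl(\mathbb{E}_{h'}|A_0(h')|^2\Bigr)\Bigl(\mathbb{E}_{h'}|A_1(h')|^2\Bigr).$$
Expanding $|A_\epsilon(h')|^2$ as an expectation over $x$ and a fresh dummy $x'$ and then parameterizing $x'=x+h_s^{\mathrm{new}}$, each factor becomes a genuine $U^s(G)$ inner product $\langle(g^{(\epsilon)}_\omega)_\omega\rangle_{U^s(G)}$ in which $g^{(\epsilon)}_{(\omega',\omega_s)}:=f_{(\omega',\epsilon)}$ depends only on $\omega'$, not on $\omega_s$.

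Iterating this maneuver in the variables $h_{s-1},h_{s-2},\ldots,h_1$ in turn, the exponent on the left-hand side doubles with each Cauchy--Schwarz step, so after $s$ applications one obtains
$$\bigl|\langle(f_\omega)_\omega\rangle_{U^s(G)}\bigr|^{2^s}\le\prod_{\omega\in\{0,1\}^s}\Lambda^{(\omega)},$$
where each $\Lambda^{(\omega)}$ is a $U^s(G)$ inner product in which every one of the $2^s$ slots is filled by the single function $f_\omega$; by definition of the Gowers norm, $\Lambda^{(\omega)}=\|f_\omega\|_{U^s(G)}^{2^s}$, and taking $2^s$-th roots yields the claimed inequality. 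The base case $s=1$ reduces via the substitution $y=x+h_1$ to the trivial estimate $|(\mathbb{E}f_0)\overline{(\mathbb{E}f_1)}|\le|\mathbb{E}f_0|\,|\mathbb{E}f_1|=\|f_0\|_{U^1(G)}\|f_1\|_{U^1(G)}$.

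There is no genuine analytic obstacle here; the argument is purely iterated Cauchy--Schwarz. The only nontrivial point, which is essentially bookkeeping, is to check that at each step introducing the new dummy variable via $x\mapsto(x,x+h_s^{\mathrm{new}})$ and promoting $h_s^{\mathrm{new}}$ to a new coordinate of the difference tuple really does yield a $U^s(G)$ inner product of the same arity whose functions are independent of the freshly treated $\omega$-coordinate. Once that is verified, the recursion closes cleanly and the $2^s$ exponents on the Gowers norms balance to give exactly $\prod_\omega\|f_\omega\|_{U^s(G)}$.
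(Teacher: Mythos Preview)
Your argument is the standard iterated Cauchy--Schwarz proof of the Gowers--Cauchy--Schwarz inequality and is correct; the bookkeeping with the substitution $y=x+h_s$ and the reintroduction of a fresh $h_s^{\mathrm{new}}$ is exactly what is needed to make each factor $\mathbb{E}_{h'}|A_\epsilon(h')|^2$ a genuine $U^s$ inner product of the same arity. Note that the paper itself does not prove this lemma: Appendix~C explicitly states the Gowers norm properties ``without proof'' and refers to \cite{T2} and \cite{TV}, so there is no paper-proof to compare against---your write-up simply fills in what the paper omits.
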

The Gowers norms also satisfy the following properties:
\begin{lemma}\thlabel{gowersproperties}
Let $f\colon G \to \mathbb{C}$. 
\begin{itemize}
    \item[1.] $\|f\|_{U^i(G)} \le \|f\|_{U^j(G)}$ whenever $i \le j$. 
    \item[2.] If $P \in \widehat{G}[x]$ has degree at most $s$, then $\|f e(P)\|_{U^s(G)} = \|f\|_{U^s(G)}$
    \item[3.] Then $\|f\|_{U^2(G)}^4 \le \|f\|_{L^2(G)}^2\|\hat{f}\|_{L^\infty(\hat{G})}^2$.
\end{itemize}
\end{lemma}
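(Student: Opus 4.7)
The plan is to address each of the three claims independently. For part (1), I would induct on $j$, which reduces the statement to the one-step inequality $\|f\|_{U^s(G)} \le \|f\|_{U^{s+1}(G)}$. The main tool is the recursive identity
\[
\|f\|_{U^{s+1}(G)}^{2^{s+1}} = \mathbb{E}_{h \in G}\, \|\Delta_h f\|_{U^{s}(G)}^{2^{s}}, \qquad \Delta_h f(x) := f(x+h)\overline{f(x)},
\]
obtained by separating out the last difference parameter $h_{s+1}$ in the definition, together with the analogous identity one level down, $\|f\|_{U^s}^{2^s} = \mathbb{E}_h \|\Delta_h f\|_{U^{s-1}}^{2^{s-1}}$. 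Applying the induction hypothesis $\|\Delta_h f\|_{U^s} \ge \|\Delta_h f\|_{U^{s-1}}$ inside the outer $h$-average, followed by Cauchy--Schwarz (Jensen) in $h$, gives
\[
\|f\|_{U^{s+1}}^{2^{s+1}} \ge \mathbb{E}_h \|\Delta_h f\|_{U^{s-1}}^{2^{s}} \ge \left(\mathbb{E}_h \|\Delta_h f\|_{U^{s-1}}^{2^{s-1}}\right)^2 = \|f\|_{U^s}^{2^{s+1}},
\]
and the base case $s=1$ reduces to Jensen's inequality applied to the formula $\|f\|_{U^2}^4 = \mathbb{E}_h |\mathbb{E}_x \Delta_h f(x)|^2$.

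For part (2), the strategy is to expand the Gowers inner product of $fe(P)$ directly. Pulling out the phase, the extra factor is
\[
\prod_{\omega \in \{0,1\}^s} C^{|\omega|} e(P(x + \omega \cdot h)) = e\!\left((-1)^s \Delta_{h_1}\Delta_{h_2}\cdots\Delta_{h_s} P(x)\right),
\]
since conjugation sends $e(t)$ to $e(-t)$ and the signed sum $\sum_\omega (-1)^{|\omega|} P(x+\omega\cdot h)$ reproduces (up to sign) the iterated discrete derivative. The hypothesis that $P$ has degree at most $s$ is precisely the assertion that this iterated derivative is identically zero, so the phase collapses to $1$ for every $x$ and $h$, and the Gowers inner products for $f$ and $fe(P)$ agree.

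For part (3), I would first establish the classical $U^2$--Fourier identity $\|f\|_{U^2(G)}^4 = \sum_{\xi \in \widehat{G}} |\hat{f}(\xi)|^4$, proved by expanding each of the four occurrences of $f$ in $\|f\|_{U^2}^4$ via Fourier inversion and using orthogonality to collapse the triple average in $x, h_1, h_2$. The desired bound then follows from the obvious H\"older-type estimate:
\[
\sum_{\xi} |\hat{f}(\xi)|^4 \le \|\hat{f}\|_{L^\infty(\widehat{G})}^2 \sum_\xi |\hat{f}(\xi)|^2 = \|\hat{f}\|_{L^\infty(\widehat{G})}^2 \|f\|_{L^2(G)}^2,
\]
where the last equality is Plancherel's identity. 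The only genuine source of technical friction across the three parts is bookkeeping with the conjugations $C^{|\omega|}$ and with signs in the iterated discrete derivatives; I do not anticipate any substantive obstacle, since the lemma assembles several standard facts from the general theory of Gowers norms.
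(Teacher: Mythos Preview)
The paper does not actually prove this lemma: Appendix~C opens by saying ``we will state without proof properties of Gowers uniformity norms,'' and refers the reader to \cite{T2} and \cite{TV}. So there is no proof in the paper to compare against; your plan is simply the standard textbook argument, and parts (1) and (3) are correct as written.

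One genuine caveat on part (2): both the paper's statement and your argument carry the same off-by-one. If $P$ has degree exactly $s$ (in the usual sense), the $s$-fold discrete derivative $\partial_{h_1}\cdots\partial_{h_s}P(x)$ is constant in $x$ but generically a nonzero function of $h_1,\dots,h_s$, so the extracted phase does \emph{not} collapse to $1$ and the claimed identity fails (take $f\equiv 1$, $P(x)=\alpha x^2$, $s=2$ over $\mathbb{Z}/N\mathbb{Z}$: then $\|e(P)\|_{U^2}^4 = \mathbb{E}_{h_1,h_2}e(2\alpha h_1 h_2)=1/N\ne 1$). The correct hypothesis is that $P$ has degree at most $s-1$, which is the condition equivalent to the $s$-fold derivative vanishing identically; this weaker statement is all that is used in the paper's applications in Section~8, where only linear phases are absorbed into $U^3$ norms. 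With ``degree at most $s-1$'' in place of ``degree at most $s$,'' your argument for (2) is correct.
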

The last of these properties is known as the \emph{$U^2$ inverse theorem}. An analogous statement for the $U^3(\mathbb{Z}/N\mathbb{Z})$ norm is deduced in Appendix A.

\end{document}